\documentclass[final]{siamltex}
\usepackage{amsfonts,amssymb,amsmath} 
\usepackage{booktabs,makecell}
\usepackage{diagbox}
\usepackage{color}
\usepackage{combelow}
\usepackage[breaklinks,colorlinks=true,linkcolor=blue,citecolor=red,backref=page]{hyperref}
\usepackage[framed,numbered,autolinebreaks]{mcode}

\def\proscal<f,g>{\langle\#1,\#2\rangle}
\DeclareMathAlphabet{\itbf}{OML}{cmm}{b}{it}
 \DeclareMathAlphabet\mathbfcal{OMS}{cmsy}{b}{n}

\newcommand{\FF}{\mathbb{F}}
\newcommand{\ZZ}{\mathbb{Z}}
\newcommand{\EE}{\mathbb{E}}
\newcommand{\PP}{\mathbb{P}}
\newcommand{\RR}{\mathbb{R}}

\def\eps{\varepsilon}

\newcommand{\qed}{\hfill $\Box$ \medskip}

\def\bi{{\itbf i}}
\def\bZ{{\itbf Z}}

\def\bz{{\itbf z}}

\def\mus{{\mu_{\rm s}}}
\def\mud{{\mu_{\rm d}}}

\def\boxi{\boldsymbol{\xi}}

\newtheorem{thm}{Theorem}[section]

\newtheorem{prop}[thm]{Proposition}

\newtheorem{remark}[thm]{Remark}

\newcommand{\lm}[1]{{\textcolor{red}{#1}}}

\usepackage[ruled,vlined]{algorithm2e}
\SetKw{KwBy}{by}

\usepackage{color,ulem}
\usepackage{graphicx}%
\usepackage[active]{srcltx}
\usepackage{mathabx}
\usepackage{verbatim}
\usepackage{cancel}
\usepackage{mathtools}
\usepackage{tikz,pgfplots}
\usetikzlibrary{snakes}
\usepackage{cite}
\hypersetup{
  linkcolor  = green!60!black,
  citecolor  = blue!60!black,
  urlcolor   = red!60!black,
  colorlinks = true,
}
 \usepackage{tikz}
 \usetikzlibrary{calc,patterns,
                 decorations.pathmorphing,
                 decorations.markings}
                 
\begin{document}
 
\title{A piecewise deterministic Markov process approach modeling a dry friction problem with noise}

\author{Josselin Garnier\footnotemark[1] \and Ziyu Lu\footnotemark[2]  \and Laurent Mertz\footnotemark[3]}

\maketitle

\author{Josselin
Garnier\thanks{\footnotesize Centre de Math\'ematiques Appliqu\'ees, Ecole Polytechnique, Institut Polytechnique de Paris,
91128 Palaiseau Cedex, France (josselin.garnier@polytechnique.edu)} 
\and Ziyu Lu
\thanks{\footnotesize }
\and Laurent Mertz
\thanks{\footnotesize }
}

\renewcommand{\thefootnote}{\arabic{footnote}}

\begin{abstract}
Understanding and predicting the dynamical properties of systems involving dry friction is a major concern in physics and engineering. It abounds in many mechanical processes, from the sound produced by a violin to the screeching of chalk on a blackboard to human infant crawling dynamics and friction-based locomotion of a multitude of living organisms (snakes, bacteria, scallops) to the displacement of mechanical structures (building, bridges, nuclear plants, massive industrial infrastructures) under earthquakes and beyond. 
Surprisingly, even for low-dimensional systems, the modeling of dry friction in the presence of random forcing has not been elucidated.
In this paper, we propose a piecewise deterministic Markov process approach modeling a system
with dry friction including different coefficients for the static and dynamic forces. 
In this mathematical framework, we derive the corresponding Kolmogorov equations 
to compute statistical quantities of interest related to the distributions of the static (sticked) and dynamic phases. We show ergodicity and provide a representation formula of the stationary measure using independent identically distributed portions of the trajectory (excursions). We also obtain deterministic characterizations of the Laplace transforms of the probability density functions of the durations of the static and dynamic phases. In particular, the analysis of the power spectral density of the velocity reveals a critical value of the noise correlation time below which the correlations of the dynamic behaviors coincide with those of the white noise limit. The existence of such a critical value was already mentioned in the physical literature [Geffert and Just, Phys. Rev. E. {\bf 95} 062111 (2017)].
 \end{abstract}

%

\pagestyle{myheadings}
\thispagestyle{plain}
\markboth{J. Garnier, Z. Lu, and L. Mertz}
{Dry friction problem with noise}

\section{Introduction}
\noindent  
Modeling dry friction is a major concern in physics and engineering. Indeed, it is estimated that $20\%$ of the world's total energy consumption is used to overcome friction \cite{HE2017}. The present work is motivated by the study of the probability distribution of the response of a dry friction model subjected to a certain type of random forces.
To understand the problem, the simplest way is to consider the one-dimensional displacement $U$ of an object (with unit mass) lying on a motionless surface, see Figure~\ref{fig:friction}. The velocity is denoted by $V$ and thus $\dot U = V$. Newton's law implies $\dot V + \FF = b$ where $\FF$ is the force of dry friction and $b$ represents all the other external and internal forces. It is important to emphasize that the force $\FF$ cannot be expressed in terms of a standard function. Below a certain threshold for the applied forces $|b| \leq \mus$ and when $V=0$, the object remains at rest so that we may have $V=0$ in a non-empty time interval (static phase). Otherwise when $|b| > \mus$ or $V\neq 0$ it moves (dynamic phase). Here $\mus>0$ is called static friction coefficient.  \textit{In static phase}, a necessary condition for equilibrium is therefore $\FF = b$. \textit{In dynamic phase}, the force $\FF$ opposes the motion and Coulomb's law implies  $|\FF| = \mud$ where $\mud>0$ is called dynamic friction coefficient. We will assume that $b$ has the form of an internal forcing described by a well behaved function $b(X,V)$, where $X$ is an external forcing that can be random. Here $b$ is real valued but its domain is $\mathbb{R}^{d+1}, d \geq 1$. Indeed $X$ can be multivariate, for instance a $d$-dimensional Ornstein-Uhlenbeck process (see examples in the third section of \cite{cpam}).
In this way, the equation of motion becomes
\begin{equation}
\label{dryfriction}
\dot V + \FF = b(X,V).
\end{equation}
The predictive power of dry friction models that appear in the engineering or physics literature
is generally not supported by a mathematical analysis justifying the well-posedness of the models.
Surprisingly, there is no general mathematical framework for modelling~dry~friction~where~$\mud \leq \mus$.

\begin{figure}[h!]
\begin{center}
\begin{tikzpicture}[scale=0.65]
\coordinate (forcef) at (2.75,0.5);
\coordinate (forceF) at (-2.75,0.5); 
\coordinate (info) at (0,2.5); %
\coordinate (info2) at (0,3.5); %
\draw[dotted,thick] (-4,0) -- (4,0);
\draw[thick] (-1.5,0.05) -- (-1.5,2);
\draw[thick] (1.5,0.05) -- (1.5,2);
\draw[thick] (-1.5,0.05) -- (1.5,0.05);
\draw[thick] (-1.5,2) -- (1.5,2);
\fill[color=gray!50!white] (-1.5,0.05) rectangle (1.5,2);
\draw[->,>=stealth,very thick,black] (1.5,1) -> (4.0,1);
\draw[->,>=stealth,very thick,black] (-1.5,1) -> (-4.0,1);
\node at (forcef) {$b$};
\node at (forceF) {$\FF = \pm \mud$};
\node at (info) {$V \neq 0$ or $V=0, |b| > \mus$ };
\node at (info2) {dynamic phase};
\end{tikzpicture}
\hspace{1cm}
\begin{tikzpicture}[scale=0.65]
\coordinate (forcef) at (2.75,0.5);
\coordinate (forceF) at (-2.75,0.5); 
\coordinate (info) at (0,2.5); %
\coordinate (info2) at (0,3.5); %
\draw[dotted,thick] (-4,0) -- (4,0);
\draw[thick] (-1.5,0.05) -- (-1.5,2);
\draw[thick] (1.5,0.05) -- (1.5,2);
\draw[thick] (-1.5,0.05) -- (1.5,0.05);
\draw[thick] (-1.5,2) -- (1.5,2);
\fill[color=gray!50!white] (-1.5,0.05) rectangle (1.5,2);
\draw[->,>=stealth,very thick,black] (1.5,1) -> (4.0,1);
\draw[->,>=stealth,very thick,black] (-1.5,1) -> (-4.0,1);
\node at (forcef) {$b$};
\node at (forceF) {$\FF = b$};
\node at (info) {$V=0, |b| \leq \mus$ };
\node at (info2) {static phase};
\end{tikzpicture}
\end{center}
\caption{Dynamic (left) and static (right) phases. $\FF$ represents the friction force in response to the applied forces $b$ to the object (shaded area) lying on a motionless surface (dotted line).
\label{fig:friction}}
\end{figure}
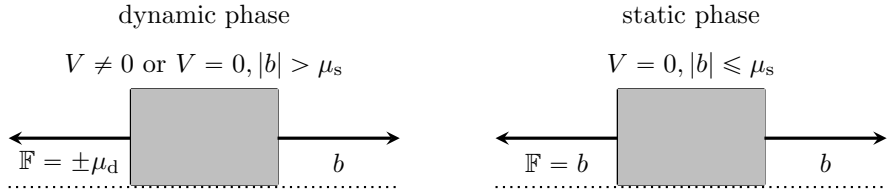
Nonetheless, in some cases, it is possible to justify the well-posedness of the model with an ad-hoc mathematical analysis.
We have in mind the case where $\mud = \mus = \mu$ (in this case we drop the subscript notation ``s" or ``d") and $X$ is a {\color{black} real-valued} deterministic continuous function or $X$ is the continuous solution of a {\color{black} one dimensional} stochastic differential equation. Under such circumstances, the model is well-posed in terms of a differential inclusion (also called multivalued differential equation) \cite{brezis73,marques93,siamStewart}
as follows
\begin{equation}
\label{puredryfriction}
\dot V + \partial \varphi (V) \ni b(X,V) ,
\end{equation}
where {\color{black} $b(x,v) = \mathfrak{b}(v) + x$ with $\mathfrak{b}(v)$ is a Lipschitz function}, $\varphi(v) = \mu |v|$ and $\partial \varphi(v)$ is the subdifferential operator (in the sense of Moreau and Rockafellar \cite{Moreau63,Rockafellar63,HiriartUrruty2001}) 
\begin{equation}
\partial \varphi(v) = 
\left \{
\begin{array}{rcl}
& \{ \mu \textup{sign}(v) \} & \mbox{ if } v \neq 0,\\
& [-\mu,\mu] & \mbox{ if } v = 0.
\end{array}
\right.
\end{equation}
When $X = \dot W$ is a white noise (formal time derivative of a real-valued Wiener process $W$) the framework of stochastic differential inclusions can be used to define the solution \cite{MR3308895}. Numerical techniques to simulate such dry friction systems are proposed in \cite{DontchevLempio92,BAS2013,BER2003}. Moreover, we also have in mind the case where $\mud < \mus$ and $X$ is a continuous function for which the framework of differential inclusions does not allow us to formulate a well-posed problem \cite{BS08}, but an extended variational inequality (EVI) approach can then be used to resolve this issue \cite{BBCMM21}.

\subsection{Review of related literature oriented toward applications}
A model similar to \eqref{dryfriction} 
is studied in \cite{Shaw1986} where the forcing is deterministic and harmonic (sinusoidal external force). The author proposes an exact solution for the dynamic phase. As mentioned by the author, this type of model can be used to describe beating type motions which may occur in turbine blades in the presence of aerodynamical forces. In \cite{BBCMM21}, the EVI framework is rather general as it covers a Lipschitz drift with (at most) linear growth 
and any continuous in time forcing. In particular, it covers \cite{Shaw1986}. The theory is applied to a real structure with real data where the objective is to estimate both the static and dynamic friction coefficients associated with a single bearing point of a bridge. 
The references below discuss models where static friction and dynamic friction coefficients are identical. A slight extension of \eqref{dryfriction} can be used for modeling biolocomotion strategies which are of practical interest in robotics (biomimetism). In \cite{Lauga2013}, the authors consider a system consisting of two bodies at rest on a flat surface and joined by a controllable linkage. The forces are described by Coulomb friction. They demonstrate that friction based locomotion with one degree of freedom is possible. The references below are relevant to soft matter physics and, in contrast with the references above, they involve random forces. In \cite{Goohpattader2009}, the authors investigate experimentally and by simulation the behavior of small objects on a surface subjected to noise and gravity bias. The object velocity is modeled by the same equation as Equation \eqref{puredryfriction} except that $X$ is replaced by a noise of the form $\sqrt{K} \dot W + \bar{\gamma}$, $K>0, \:\bar{\gamma} \in \mathbb{R}$. Their main results are the following. 
They show experimentally and by simulation that the variance of the object displacement grows linearly with time (here the slope is called \textit{diffusivity}) and the stationary average drift velocity can be fitted with a single master curve $\sim \alpha (1+\beta/K)^{-1}$ for some $\alpha,\beta>0$ covering any angle of inclination of the support. Moreover, their experimental study reveals that the diffusivity scales as $\sim K^{1.61}$ which is not too far off from their simulation predicting a scaling $\sim K^{1.74}$. In \cite{Baule2010}, the authors propose a path integral approach to derive analytical expressions for the transition probability of the object’s velocity and the stationary distribution of the work done on the object due to the external force (white noise). From the latter distribution, they obtain a fluctuation relation for the mechanical work fluctuations. In \cite{Goohpattader2010}, the authors investigate experimentally the stochastic behavior of a small solid object on a solid support subject to nonlinear friction when the forces are a combination of a Gaussian white noise and an external constant bias related to gravity. The two models in their paper are written in terms of non smooth Langevin equations. Both equations can be mathematically formulated using differential inclusions. The first equation has the same structure as Equation \eqref{puredryfriction}. However, the difference resides in the random force which is essentially the time derivative of a drifted Brownian motion. Nonetheless, such a dynamics can be obtained from our model when the relaxation time goes to zero. Their second equation can be formulated using a differential inclusion with an oblique sub-gradient \cite{MR3308895}. Inspired by their previous works, they further explore experimentally how rolling of a sphere is affected by Coulomb friction and noise in \cite{Goohpattader2011}. They propose a model which is similar to Equation \eqref{puredryfriction}. The main difference is that the dry friction force is multiplied by a term depending on the noise strength and the velocity. This multiplicative factor models the transition from nonlinear to linear friction. In \cite{Menzel2011}, the formal Fokker Planck equation for both velocity and displacement has been studied. Details on the analysis of the corresponding spectra are reported. In \cite{Touchette2011}, the formal Fokker Planck equation for the velocity has been studied. In \cite{Gnoli2013}, the authors propose a minimal model for a motor where energy is extracted from an equilibrium bath and dissipated only through Coulomb friction. Their model consists of a wheel rotating with an angular velocity around a fixed axis. The wheel is immersed in a fluid and is subject to collision with molecules, viscous drag and Coulomb friction torque. The equation of motion has the same dimension and structure as Equation \eqref{puredryfriction}. The random force is a ``kick" noise which can be seen as the time derivative of a Markov jump process. In \cite{GJ2017}, the closest reference to our present work, the authors consider the case of pure dry friction (\ref{puredryfriction}) with $\mud=\mus=1$ and { \color{black} $\mathfrak{b}(v)=0$} and replace the term $\partial \varphi(v)$ by a smoother term $\sigma_{\epsilon}(v) = \tanh({v}/{\epsilon})$. In this way, they investigate the equation $\dot{V} + \sigma_{\epsilon}(V)  = X$, where $X$ is a Gaussian process with mean zero and covariance function $\EE[X(t)X(s)] = \frac{1}{2\tau}\exp(- \frac{|t-s|}{\tau})$. Then, they apply the unified colored noise approximation (UCNA), previously developed by Jung and H\"{a}nggi \cite{JH1987}, to obtain an approximate expression 
 of the stationary probability density function (pdf) of the process $V$ for any fixed $\epsilon>0$. They then take a formal limit as $\epsilon \to 0$ to obtain a formula for  the probability of sticking (the mass of the singular part of the pdf at $V=0$). This analytic approximation works rather well for small values of $\tau$, but fails for values of order one. It provides, however, valuable insights into the underlying stochastic dynamics. The approach that we propose in this paper is different and has more rigorous theoretical foundations.

\subsection{Our contribution: A piecewise deterministic Markov process approach}
In this paper, we propose a piecewise deterministic Markov process (PDMP) approach to model dry friction as informally presented in \eqref{dryfriction}. 
We consider the case where $X$ takes real values, for higher dimensions the idea remains the same but it is not discussed in this manuscript.
In this approach 1) the external forcing $X$ takes discrete values and it is assumed to be a Markov jump process; 2) given the step-wise constant trajectory $X$, the velocity $V$ satisfies (\ref{dryfriction}).
In this way, the process satisfies a well-posed problem. In this regard, we obtain a solid mathematical framework for deriving the Kolmogorov equations, shown in section \ref{pdmpkolmogorov}, and related tools to compute statistical quantities of interest. 

In the case where $\mud = \mus =\mu$, we show in Proposition \ref{prop:1} that the aforementioned process converges in distribution towards the solution of the differential inclusion  \eqref{puredryfriction} driven by the continuous solution of a stochastic differential equation as the step size in $X$ goes to $0$.

The introduction of the PDMP framework makes it possible to obtain relevant results about the dry friction problem with noise. We obtain the general representation formulas (\ref{eq:represnu}) and (\ref{eq:rep_excursion}) for the stationary distribution of the dry friction process.
The first one makes it possible to compute relevant quantities by solving Kolmogorov equations, while the second one makes it possible to estimate the same quantities by an efficient Monte Carlo method.
We compute dynamical properties in Section \ref{sec:dyn}, such as the power spectral density of the velocity and the distributions of the durations of the sticking and sliding periods.

\section{A semi-discrete Markov process approach model for dry friction}
\label{pdmpkolmogorov}
Ideally we would like to consider an external forcing that is a colored noise $X_t$, that is itself solution of a stochastic differential equation 
\begin{equation}
\label{noiselimit}
\dot X = - {\tau}^{-1}X + \sqrt{2}{\tau}^{-1} \dot W
\end{equation}
where $\dot W_t$ is a white noise and $\tau>0$ is the noise correlation time.
The infinitesimal generator $Q$ of the continuous Markov process $X_t$ has the form
\begin{equation}
\forall f \: \mbox{twice differentiable function}, 
\: Q f =  {\tau}^{-2} f'' - \tau^{-1}xf'. 
\end{equation}
The process $X_t$ is stationary and ergodic and 
its invariant probability distribution is the normal distribution with density $g(x) = \sqrt{\tau}  {e^{-{\tau x^2}/{2}}}/{\sqrt{2 \pi}}$. As $\tau \to 0$, $X$ behaves like $\sqrt{2} \dot{W}$.

In this section, we propose an approximation of $X$ by a pure jump process $X^\delta$ where $\delta >0$ is a small number.
The state space of $X^\delta$ is denoted by $S^\delta = \delta \ZZ \cap [ - L_X^\delta, L_X^\delta]$, with $L_X^\delta \to +\infty$ and $\delta L_X^\delta \to 0$ as $\delta \to 0$. Thus, for any $\delta>0$, it is a finite set of equally $\delta$-spaced points denoted by $\{x_{-N},\ldots,x_N\}$ with $x_{ \pm N} = \pm L_X^\delta$.
We denote the cardinality of $S^\delta$ by $2N+1$, $N=[L_X^\delta \delta^{-1}]$. We denote by $k_{\mus}$ the index such that $x_{k_{\mus}} \leq \mus$ and $x_{k_{\mus}+1} > \mus$. We also have $\{ x_{i}, \: | i | \leq k_\mus \} \subset [-\mus,\mus]$ and if $|i| > k_{\mus}$ then $|x_i|> \mus$. The process $X^\delta$ is a jump Markov process with the infinitesimal generator 
\begin{equation}
\label{def:Qdelta}
Q^\delta f(x) = 2\tau^{-2} \delta^{-2} \left ( \alpha(x) f(x+\delta) - f(x) + (1-\alpha(x)) f(x-\delta) \right ),
\end{equation}
where (assuming $\tau \delta L_X^\delta <2$)
\begin{equation}
\alpha(x) =\left\{
\begin{array}{ll}
\frac{1}{2} \big(1 -\frac{\tau \delta}{2}\big)  & \mbox{ if } |x| < N \delta ,\\
0 &\mbox{ if }  x =N\delta,\\
1 &\mbox{ if }  x =-N\delta.
\end{array}
\right.
\end{equation}
In this context, replacing $X_t$ by $X^\delta_t$, the pure (i.e. when $\mus = \mud = \mu$ and  $\varphi (v) = \mu |v|$) dry friction model is replaced by
\begin{equation}
\label{dryfrictiondelta}
\dot V^\delta + \partial \varphi (V^\delta) \ni { \color{black} \mathfrak{b}(V^\delta) + X^\delta}.
\end{equation}
A motion illustration is shown in Figure \ref{fig:motion}.
	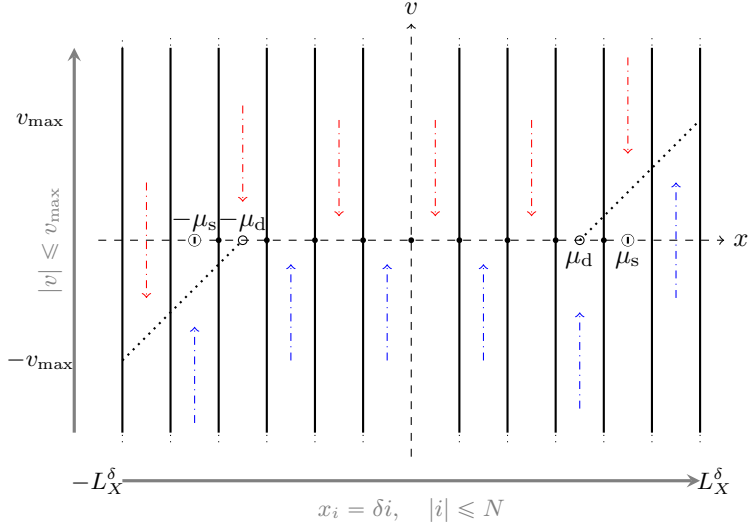
\begin{figure}[h!]
		\centering
		\begin{tikzpicture}[scale=0.64]
		\draw[->,dashed] (-6.5,0) -- (6.5,0) node[right] {$x$};
		\draw[->,dashed] (0,-4.5) -- (0,4.5) node[above] {$v$};
		
		\draw[-,black, thick] (-6,-4) -- (-6,4);
		\draw[-,black, dotted] (-6,-4.2) -- (-6,-4);
		\draw[-,black, dotted] (-6,4) -- (-6,4.2);
		
		\draw[-,black, thick] (-5,-4) -- (-5,4);
		\draw[-,black, dotted] (-5,-4.2) -- (-5,-4);
		\draw[-,black, dotted] (-5,4.2) -- (-5,4);
		
		\draw[-,black, thick] (-4,-4) -- (-4,4);
		\draw[-,black, dotted] (-4,-4.2) -- (-4,-4);
		\draw[-,black, dotted] (-4,4.2) -- (-4,4);
		
		\draw[-,black, thick] (-3,-4) -- (-3,4);
		\draw[-,black, dotted] (-3,-4.2) -- (-3,-4);
		\draw[-,black, dotted] (-3,4.2) -- (-3,4);
		
		\draw[-,black, thick] (-2,-4) -- (-2,4);
		\draw[-,black, dotted] (-2,-4.2) -- (-2,-4);
		\draw[-,black, dotted] (-2,4.2) -- (-2,4);
		
		\draw[-,black, thick] (-1,-4) -- (-1,4);
		\draw[-,black, dotted] (-1,-4.2) -- (-1,-4);
		\draw[-,black, dotted] (-1,4.2) -- (-1,4);
		
			
		\draw[-,black, thick] (6,-4) -- (6,4);
		\draw[-,black, dotted] (6,-4.2) -- (6,-4);
		\draw[-,black, dotted] (6,4.2) -- (6,4);
		
		\draw[-,black, thick] (5,-4) -- (5,4);
		\draw[-,black, dotted] (5,-4.2) -- (5,-4);
		\draw[-,black, dotted] (5,4.2) -- (5,4);
		
		\draw[-,black, thick] (4,-4) -- (4,4);
		\draw[-,black, dotted] (4,-4.2) -- (4,-4);
		\draw[-,black, dotted] (4,4.2) -- (4,4);
		
		\draw[-,black, thick] (3,-4) -- (3,4);
		\draw[-,black, dotted] (3,-4.2) -- (3,-4);
		\draw[-,black, dotted] (3,4.2) -- (3,4);
		
		\draw[-,black, thick] (2,-4) -- (2,4);
		\draw[-,black, dotted] (2,-4.2) -- (2,-4);
		\draw[-,black, dotted] (2,4.2) -- (2,4);
		
		\draw[-,black, thick] (1,-4) -- (1,4);	
		\draw[-,black, dotted] (1,-4.2) -- (1,-4);
		\draw[-,black, dotted] (1,4.2) -- (1,4);	
		 
		

		\foreach \x in {-4,...,4}
		\foreach \y in {0}{
			\fill[black] (\x,\y) circle (0.06cm);
		}
	
	\node at (-7.7,-2.5) {\small $-{v}_{\rm max}$};
	\node at (-7.7,2.5) {\small ${v}_{\rm max}$};
	\draw[->,>=stealth,very thick,gray] (-7,-4) -> (-7,4);
	\node[rotate=90] at (-7.5,0) {\small\textcolor{gray}{ $|v| \leq {v}_{\rm max}$}};
	
	\node at (-6.5,-5) {\small $-L_X^\delta$};
	\node at (6.3,-5) {\small $L_X^\delta$};
	\draw[->,>=stealth,very thick,gray] (-6,-5.) -> (6,-5.);
	\node at (0,-5.6) {\small\textcolor{gray}{$x_i = \delta i, \quad |i| \le N$}};
		
		
		\draw[dotted, thick, scale=1,domain=3.5:6,smooth,variable=\x,black] plot ({\x},{-3.5+\x});
		\draw[dotted, thick, scale=1,domain=-6:-3.5,smooth,variable=\x,black]  plot ({\x},{3.5+\x});
		
		\node at (3.5,-0.4) {$\mud$};
		
                 \draw[black] (3.5,0) circle (0.09cm);
                 \draw[black] (-3.5,0) circle (0.09cm);
                 
                 \draw[black] (4.5,0) circle (0.12cm);
                 \fill[white] (4.5,0) circle (0.12cm);
                 \draw[black] (-4.5,0) circle (0.12cm);
                 \fill[white] (-4.5,0) circle (0.12cm);    		
		
		\node at (-3.5,0.4) {$-\mud$};
		\draw[thick,black] (-3.,-0.07) -- (-3.,0.07);
		\node at (-4.5,0.4) {$-\mus$};
		\draw[thick,black] (-4.5,-0.07) -- (-4.5,0.07);
		
		\node at (4.5,-0.4) {$\mus$};
		\draw[thick,black] (4.5,-0.07) -- (4.5,0.07);
		


		\draw[->,blue, dashdotted] (5.5,-1.2) -- (5.5,1.2);		
		\draw[->,blue, dashdotted] (-2.5,-2.5) -- (-2.5,-0.5);
		\draw[->,blue, dashdotted] (-0.5,-2.5) -- (-0.5,-0.5);
		\draw[->,blue, dashdotted] (1.5,-2.5) -- (1.5,-0.5);
		\draw[->,blue, dashdotted] (3.5,-3.5) -- (3.5,-1.5);

		\draw[->,blue, dashdotted] (-4.5,-3.8) -- (-4.5,-1.8);	
		\draw[->,red, dashdotted] (-5.5,1.2) -- (-5.5,-1.2);

		\draw[->,red, dashdotted] (2.5,2.5) -- (2.5,0.5);
		\draw[->,red, dashdotted] (0.5,2.5) -- (0.5,0.5);
		\draw[->,red, dashdotted] (-1.5,2.5) -- (-1.5,0.5);
		\draw[->,red, dashdotted] (-3.5,2.8) -- (-3.5,0.8);

		\draw[->,red, dashdotted] (4.5,3.8) -- (4.5,1.8);
		
		\end{tikzpicture}
		\caption{Moving directions of $(X^\delta,V^\delta)$ when $x_N>\mus > \mud$ and { \color{black} $\mathfrak{b}(v)=-v$} (so that $v_\textup{max} = x_N-\mud$). \textbf{Dynamic phase:} away from the black points $\{(x_i,0), i=-k_\mus,\ldots, k_\mus\}$, $(X^\delta,V^\delta)$ can move continuously upward and downward respectively and by jumps along the $x$-axis. \textbf{Static phase:}  at the black points $\{(x_i,0), i=-k_\mus,\ldots, k_\mus\}$, $(X^\delta,V^\delta)$ moves only by jumps along the $x$-axis.
		 \label{fig:motion}}
	\end{figure}
It is worth mentioning that Equation \eqref{dryfrictiondelta} does not cover the case $\mus > \mud$. In the latter, the formulation of the dynamics does not involve any subdifferential operator. See Remark \ref{newremark1}.

The process $(X^\delta_t,V^\delta_t)$ is well defined as a c\`adl\`ag (right continuous with left limits \cite{bill}) process on $S^\delta \times \RR$.
It is also possible to interpret the semi-discrete Markov process $(X^\delta_t,V^\delta_t)$ in terms of a Piecewise Deterministic Markov Process (PDMP) and this is the main idea of this paper.
The theory developed for PDMPs then makes it possible to write Kolmogorov equations and use dedicated tools and results.
We introduce the process $\bZ^\delta_t= (X^\delta_t, Y^\delta_t,V^\delta_t)$,
where $(X^\delta_t,V^\delta_t)$ is the process defined here above by (\ref{def:Qdelta}-\ref{dryfrictiondelta}) and we have added the marker $Y^\delta_t = \Theta(X^\delta_t,V^\delta_t)$, with 
\begin{equation}
 \Theta(x,v) = 
\left\{
\begin{array}{ll}
1 & \mbox{ if }v>0 \mbox{ or if } v=0,  \, {\color{black} \mathfrak{b}(0) +} x > \mus,\\ -1& \mbox{ if }v<0 \mbox{ or if } v=0, \, {\color{black} \mathfrak{b}(0) +} x < -\mus,\\ 0 &\mbox{ if }v=0, \, {\color{black} \mathfrak{b}(0) +} x \in [-\mus,\mus].
\end{array}
\right.
\end{equation}
Then $(X^\delta_t,Y^\delta_t)$ is a jump Markov process which takes values in the finite space $S^\delta \times \{-1,0,1\}$ and which has c\`adl\`ag trajectories,
$V^\delta_t$ is a real-valued continuous process, and $(X^\delta_t, Y^\delta_t,V^\delta_t)$ is a Markov process, more exactly a PDMP, whose infinitesimal generator is given below.
The introduction of the marker $Y^\delta_t$ makes it possible to adopt the formalism of PDMPs, with
smooth flows for the continuous process $V^\delta_t$ and jumps of the mode $(X^\delta_t,Y^\delta_t)$ that occur at random times (when $X^\delta_t$ jumps) and at deterministic times when the process hits the boundaries of the state space described below (when $V^\delta_t$ reaches $0$ the dynamics for $V^\delta_t$ changes).

When $\mud=\mus=\mu$ we can establish the connection between this semi-discrete Markov process and the continuous process solution of \eqref{noiselimit}-\eqref{puredryfriction}.
Such a result in the case $\mus>\mud$ is beyond the scope of this paper as the limit system is not clear in this case.  
\begin{prop}
\label{prop:1}
If $\mud=\mus=\mu$ then the random processes $(X^\delta_t,V^\delta_t)$ converge in distribution in the space of the c\`adl\`ag functions to the Markov process $(X_t,V_t)$ which is solution of \eqref{noiselimit}-\eqref{puredryfriction}.
\end{prop}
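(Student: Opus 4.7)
The plan is to combine two ingredients: convergence of the noise process $\eta^\delta$ to the Ornstein--Uhlenbeck limit $\eta$, and continuity of the solution map of \eqref{puredryfriction} with respect to its driver. Both would then be assembled via the Skorokhod representation theorem to deliver joint convergence of $(\eta^\delta,v^\delta)$.

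For the first ingredient, I would invoke the standard convergence result for Markov processes in terms of their generators (e.g., Ethier--Kurtz, Theorem 4.8.2). For a test function $f\in C^\infty_c(\RR)$, a Taylor expansion of $f(\eta\pm\delta)$ combined with the choice $\alpha(\eta)=\tfrac{1}{2}(1-\eta\delta/2)$ in \eqref{def:Qdelta} gives
\begin{equation*}
Q^\delta f(\eta)=\frac{2}{\tau}\left[\frac{(2\alpha(\eta)-1)f'(\eta)}{\delta}+\frac{f''(\eta)}{2}\right]+O(\delta)=\tau^{-1}\bigl(f''(\eta)-\eta\,f'(\eta)\bigr)+O(\delta),
\end{equation*}
uniformly on compact subsets of $\RR$, with error controlled by $\|f'''\|_\infty$. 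Provided $L_\eta^\delta\to\infty$ sufficiently fast, the boundary corrections at $\pm N\delta$ are negligible given the Gaussian tail of the invariant law. Combined with tightness of $\eta^\delta$ (via the Lyapunov function $\eta\mapsto\eta^2$, for which one computes $Q^\delta(\eta^2)=2\tau^{-1}(1-\eta^2)$), this yields $\eta^\delta\Rightarrow\eta$ in the Skorokhod space $D([0,\infty),\RR)$.

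For the second ingredient, I would exploit the monotonicity of $\partial\varphi$ as the subdifferential of the convex function $v\mapsto\mu|v|$: for any $w_i\in\partial\varphi(v_i)$ one has $(w_1-w_2)(v_1-v_2)\geq 0$. Writing two solutions $v_1,v_2$ of \eqref{puredryfriction} with drivers $\eta_1,\eta_2$ and $b(\eta,v)=\eta-v$, this yields
\begin{equation*}
\tfrac{1}{2}\tfrac{d}{dt}|v_1-v_2|^2\leq -|v_1-v_2|^2+(v_1-v_2)(\eta_1-\eta_2),
\end{equation*}
and Gronwall's lemma then gives a bound of the form $\sup_{s\in[0,T]}|v_1(s)-v_2(s)|\leq C_T\bigl(|v_1(0)-v_2(0)|+\sup_{s\in[0,T]}|\eta_1(s)-\eta_2(s)|\bigr)$. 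This shows that the driver-to-solution map is Lipschitz in uniform norm. By the Skorokhod representation theorem, since the limit $\eta$ is continuous, weak convergence in the $J_1$ topology can be lifted to almost sure local uniform convergence for suitable copies $\tilde\eta^\delta\to\tilde\eta$. The stability estimate then transfers to $\tilde v^\delta\to\tilde v$ uniformly on compacts almost surely, giving joint weak convergence $(\eta^\delta,v^\delta)\Rightarrow(\eta,v)$ in $D([0,\infty),\RR^2)$.

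The main technical obstacle will be the rigorous justification of the stability inequality in the càdlàg setting: the driver $\eta^\delta$ is piecewise constant with jumps, whereas $v^\delta$ remains continuous, and the jumps of $\eta^\delta$ alter only the right derivative of $v^\delta$. This is handled by integrating between consecutive jump times of $\eta^\delta$, applying the chain rule for absolutely continuous $v$-functions on each such interval, and noting that no boundary contribution arises at jump times because $v^\delta$ itself is continuous there. A secondary issue is the hard reflecting boundary of $S^\delta$ at $\pm L_\eta^\delta$, to be treated by a localization argument exploiting the growth of $L_\eta^\delta$ against the Gaussian tails of $\eta$.
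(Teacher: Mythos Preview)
Your argument is correct and follows the same two-ingredient architecture as the paper (generator convergence for $\eta^\delta\Rightarrow\eta$, then stability of the solution map), but your treatment of the second ingredient is genuinely different. The paper establishes continuity of the driver-to-solution map $w\mapsto v(w)$ from $(D[0,T],J_1)$ to $C[0,T]$ via Moreau--Yosida regularisation: it first proves continuity of the penalised ODE map using an $L^1$-type estimate $\sup_t|x_t(w)-x_t(w_n)|\le e^{L_bT}\int_0^T|w-w_n|\,\textup{d}s$ together with $\int_0^T|w-w_n|\to 0$ under $J_1$ convergence, and then passes to the inclusion by a uniform-in-driver rate $\sup_t|v^p(w)-v(w)|\le\sqrt{C_T/p}$ and an $\varepsilon/3$ argument. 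You instead exploit the monotonicity of $\partial\varphi$ directly to obtain a sup-norm Lipschitz estimate, then invoke Skorokhod representation and the fact that the limit $\eta$ has continuous paths to upgrade $J_1$ convergence to locally uniform convergence. Your route is shorter and avoids the regularisation machinery entirely; the paper's route buys a slightly stronger statement (continuity of the solution map in $J_1$ without assuming a continuous limit), though that extra generality is not needed here since the Ornstein--Uhlenbeck limit is continuous.
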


\begin{proof}
This proposition can be proved in two steps:
one first shows that $(X^\delta_t)$ converges to $(X_t)$ as $\delta \to 0$ by standard diffusion approximation theory, and then one shows that the mapping from $(X^\delta_t)$ to $(V^\delta_t)$ through (\ref{dryfrictiondelta}) is continuous.
The detailed proof is in Appendix \ref{app:prop1}.
\end{proof}
\begin{remark}
\label{newremark1}
An essential ingredient of the proof is the continuity of the mapping $X^\delta \to V^\delta$ through (2.5). 
When $\mud < \mus$, the way to define the mapping does not rely on a monotone maximal multivalued operator \cite{brezis73} (replacing $\partial \varphi$). Such a mapping can be defined using an EVI approach \cite{BBCMM21}, however it is not continuous in general (shown below).  This explains why convergence of the system holds only for $\mud = \mus$. In the EVI framework, for any continuous function $x(\cdot)$, the mapping produces a function $v(\cdot)$ for which the phases $\dot v \pm \mud \text{sign}(v) = {\color{black} \mathfrak{b}(v) + x}$ and ${\color{black}|\mathfrak{b}(0) + x|} \leq \mus$ occur when $\pm v>0$ and $v=0$, respectively, on non-empty time intervals.
To see that such a mapping $x \mapsto v(x)$ is not continuous in general, 
we consider for instance $b(x,v) = x$ and $v(0)=0$. If $\forall t \geq 0, x(t) = \mus$ then $\forall t \geq 0, v(t) = 0$ whereas if $\forall t \geq 0, x^\epsilon(t) = \mus + (\epsilon - t)\mathbf{1}_{t \in [0,\epsilon]}$ then 
$\forall t \geq 0, v^\epsilon(t) = (\mus-\mud) t + \epsilon (\epsilon \wedge t) \left ( 1- \frac{t \wedge \epsilon}{2} \right ) $.
Therefore it is clear that $\lim \limits_{\epsilon \to 0} \| x - x^\epsilon \| = 0$ but $\lim \limits_{\epsilon \to 0} \| v - v^\epsilon \| = (\mus-\mud)T$ where $\| \cdot \|$ is the max norm on $[0,T]$.\\ 
\end{remark}
The state space of the process $\bZ^\delta$ is
\begin{equation}
E = \bigcup_{ (x,y) \in 
\mathbb{S}^\delta 
} E_{x,y}, \quad \quad E_{x,y}= \{(x,y) \}\times H_{x,y},
\end{equation}
where 
$\mathbb{S}^\delta = \{ x_{-N},\ldots ,x_{-k_\mus-1}\} \times \{-1,1\} \cup \{ x_{-k_\mus},\ldots ,x_{k_\mus}\} \times \{-1,0,1\} \cup \{ x_{k_\mus+1},\ldots ,x_{N}\} \times \{-1,1\}$,
$H_{x,y} = (-\infty,0)$ if $(x,y) \in
 \{ x_{-k_{\mus}} ,\ldots, x_N \} \times \{-1\}$,
$H_{x,y}=(0,+\infty)$ if $(x,y) \in \{ x_{-N},\ldots, x_{k_{\mus}} \} \times \{1\}$,
and $H_{x,y}=\RR$ otherwise.
Let ${\mathcal E}$ denote the class of measurable sets in $E$:
\begin{equation}
{\mathcal E} = \sigma \big( A_{x,y} , \, A_{x,y} \in {\mathcal E}_{x,y}, (x,y) \in 
\mathbb{S}^\delta 
\big) ,
\end{equation}
where ${\mathcal E}_{x,y}$ denotes the Borel sets of $E_{x,y}$.

The Markov evolution of $\bZ^\delta$ is determined by the following objects:\\
- the real-valued and smooth vectors fields $B(\bz)$,
$\bz= (x,y,v)$, given by
\begin{equation}
B(x,-1,v) = \mud+{\color{black} \mathfrak{b}(v) + x },\quad \quad
B(x,0,v) = 0 ,\quad \quad 
B(x,1,v) = -\mud+{\color{black} \mathfrak{b}(v) + x },
\end{equation}
- the function {\color{black} $v \mapsto \mathfrak{b}(v)$} is Lipschitz continuous and satisfies {\color{black}$\mathfrak{b}(-v)=-\mathfrak{b}(v)$}; {\color{black} in particular $\mathfrak{b}(0) = 0$;} the function $v \in [0,+\infty) \mapsto {\color{black} \mathfrak{b}(v)}$ is decreasing from {\color{black} $0$} to $-\infty$ (we may think for instance that $\mathfrak{b}(v)=-v$),\\
- the constant rate function $\Lambda= 2 \tau^{-2} \delta^{-2}$,\\
- the probability transition measure ${\mathcal Q} : {\mathcal E} \times \overline{E}  \to [0,1]$ is discrete because $V^\delta$ does not jump:

${\cal Q}f(x,y,v) = \sum_{(x',y') \in 
\mathbb{S}^\delta 
}
{\cal Q}\big( (x',y',v);(x,y,v)\big) f(x',y',v)$  and it is given by
\begin{subequations}
\label{eq:defcalQ}
    \begin{align}
&
\forall y \in \{-1,1\}, \: 
\forall x \in \{ x_{-N}, \ldots, x_{-k_{\mus}-1} \}, \:
 {\mathcal Q} \big(  (x,-1,0) ; (x,y,0) ) =1,\\
&
\forall y \in \{-1,1\}, \: 
\forall x \in \{ x_{k_{\mus}+1}, \ldots, x_N \}, \:
 {\mathcal Q} \big(  (x,1,0) ; (x,y,0) ) =1,\\
&
\forall y \in \{-1,1\}, \: 
\forall x \in \{ x_{-k_{\mus}}, \ldots, x_{k_{\mus}} \}, \:
{\mathcal Q} \big(  (x,0,0) ; (x,y,0) ) =1,\\[2mm]
&
\forall {\color{black}x} \in \{ {\color{black}x}_{-k_{\mus}+1}, \ldots, {\color{black}x}_{k_{\mus}-1} \}, \:
{\mathcal Q} \big(  ({\color{black}x}+\delta,0,0) ; ({\color{black}x},0,0) ) = \alpha({\color{black}x}),\\
&
\forall {\color{black}x} \in \{ {\color{black}x}_{-k_{\mus}+1}, \ldots, {\color{black}x}_{k_{\mus}-1} \}, \:
{\mathcal Q} \big(  ({\color{black}x}-\delta,0,0) ; ({\color{black}x},0,0) ) = 1-\alpha({\color{black}x}),\\[2mm]
&
{\mathcal Q} \big( ({\color{black}x}_{-k_{\mus}+1},0,0) ; ({\color{black}x}_{-k_{\mus}},0,0) ) =\alpha({\color{black}x}),    \\
&
{\mathcal Q} \big( ({\color{black}x}_{k_{\mus}-1},0,0) ; ({\color{black}x}_{k_{\mus}},0,0) ) =1-\alpha({\color{black}x}),    \\
&
{\mathcal Q} \big( ({\color{black}x}_{-k_{\mus}-1},-1,0) ; ({\color{black}x}_{-k_{\mus}},0,0) ) =1-\alpha({\color{black}x}) , \\
&
{\mathcal Q} \big( ({\color{black}x}_{k_{\mus}+1},1,0) ; ({\color{black}x}_{k_{\mus}},0,0) ) =\alpha({\color{black}x}),    \\
&
\forall ({\color{black}x},{\color{black}y},v) \in 
E,   \: v \neq 0, \:
{\mathcal Q} \big( ({\color{black}x}+\delta,{\color{black}y},v) ; ({\color{black}x},{\color{black}y},v) ) =\alpha({\color{black}x}),\\
&
 \forall ({\color{black}x},{\color{black}y},v) \in 
E,  \:v \neq 0, \: 
{\mathcal Q} \big( ({\color{black}x}-\delta,{\color{black}y},v) ; ({\color{black}x},{\color{black}y},v) ) =1-\alpha({\color{black}x}) .\end{align}
\end{subequations}

In Eq.~(\ref{eq:defcalQ}):\\
- The first three lines $(a$-$c)$ describe the jumps of the modes when the process $\bZ^\delta$ reaches the boundaries of the domain $\partial E$:
\begin{equation}
\partial E =  \{ {\color{black}x}_{-k_{\mus}},\ldots, {\color{black}x}_N \} \times \{-1\} \times \{0\} \bigcup
\{ {\color{black}x}_{-N},\ldots, {\color{black}x}_{k_{\mus}} \} \times \{1\} \times \{0\}.
\end{equation}
When the process $\bZ^\delta$ reaches $({\color{black}x}_k, -1,0)$ for $k \in \{ k_{\mus}+1, \ldots , N\}$,
it jumps to $({\color{black}x}_k,1,0)$.
When the process $\bZ^\delta$ reaches $({\color{black}x}_k, -1,0)$ for $k \in \{ -k_{\mus}, \ldots, k_{\mus} \}$, it jumps to $({\color{black}x}_k,0,0)$.
These jumps represent the transitions from the dynamic phase with negative velocity (mode ${\color{black}y}=-1$) to the dynamic phase with positive velocity (mode ${\color{black}y}=1$) and to the static phase (mode ${\color{black}y}=0$). Similarly, when the process $\bZ^\delta$ reaches $({\color{black}x}_k, 1,0)$ for $k \in \{ -N, \ldots ,-k_{\mus}-1 \}$,
it jumps to $({\color{black}x}_k,-1,0)$.
When the process $\bZ^\delta$ reaches $({\color{black}x}_k,1,0)$ for $k \in \{ -k_{\mus}, \ldots, k_{\mus} \}$, it jumps to $({\color{black}x}_k,0,0)$.
These jumps represent the transitions from the dynamic phase with positive velocity (mode ${\color{black}y}=1$) to the dynamic phase with negative velocity (mode ${\color{black}y}=-1$) and to the static phase (mode ${\color{black}y}=0$). \\
- The following lines $(d$-$k)$
describe the jumps of the modes that are triggered by the random clock of the driving noise {\color{black}$X^\delta_t$}.
The lines $(d$-$g)$ describe the jumps from the static phase to itself, the lines $(h$-$i)$ describe the jumps from the static phase to the dynamic phase and the lines
$(j$-$k)$ describe the jumps from the dynamic phase to itself.
Note in particular that lines $(h$-$i)$ describe how the process at the border of the static domain at $({\color{black}x}_{\pm k_\mus},0,0)$ can escape the static domain by a jump of {\color{black}$X_t^\delta$} which allows the process to pull itself out of the sticked phase.

We denote $v_\textup{max}= \inf \{ v \geq 0,\, { \color{black} x_N + \mathfrak{b}(v)} - \mud \leq 0\}$. 
We denote by $\Phi_{{\color{black}x},{\color{black}y}}(t,v)$ the flow solution of 
\begin{equation}
\partial_t \Phi_{{\color{black}x},{\color{black}y}}(t,v) = B( {\color{black}x},{\color{black}y},\Phi_{{\color{black}x},{\color{black}y}}(t,v)),
\quad\quad
\Phi_{{\color{black}x},{\color{black}y}}(t=0,v)=v.
\end{equation}
For $\bz = ({\color{black}x},{\color{black}y},v)\in E$, we denote by $T^*(\bz)$ the hitting time of the boundary $\partial {\color{black}H}_{{\color{black}x},{\color{black}y}}$ by $\Phi_{{\color{black}x},{\color{black}y}}(t,v)$.
{\color{black} If ${\color{black}H}_{{\color{black}x},{\color{black}y}} = \RR$ then $T^*(\bz)=+\infty$; otherwise, $\partial {\color{black}H}_{{\color{black}x},{\color{black}y}} =\{0\}$ and this happens only if $\bz \in \{x_{-k_\mus},\ldots,x_N\}\times \{-1\} \times (-\infty,0) \cup \{ x_{-N},\ldots,x_{k_\mus}\} \times \{1\} \times(0,+\infty)$. For such a $\bz$:}
\begin{equation}
T^*(\bz) = 
\inf \big\{ t>0 , \Phi_{{\color{black}x},{\color{black}y}}(t,v) =0 \big\} ,
\end{equation}
with the convention $\inf \emptyset =+\infty$.
If {\color{black} $\mathfrak{b}(v)=-v$}, then the flow $\Phi_{{\color{black}x},{\color{black}y}}(t,v)$ has an explicit expression and we have: 
$$
T^*(\bz) = 
\left\{
\begin{array}{ll}
 -\log \frac{{\color{black}x}-\mud}{{\color{black}x} - \mud-v} 
&
\mbox{ if } v>0, \, {\color{black}x}-\mud<0, \\
 -\log \frac{{\color{black}x}+\mud}{{\color{black}x} + \mud-v} 
 &
\mbox{ if } v<0,  \, {\color{black}x}+\mud>0 , \\
+\infty&
\mbox{ otherwise}.
\end{array}
\right.
$$
For any $\bz \in E$, we define the survivor function $F_\bz$:
\begin{equation}
F_\bz(t) = 
{\bf 1}_{(-\infty,0)}(t)+
\exp (- \Lambda t) {\bf 1}_{[0,T^*(\bz) )}(t) .
\end{equation}
The Markov process $\bZ^\delta$ starting from $\bz_0 = ({\color{black}x}_0,{\color{black}y}_0,v_0) \in E$ is defined as follows.\\
1) Generate a random variable $T_1$ such that $\PP(T_1>t)=F_{\bz_0}(t)$.
Generate a random variable $\bz_1=({\color{black}x}_1,{\color{black}y}_1,v_1)$ 
with distribution ${\mathcal Q}(\cdot ; {\color{black}x}_0,{\color{black}y}_0,\Phi_{{\color{black}x}_0,{\color{black}y}_0}(T_1,v_0))$.
The trajectory of $\bZ^\delta_t$ for $t \in [0,T_1]$ is given by 
\begin{equation}
\bZ^\delta_t = \left\{
\begin{array}{ll}
({\color{black}x}_0,{\color{black}y}_0 , \Phi_{{\color{black}x}_0,{\color{black}y}_0}(t,v_0)) & \mbox{ if } 0\leq t <T_1,\\
({\color{black}x}_1,{\color{black}y}_1,v_1) & \mbox{ if } t =T_1 .
\end{array}
\right.
\end{equation}
2) Starting from $\bZ_{T_1}^\delta =\bz_1$, generate the next inter-jump time $T_2-T_1$ 
such that $\PP(T_2-T_1>t)=F_{\bz_1}(t)$ and the post-jump location $\bz_2=({\color{black}x}_2,{\color{black}y}_2,v_2)$
has distribution  ${\mathcal Q}(\cdot ; {\color{black}x}_1,{\color{black}y}_1,\Phi_{{\color{black}x}_1,{\color{black}y}_1}(T_2-T_1,v_1))$.
The trajectory of $\bZ^\delta_t$ for $t \in [T_1,T_2]$ is given by 
\begin{equation}
\bZ^\delta_t = \left\{
\begin{array}{ll}
({\color{black}x}_1,{\color{black}y}_1 , \Phi_{{\color{black}x}_1,{\color{black}y}_1}(t-T_1,v_1)) & \mbox{ if } T_1\leq t < T_2,\\
({\color{black}x}_2,{\color{black}y}_2,v_2) & \mbox{ if } t =T_2.
\end{array}
\right.
\end{equation}
3) Iterate. This gives a piecewise deterministic trajectory $\bZ^\delta_t$ with jump times
$T_j$, $j \geq 1$.

The process $\bZ^\delta_t = {\color{black}(X^\delta_t,Y^\delta_t,V^\delta_t)}$ is a PDMP 
as introduced by \cite{davis84} and {\color{black}$(X^\delta_t, V^\delta_t)$} follows 
the random dynamics (\ref{def:Qdelta}-\ref{dryfrictiondelta}).
We can then use the theory and simulation methods developed for PDMPs described in \cite{davis84,saporta}.
{ \color{black}
Here are two pseudocodes summarizing the simulation method.\\ 
\begin{algorithm}[H]
\SetAlgoLined
\KwResult{Simulation of  $\left \{\bZ_{T_k}^\delta = (X^\delta,Y^\delta,V^\delta)_{T_k} 
\mbox{ where } \: k \geq 0 \: \mbox{ and }
\: T_k \leq t_f
\right \}$ from initial state $(x,y,v)$.
}
 $T  = 0, 
 \: X  = x, 
 \: Y  = y,
 \: V = v$\;
\While{$T  \leq t_f$}{
$(X',Y',V')=(X,Y,V), \: T' = T$\;
$\mathfrak{u}$ = uniform(.), 
$\delta T'  = \min \left ( - {\log(\mathfrak{u})}/{\Lambda}, 
T^*(X',Y',V')  \right )$\;
$(X,Y,V) = J(X',Y';\Phi(X',Y',V'; \delta T')), \:
T= T' + \delta T'$\;
(At each step $(X,Y,V)$ and $T$ are $(X^\delta,Y^\delta,V^\delta)_{T_k}$ and $T_k$).\\
}
\caption{PDMP simulation for dry friction.}
\end{algorithm}

\begin{algorithm}[H]
\SetAlgoLined
\KwResult{$(X,Y,{v})=J(x,y,{v})$}
$\alpha= \frac{1}{2} \left ( 1- \frac{ \tau \delta x}{2} \right ) \mathbf{1}_{\{ |x| < L_X^\delta \}}
+ (1-x^{-1} \max(x,0) )  \mathbf{1}_{\{ |x| = L_X^\delta \}}$\;
\eIf{$(|y|=1) \wedge (xy \leq x_{k_{\mus}}) \wedge (v=0)$}{$X=x$, $Y = -y\mathbf{1}_{\{ xy \leq x_{-(k_{\mus}+1)}\}}$\;}{
$\mathfrak{u}$ = uniform(.), $X=x+\delta \left ( \mathbf{1}_{\{ \mathfrak{u} \leq \alpha \}} - \mathbf{1}_{\{ \mathfrak{u} > \alpha \}} \right )$\; $Y = \mathbf{1}_{\{ y=0, |x| = x_{k_{\mus}} \}} \big( \mathbf{1}_{\{ x = x_{k_{\mus},  \mathfrak{u} \leq \alpha \} }} - \mathbf{1}_{\{ x = -x_{k_{\mus}},  \mathfrak{u} >\alpha \}} \big)$.
}
\caption{Simulation of a jump from $(x,y,{v})$}
\end{algorithm}
}

The Markov process $\bZ^\delta_t$ is irreducible on $E'$, with
\begin{equation}
\label{eq:defEp}
E'= S^\delta\times \{-1\}\times [-v_\textup{max},0] \bigcup
S^\delta\times \{1\}\times [0,+v_\textup{max}]
\bigcup
\{{\color{black}x}_{-k_{\mus}},\ldots,{\color{black}x}_{k_{\mus}}\} \times\{0\}\times \{0\} .
\end{equation}
More exactly, starting from any $({\color{black}x},{\color{black}y},v)\in E$, the Markov process $\bZ^\delta_t$ reaches $E'$ in finite time and it remains in $E'$ after that time.

The domain of the generator ${\cal L}$  of the process $\bZ^\delta_t$  contains the functions $f$ that are smooth and bounded in $v$ and that satisfy the boundary condition:
\begin{equation}
\label{eq:bcpdmp}
\forall \bz =({\color{black}x},{\color{black}y},v)\in \partial E,
\quad
f(\bz) =  \sum_{({\color{black}x}',{\color{black}y}')  \in 
\mathbb{S}^\delta 
} 
f({\color{black}x}',{\color{black}y}',v)  {\mathcal Q}(({\color{black}x}',{\color{black}y}',v);\bz)  .
\end{equation}
For those functions we have \cite[Theorem 5.5]{davis84}
\begin{align}
{\mathcal L} f(\bz) 
=
 B(\bz) \partial_v f(\bz) +\Lambda \sum_{({\color{black}x}',{\color{black}y}')  \in 
 \mathbb{S}^\delta 
 } 
[f({\color{black}x}',{\color{black}y}',v) -f(\bz)] {\mathcal Q}(({\color{black}x}',{\color{black}y}',v);\bz)  .
\end{align}
More exactly, the domain of the generator consists of the functions $f$ that satisfy the continuity condition:  for all $\bz =({\color{black}x},{\color{black}y},v) \in E$,
$$
t\mapsto f({\color{black}x},{\color{black}y},\Phi_{{\color{black}x},{\color{black}y}}(t,v)) \mbox{ is absolutely continuous for } t \in [0,T^*(\bz)),
$$
an integrability condition (fulfilled when $f$ is bounded), and the boundary condition (\ref{eq:bcpdmp}) \cite[Theorem 5.5]{davis84}.
The boundary condition (\ref{eq:bcpdmp}) can be written more explicitly as
{\color{black}
\begin{equation}
\left\{
\begin{array}{l}
\displaystyle  f({\color{black}x},-1,0) = f({\color{black}x},0,0)=f({\color{black}x},1,0),\quad \forall {\color{black}x} \in \{x_{-k_\mus},\ldots, x_{k_\mus}\} ,\\
\displaystyle  f({\color{black}x},-1,0) =f({\color{black}x},1,0),\quad \forall {\color{black}x} \in \{x_{-N},\ldots,x_{-k_\mus-1} \}\cup\{ x_{k_\mus+1}, \ldots, x_N\}   .
\end{array}
\right.
  \label{eq:bcpdmp2}
\end{equation}
}

As a consequence, for a smooth and bounded function $f(\bz)$ that satisfies (\ref{eq:bcpdmp2}),
we have
\begin{equation}
\EE[ f(\bZ^\delta_t) | \bZ^\delta_0=\bz] = F(0,\bz;t),
\end{equation}
where $(s,\bz) \mapsto F(s,\bz;t)$ is the solution of the backward Kolmogorov equation
\begin{equation}
\label{eq:kolmFs}
\partial_s F +{\mathcal L} F=0 \: \mbox{in} \: E \: \mbox{for} \: s\in (0,t),
\end{equation}
with the boundary condition (\ref{eq:bcpdmp}) on $\partial E$ for $s\in (0,t)$, and the terminal condition $F(s=t,\bz ; t) = f(\bz)$.

\section{Ergodicity and stationary state}
Given $f$ a function in the domain of ${\cal L}$, we consider the function 
\begin{equation}
u_\lambda ({\color{black}x},{\color{black}y},v;f) = \mathbb{E}_{({\color{black}x},{\color{black}y},v)}\Big[  \int_0^\infty e^{-\lambda s} f({\color{black}X^\delta_s ,Y^\delta_s , V^\delta_s}) \textup{d} s \Big].
\end{equation}
From the theory of Markov processes \cite{MR0838085,davis84}, 
it satisfies the equation
\begin{equation}
\label{plambda}
\lambda u_\lambda - {\cal L} u_\lambda = f \: \mbox{ in } \: E.
\end{equation}
Let us introduce the stopping times
{\color{black}
\begin{align}
\hat{\tau}_n  &= \inf \big\{ t \geq \tau_{n-1},  \,  {\color{black}V^\delta_t} = 0 \mbox{ and } | {\color{black}X^\delta_t} | \leq \mus \big\}, \quad n \geq 1, \\
\tau_n &= \inf \big\{ t \geq \hat{\tau}_{n} , \,  ({\color{black}X^\delta_t ,Y^\delta_t , V^\delta_t}) \in \{  \mathfrak{s}_-,\mathfrak{s}_+\}  \big\} ,\quad n\geq 1,
\end{align}
where $\mathfrak{s}_\pm = (\pm {\color{black}x}_{k_{\mus}+1},\pm1,0)$ and $\tau_0=0$.} The two points  $\mathfrak{s}_\pm$ are the two possible exit points of the static phase.
For $n\geq 1$, 
the stopping times $\hat{\tau}_n$ and $\tau_n$ represent respectively the entry and exit times of the $n$-th static phases which are defined as the time intervals when $({\color{black}X^\delta_t ,Y^\delta_t , V^\delta_t}) \in D^0$, $D^0=\{({\color{black}x}_k,0,0), k=-k_{\mus},\ldots,k_{\mus}\}$.
The recurrence and ergodicity of the Markov process is a consequence of the following proposition (see Appendix \ref{app:a} for the proof).

\begin{prop}
\label{prop:31}%
We have
$\EE_{\mathfrak{s}_+} \big[ {\tau_1} \big] < \infty$.
\end{prop}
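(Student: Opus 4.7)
The plan is to decompose $\tau_1 = \hat\tau_1 + (\tau_1 - \hat\tau_1)$ and bound the two expectations separately. For the static-phase duration, on the interval $[\hat\tau_1, \tau_1]$ the process $\bZ^\delta$ lives in $D^0$ with $v^\delta$ and $\nu^\delta$ frozen at $0$; only $\eta^\delta$ evolves, as a nearest-neighbor jump chain at total rate $\Lambda$ on the finite set $\{\eta_{-k_{\mus}}, \ldots, \eta_{k_{\mus}}\}$ with absorption at $\pm \eta_{k_{\mus}+1}$ (i.e.\ exit to $\mathfrak{s}_\pm$). Since $\alpha(\cdot)$ and $1-\alpha(\cdot)$ are strictly positive on the interior, this is a finite-state absorbing Markov chain with certain absorption, so standard theory yields $\sup_k \EE[\tau_1 - \hat\tau_1 \mid \eta^\delta_{\hat\tau_1} = \eta_k] < \infty$, obtained as the solution of an explicit linear system in $2k_{\mus}+1$ unknowns.

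For $\EE_{\mathfrak{s}_+}[\hat\tau_1]$ I would use a geometric-trials argument: exhibit $T, p > 0$ such that $\PP_\bz(\hat\tau_1 \leq T) \geq p$ for every reachable starting point $\bz \in E'$ with $v \neq 0$. The strong Markov property applied at the times $\{kT\}_{k \geq 1}$ then yields $\PP_{\mathfrak{s}_+}(\hat\tau_1 > kT) \leq (1-p)^k$, hence $\EE_{\mathfrak{s}_+}[\hat\tau_1] \leq T/p < \infty$. To construct such a good event, pick $\eta^\star = 0 \in S^\delta$; since $\mud > 0$, one has both $\eta^\star < \mud$ and $|\eta^\star| \leq \mus$. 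Set $T^\star = \log(1 + v_\textup{max}/\mud)$ and $T = 2T^\star$. Consider the event $\mathcal{G}_\bz$ that $\eta^\delta$ follows the shortest nearest-neighbor path from the starting $\eta$ to $\eta^\star$, finishes this path before time $T^\star$, and then performs no further jump before time $T$. Since $S^\delta$ is finite, since $\alpha$ and $1-\alpha$ are bounded below on the interior, and since the path length is at most $2N$, the quantity $\PP_\bz(\mathcal{G}_\bz)$ is bounded below by a positive constant $p$ independent of $\bz$. On $\mathcal{G}_\bz$, during $[T^\star, T]$ the flow reads $\dot v = -\mud - v$ in mode $\nu=1$ or $\dot v = \mud - v$ in mode $\nu = -1$; in either case the explicit solution drives $v^\delta$ to $0$ within time $T^\star$, uniformly for $|v(T^\star)| \leq v_\textup{max}$. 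At that moment $\eta^\delta = \eta^\star \in [-\mus, \mus]$, so the process enters the static phase and $\hat\tau_1 \leq T$.

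The main technical obstacle is the uniformity of $\PP_\bz(\mathcal{G}_\bz) \geq p$ across all starting points $\bz$; this is automatic because the $\eta$-coordinate lives in the finite set $S^\delta$ and the $v$-coordinate in the bounded interval $[-v_\textup{max}, v_\textup{max}]$, so only finitely many starting configurations must be controlled and each of them admits the same explicit recipe. Combining the two pieces via the strong Markov property at $\hat\tau_1$ yields $\EE_{\mathfrak{s}_+}[\tau_1] = \EE_{\mathfrak{s}_+}[\hat\tau_1] + \EE_{\mathfrak{s}_+}\!\big[\EE_{\bZ^\delta_{\hat\tau_1}}[\tau_1 - \hat\tau_1]\big] < \infty$.
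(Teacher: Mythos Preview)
Your proof is correct and follows a genuinely different route from the paper for the bound on $\EE_{\mathfrak{s}_+}[\hat\tau_1]$. Both arguments treat the static-phase contribution $\tau_1-\hat\tau_1$ identically (finite absorbing chain for $\eta^\delta$ on $\{\eta_{-k_\mus},\ldots,\eta_{k_\mus}\}$). For the dynamic part, the paper introduces the intermediate stopping time $\tilde\tau_1=\inf\{t>0: v^\delta_t=0\}$, bounds $\sup_\eta\EE_{(\eta,1,0)}[\tilde\tau_1]$ via a fourth-moment Chebyshev estimate using the ergodic properties of $\eta^\delta$, shows that $\PP(\eta^\delta_{\tilde\tau_1}\in\{-\eta_{k_\mus},\ldots,\eta_{k_\mus}\})$ is bounded away from zero by exhibiting a specific trajectory, and then runs a geometric argument over successive returns to $\{v=0\}$. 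You instead establish a uniform minorization $\PP_\bz(\hat\tau_1\le T)\ge p$ directly, by forcing $\eta^\delta$ to $0$ and freezing it there while the deterministic flow kills $v$; the geometric bound is then over deterministic times $kT$. Your approach is more elementary---no appeal to ergodic moment asymptotics for $\int_0^t\eta^\delta_s\,\textup{d}s$---while the paper's approach is closer to the natural renewal structure of the process and yields slightly more (uniform control over all starting modes in $\{\eta_{k_\mus+1},\ldots,\eta_N\}\times\{1\}\times\{0\}$).

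One small imprecision: you write ``only finitely many starting configurations must be controlled'', but $v$ ranges over a continuum. What actually makes the bound uniform is that the probability of your event $\mathcal G_\bz$ depends on $\bz$ only through the finitely many possible values of $\eta$, while the flow estimate on $[T^\star,T]$ holds uniformly for $|v|\le v_\textup{max}$. This is what you use, so the argument is sound; just state the uniformity reason accurately.
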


We propose below a representation formula for the stationary measure of the process $({\color{black}X^\delta_t ,Y^\delta_t , V^\delta_t})$ that
is based on the functions ${h}^\pm_\lambda$ and $w_\lambda$ that are defined by 
\begin{align}
\label{eq:defpiw}
{h}_\lambda^\pm({\color{black}x},{\color{black}y},v) &= \mathbb{E}_{({\color{black}x},{\color{black}y},v)} \big[ e^{-\lambda \tau_1} \mathbf{1}_{ \{ ({\color{black}X^\delta_{\tau_1},Y^\delta_{\tau_1}, V^\delta_{\tau_1} }) = \mathfrak{s}_\pm \} } \big],\\
w_\lambda({\color{black}x},{\color{black}y},v;f) &= \mathbb{E}_{({\color{black}x},{\color{black}y},v)} \Big[ \int_{0}^{\tau_1} e^{-\lambda s} f({\color{black}X^\delta_s ,Y^\delta_s , V^\delta_s}) \textup{d} s \Big]  ,
\label{eq:defwlambda}
\end{align}
where $f$ is  a bounded function. The functions ${h}^\pm_\lambda$ and $w_\lambda$ can be computed as explained in the following proposition.

\begin{prop}
Let us introduce two absorbing states $\mathfrak{s}_\pm'$ and a modified kernel ${\cal L}'$ such that 
\begin{equation}
{\cal L}'f (\bz)=
 B(\bz) \partial_v f(\bz) + { \color{black} \Lambda} \sum_{({\color{black}x}',{\color{black}y}')  \in 
 \mathbb{S}^\delta 
 } 
[f({\color{black}x}',{\color{black}y}',v) -f(\bz)] {\mathcal Q} (({\color{black}x}',{\color{black}y}',v);\bz)
\end{equation}
 for all $\bz \in \overline{E} \backslash \{ ({\color{black}x}_{-k_{\mus}},0,0) , ({\color{black}x}_{k_{\mus}},0,0)  \}$,
${\cal L}'f (\mathfrak{s}_\pm')=0$, and 
\begin{align}
\nonumber
{\cal L'} f \big( ({\color{black}x}_{k_{\mus}},0,0) \big)
=
& \alpha({\color{black}x}_{k_{\mus}}) \big[ f(\mathfrak{s}_+') - f \big( ({\color{black}x}_{k_{\mus}},0,0) \big) \big]\\
& + \big(1-\alpha({\color{black}x}_{k_{\mus}}) \big) \big[ f\big(({\color{black}x}_{k_{\mus}-1},0,0)\big) - f \big( ({\color{black}x}_{k_{\mus}},0,0) \big)   \big],\\
\nonumber
 {\cal L'} f \big( ({\color{black}x}_{-k_{\mus}},0,0) \big) = & 
\alpha({\color{black}x}_{-k_{\mus}}) \big[ f\big(({\color{black}x}_{-k_{\mus}+1},0,0)\big) - f \big( ({\color{black}x}_{-k_{\mus}},0,0) \big) \big]\\  
& +
\big(1-\alpha({\color{black}x}_{-k_{\mus}}) \big) \big[ f(\mathfrak{s}_-')- f \big( ({\color{black}x}_{-k_{\mus}},0,0) \big)   \big] .
\end{align}
The functions ${h}_\lambda^\pm$ and $w_\lambda(\cdot;f)$ defined by (\ref{eq:defpiw}) and (\ref{eq:defwlambda}) satisfy
\begin{align}
& \lambda {h}_\lambda^+ - {\cal L}' {h}_\lambda^+ = 0 \: \mbox{ in } \: E,
\quad \quad {h}_\lambda^+(\mathfrak{s}_+') = 1, \: {h}_\lambda^+(\mathfrak{s}_-') = 0 , \\
& \lambda {h}_\lambda^- - {\cal L}' {h}_\lambda^- = 0 
\: \mbox{ in } \: E,
\quad \quad {h}_\lambda^-(\mathfrak{s}_+') = 0, \: {h}_\lambda^-(\mathfrak{s}_-') = 1 ,
\end{align}
and
\begin{equation}
\label{def:wlambda}
\lambda w_\lambda(\cdot;f) - {\cal L}' w_\lambda(\cdot;f) = f \: \mbox{ in } \: E, 
\quad \quad w_\lambda(\mathfrak{s}_\pm';f) = 0.
\end{equation}
\end{prop}
\begin{proof}
A trajectory of {\color{black} $(X^\delta,Y^\delta,V^\delta)$} can reach $\mathfrak{s}_+$ by different ways. Indeed, just before the time at which $\mathfrak{s}_+$ is reached, the process can be in a static phase at the point $({\color{black}x}_{k_{\mus}},0,0)$ (and it jumps from the static phase to the dynamic phase) or else it can be in a dynamic phase 
at $( {\color{black}x}_{k_{\mus}+1},-1,v)$, $v<0$, and it jumps to $\mathfrak{s}_+$ when ${\color{black}V}^\delta_t$ reaches $0$.
This comment motivates the introduction of the two absorbing states $\mathfrak{s}_\pm'$.
The Markov process {\color{black}$(X^{\delta,\prime},Y^{\delta,\prime},V^{\delta,\prime})$} with the modified kernel ${\cal L}'$ follows the same dynamics as the original process {\color{black}$(X^\delta,Y^\delta,V^\delta)$}, except for one aspect: when the original process is at $({\color{black}x}_{k_{\mus}},0,0)$ (resp. $({\color{black}x}_{-k_{\mus}},0,0)$) and jumps to the east (resp. to the west), it jumps to $\mathfrak{s}_+$
(resp. $\mathfrak{s}_-$); when the modified process is at $({\color{black}x}_{k_{\mus}},0,0)$ (resp. $({\color{black}x}_{-k_{\mus}},0,0)$) and jumps to the east (resp. to the west), it jumps to $\mathfrak{s}_+'$ (resp. $\mathfrak{s}_-'$) and does not move anymore. 
As a consequence,
we have for any $({\color{black}x,y},v)\in E$:
\begin{align}
{h}_\lambda^\pm({\color{black}x},{\color{black}y},v) &= \mathbb{E}_{({\color{black}x},{\color{black}y},v)} \big[ e^{-\lambda \tau_1^\prime} \mathbf{1}_{ ({\color{black}X}^{\delta,\prime}_{\tau_1^\prime},{\color{black}Y}^{\delta,\prime}_{\tau_1^\prime},{\color{black}V}^{\delta,\prime}_{\tau_1^\prime}) = \mathfrak{s}_\pm^\prime } \big] ,\\
w_\lambda({\color{black}x},{\color{black}y},v;f) &= \mathbb{E}_{({\color{black}x},{\color{black}y},v)} \Big[ \int_0^{\tau_1^\prime} e^{-\lambda s} f({\color{black}X}^{\delta,\prime}_s,{\color{black}Y}^{\delta,\prime}_s,{\color{black}V}^{\delta,\prime}_s) \textup{d} s \Big]  ,
\end{align}
where 
$$
\tau_1^\prime = \inf \big\{t\geq 0, ({\color{black}X}_t^{\delta,\prime},{\color{black}Y}_t^{\delta,\prime},{\color{black}V}_t^{\delta,\prime}) \in \{ \mathfrak{s}_-^\prime,\mathfrak{s}_+^\prime\}\big\} ,
$$
and the statement of the proposition follows immediately.
\end{proof}

The function $w_\lambda(\cdot;1)$ is $w_\lambda(\cdot;f)$ when $f=1$. 
The following proposition is inspired from \cite{MR2922084} and is proved in Appendix \ref{app:b}.
\begin{prop}
\label{prop:32}
We have the representation formula 
\begin{align}
\label{repformula}
u_\lambda({\color{black}x},{\color{black}y},v;f)  = & w_\lambda({\color{black}x},{\color{black}y},v;f) - {\pi}_\lambda(f)w_\lambda({\color{black}x},{\color{black}y},v;1) \nonumber \\ 
& + \mu_\lambda(f) \big({h}_\lambda^+ ({\color{black}x},{\color{black}y},v) - {h}_\lambda^-({\color{black}x},{\color{black}y},v )\big) + \frac{{\pi}_\lambda(f)}{\lambda} ,
\end{align}
for all $({\color{black}x},{\color{black}y},v)\in E$,
 where 
\begin{equation}
{\pi}_\lambda(f) = \frac{w_\lambda(\mathfrak{s}_+;f)+w_\lambda(\mathfrak{s}_-;f) }{2 w_\lambda(\mathfrak{s}_+;1)}
\:
\mbox{ and }
\:
\mu_\lambda(f) = \frac{w_\lambda(\mathfrak{s}_+;f)- w_\lambda(\mathfrak{s}_-;f)}{2 (1-{h}_\lambda^+(\mathfrak{s}_+) + {h}_\lambda^-(\mathfrak{s}_+))}.
\end{equation}
We have the following characterization for the stationary distribution of 
the process $({\color{black}X}_t^\delta,{\color{black}Y}_t^\delta,{\color{black}V}_t^\delta)$:
\begin{equation}
{\pi} (f) = \lim_{\lambda \to 0} \lambda u_\lambda({\color{black}x},{\color{black}y},v;f) = 
\frac{w_0(\mathfrak{s}_+;f)+w_0(\mathfrak{s}_-;f)}{2 w_0(\mathfrak{s}_+;1)},
\label{eq:represnu}
\end{equation}
for any bounded function $f$.
\end{prop}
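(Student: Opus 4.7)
The plan is to prove the representation formula by applying the strong Markov property at the stopping time $\tau_1$, then to extract the stationary distribution by taking the Abelian limit $\lambda\to 0$.

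First, I would decompose $u_\lambda(\bz;f)$ into the contribution up to $\tau_1$ and the contribution afterwards. Splitting the integral at $\tau_1$ and using the strong Markov property together with the fact that $(\eta^\delta_{\tau_1},\nu^\delta_{\tau_1},v^\delta_{\tau_1})\in\{\mathfrak{s}_-,\mathfrak{s}_+\}$ almost surely (guaranteed by Proposition \ref{prop:31}, which ensures $\tau_1<\infty$ a.s.), I get the fundamental identity
\begin{equation*}
u_\lambda(\bz;f) = w_\lambda(\bz;f) + h_\lambda^+(\bz)\, u_\lambda(\mathfrak{s}_+;f) + h_\lambda^-(\bz)\, u_\lambda(\mathfrak{s}_-;f).
\end{equation*}
Evaluating this identity at $\bz=\mathfrak{s}_\pm$ yields a $2\times 2$ linear system for $u_\lambda(\mathfrak{s}_\pm;f)$.

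Next, I would invoke the symmetry of the dynamics, which interchanges $\mathfrak{s}_+$ and $\mathfrak{s}_-$ (the drift $b(\eta,v)=\eta-v$, the generator $Q^\delta$, and the threshold structure all are odd under $(\eta,v)\mapsto(-\eta,-v)$). This symmetry gives $h_\lambda^+(\mathfrak{s}_+)=h_\lambda^-(\mathfrak{s}_-)$, $h_\lambda^-(\mathfrak{s}_+)=h_\lambda^+(\mathfrak{s}_-)$, and $w_\lambda(\mathfrak{s}_+;1)=w_\lambda(\mathfrak{s}_-;1)$, which simplifies the $2\times 2$ system so that solving for the symmetric and antisymmetric combinations $u_\lambda(\mathfrak{s}_+;f)\pm u_\lambda(\mathfrak{s}_-;f)$ produces precisely the coefficients $\pi_\lambda(f)/\lambda$ and $\mu_\lambda(f)$ defined in the statement. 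Substituting back into the fundamental identity yields the claimed representation formula \eqref{repformula}.

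For the second part, I would let $\lambda\to 0$ in the representation formula after multiplying by $\lambda$. Since Proposition \ref{prop:31} gives $w_0(\mathfrak{s}_\pm;1)=\EE_{\mathfrak{s}_\pm}[\tau_1]<\infty$, the quantities $w_\lambda(\bz;f)$ and $w_\lambda(\bz;1)$ remain bounded (for bounded $f$) and converge to $w_0$ by dominated convergence. Also $h_\lambda^\pm(\bz)\to h_0^\pm(\bz)$, and since $\tau_1<\infty$ a.s.\ one has the crucial identity $h_0^+(\bz)+h_0^-(\bz)=1$, so $1-h_\lambda^+(\mathfrak{s}_+)+h_\lambda^-(\mathfrak{s}_+)\to 2 h_0^-(\mathfrak{s}_+)>0$ by irreducibility on $E'$ (Eq.~\eqref{eq:defEp}). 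Hence $\pi_\lambda(f)\to\pi_0(f)$ and $\mu_\lambda(f)$ stays bounded, so the terms $\lambda w_\lambda$, $\lambda\pi_\lambda(f) w_\lambda$, and $\lambda\mu_\lambda(f)(h_\lambda^+-h_\lambda^-)$ all vanish in the limit, leaving $\lim_{\lambda\to 0}\lambda u_\lambda(\bz;f)=\pi_0(f)$, which is \eqref{eq:represnu}. The identification of this limit with the stationary measure is classical (Abelian/Tauberian, or direct via the ergodic theorem for PDMPs applied via Proposition \ref{prop:31}).

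The main obstacle I anticipate is the algebraic bookkeeping of the $2\times 2$ system and the careful verification that the symmetry $\mathfrak{s}_+\leftrightarrow\mathfrak{s}_-$ holds at the level of the embedded chain so that the denominator $2w_0(\mathfrak{s}_+;1)$ (as opposed to $w_0(\mathfrak{s}_+;1)+w_0(\mathfrak{s}_-;1)$) is correct; a cleaner way is to keep both quantities and only symmetrize at the very end. The positivity of $1-h_0^+(\mathfrak{s}_+)+h_0^-(\mathfrak{s}_+)$, which guarantees invertibility of the system and finiteness of $\mu_\lambda$, also requires a brief irreducibility argument but follows from the same recurrence ingredient as Proposition \ref{prop:31}.
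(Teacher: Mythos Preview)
Your approach is correct and in fact takes a genuinely different route from the paper's proof. The paper does \emph{not} apply the strong Markov property at $\tau_1$ directly. Instead it introduces an auxiliary process $(\eta^\epsilon_t,\nu^\epsilon_t,v^\epsilon_t)$ with two extra absorbing/reinjecting states $\mathfrak{s}_\pm'$ (the reinjection time from $\mathfrak{s}_\pm'$ to $\mathfrak{s}_\pm$ has mean $\epsilon\tau^2\delta^2$), writes down the modified generator ${\cal L}^\epsilon$, and then \emph{verifies} that the candidate formula $w_\lambda+\pi_\lambda^\epsilon(f)(\lambda^{-1}-w_\lambda(\cdot;1))+\mu_\lambda^\epsilon(f)(h_\lambda^+-h_\lambda^-)$ solves the resolvent equation $\lambda u-{\cal L}^\epsilon u=f$ on $E\cup\{\mathfrak{s}_\pm'\}$; uniqueness then forces equality with $u_\lambda^\epsilon$, after which $\epsilon\to 0$ is taken. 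The symmetric/antisymmetric decomposition of $f$ is used to handle the two boundary constants separately.

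Your probabilistic derivation is more elementary: the single identity $u_\lambda(\bz;f)=w_\lambda(\bz;f)+h_\lambda^+(\bz)u_\lambda(\mathfrak{s}_+;f)+h_\lambda^-(\bz)u_\lambda(\mathfrak{s}_-;f)$ together with the algebraic relation $h_\lambda^+(\bz)+h_\lambda^-(\bz)=1-\lambda w_\lambda(\bz;1)$ (which you will need explicitly, both at $\bz=\mathfrak{s}_+$ to recognise the denominator $2w_\lambda(\mathfrak{s}_+;1)$ and at general $\bz$ to turn the coefficient of $\pi_\lambda(f)/\lambda$ into $1-\lambda w_\lambda(\bz;1)$) does all the work without any auxiliary process or PDE verification. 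The paper's route has the advantage that it never assumes the strong Markov property at $\tau_1$ holds cleanly for the PDMP (it works purely at the level of generators and uniqueness of resolvents), whereas yours is shorter and makes the regenerative structure transparent. Your treatment of the $\lambda\to 0$ limit is essentially the same as the paper's.
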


\section{Dynamical properties}
\label{sec:dyn}

\subsection{Excursions}

The random variables $(\tau_{n+1}-\tau_{n})_{n\geq 0}$ are integrable, independent and identically distributed.
This follows from the strong Markov property, from the fact that $\tau_1$ is integrable under $\PP_{\mathfrak{s}_+}$ (we have $\tau_0=0$ a.s. under $\PP_{\mathfrak{s}_+}$), and from the symmetry of the system which implies that the distributions of 
$\tau_1$ starting from $\mathfrak{s}_+$ and from  $\mathfrak{s}_-$ are identical.

For any bounded (in fact, ${\pi}$-integrable) function $f$, ${\pi}(f)$ can be computed by (\ref{eq:represnu}).
By ergodicity we also have
\begin{equation}
\label{eq:rep_excursion}{\pi} (f) = \frac{1}{\EE_{\mathfrak{s}_+} [{\tau_1}]} \EE_{\mathfrak{s}_+} \Big[ \int_0^{{\tau_1}} 
f_{\rm even}({\color{black}X^\delta_s,Y^\delta_s,V^\delta_s}) \textup{d} s\Big]  ,
\end{equation}
with $f_{\rm even}({\color{black}x},{\color{black}y}, v) = (f({\color{black}x},{\color{black}y},v)+f(-{\color{black}x},-{\color{black}y},-v))/2$. 
Indeed, on the one hand, by symmetry of the system $(-{\color{black}X}^\delta_t, - {\color{black}Y}^\delta_t,-{\color{black}V}^\delta_t)$ has the same stationary distribution as $({\color{black}X}^\delta_t,{\color{black}Y}^\delta_t,{\color{black}V}^\delta_t)$. Therefore, if $f$ is odd, then ${\pi}(f)=0$ [this follows also from (\ref{eq:represnu}) since $w_0(\mathfrak{s}_-,f) = w_0(\mathfrak{s}_+,f(-\cdot)) =w_0(\mathfrak{s}_+, - f) =- w_0(\mathfrak{s}_+, f) $ when $f$ is odd]. On the other hand, denoting  $\eps_n={\rm sgn}({\color{black}X}^\delta_{\tau_n})$, 
the strong Markov property implies that the excursions $(\eps_n {\color{black}X}^\delta_{\tau_n+t} , \eps_n {\color{black}Y}^\delta_{\tau_n+t} ,
\eps_n {\color{black}V}^\delta_{\tau_n+t})_{t\in [0,\tau_{n+1}-\tau_n]}$ are independent and identically distributed with the distribution of $({\color{black}X}^\delta_{t} ,{\color{black}Y}^\delta_t, {\color{black}V}^\delta_t)_{t\in [0,{\tau_1}]}$ starting from $\mathfrak{s}_+$.
Therefore, if $f$ is even, then ${\pi}(f) = \frac{1}{\EE_{\mathfrak{s}_+} [{\tau_1}]} \EE_{\mathfrak{s}_+} \big[ \int_0^{{\tau_1}} f ({\color{black}X}^\delta_s,{\color{black}Y}^\delta_s, {\color{black}V}^\delta_s) \textup{d} s\big]$.

\subsection{Power spectral density}
\label{subsec:psd}%
The power spectral density { \color{black}(PSD)} of the process ${\color{black}V}_t^\delta$ can be defined as \cite{MC2012} 
\begin{equation}
\label{eq:defpsd}
S_v(\omega) = \lim \limits_{T \to \infty} \frac{1}{T} S_{v,T}(\omega)
\quad  
\mbox{where}
\quad
S_{v,T}(\omega) =  \EE_\pi\Big[ \Big| \int_0^T {\color{black}V}^\delta_t  \exp (-\bi \omega t ) \textup{d} t \Big|^2 \Big]. 
\end{equation} 
We have
\begin{equation}
\label{eq:expressSvomega}
S_v(\omega) = 2 {\rm Re} \big( {\pi}\big( v \hat{\phi}(\omega,{\color{black}x},{\color{black}y},v)\big)\big)  ,
\end{equation}
where $\hat{\phi}$ is the solution of 
\begin{align}
\label{eq:hatphi1}
( \bi \omega- {\cal L}) \hat{\phi} = v \: \mbox{in} \; E.
\end{align} 
Remark: For $\omega=0$ the solution is unique up to an additive constant which does not play any role in the evaluation of $S_v(\omega) $.

{\it Proof.}
{\color{black}
We have $S_{v,T}(\omega)
=\int_0^T \int_0^T \EE_\pi [ {\color{black}V}^\delta_t  {\color{black}V}^\delta_{t'} ] \cos \big(  \omega (t-t') \big) \textup{d} t  \textup{d} t'$. 
Using the stationarity $\EE_\pi [ {\color{black}V}^\delta_t  {\color{black}V}^\delta_{t'} ] = \EE_\pi [ {\color{black}V}^\delta_{|t-t'|}  {\color{black}V}^\delta_0 ]$, we find
 $S_{v,T}(\omega)
= 2 \int_0^T (T-t) \EE_\pi [ {\color{black}V}^\delta_t  {\color{black}V}^\delta_0 ] \cos  (  \omega t ) \textup{d} t$.
Using Lebesgue's dominated convergence theorem and the integrability of $t \mapsto \EE_\pi [ {\color{black}V}^\delta_t  {\color{black}V}^\delta_0 ]$ (see Appendix \ref{app:intpsi}), we get
$$
\lim_{T\to+\infty}\frac{1}{T} S_{v,T}(\omega)= S_v(\omega) = 2\int_0^{+\infty} \EE_{\pi} [ {\color{black}V}^\delta_0 {\color{black}V}^\delta_t] \cos(\omega t) \textup{d} t.
$$
We can write $
\EE_{\pi} [{\color{black}V}^\delta_0 {\color{black}V}^\delta_t ] =\EE_{\pi} [{\color{black}V}^\delta_0 \EE[ {\color{black}V}^\delta_t | {\color{black}X}^\delta_0 ,{\color{black}Y}^\delta_0, {\color{black}V}^\delta_0   ] ] =  {\pi}\big( v \phi(t,{\color{black}x},{\color{black}y},v) \big) $,}
where $\phi(t,{\color{black}x},{\color{black}y},v) = \EE[ {\color{black}V}^\delta_t |{\color{black}X}^\delta_0={\color{black}x},{\color{black}Y}^\delta_0={\color{black}y},{\color{black}V}^\delta_0=v]$ is the solution of 
$$
(\partial_t - {\cal L}) \phi  = 0 \: \mbox{in} \: E ,\: t >0,\quad\quad
\phi(t=0,v,{\color{black}y},{\color{black}x})   = v. 
$$
The function $\hat{\phi}(\omega,{\color{black}x},{\color{black}y},v) = \int_0^\infty \phi(t,{\color{black}x},{\color{black}y},v) \exp(-\bi \omega t) \textup{d} t$ satisfies (\ref{eq:hatphi1})
because \\
$\int_0^{+\infty} \partial_t \phi(t,{\color{black}x},{\color{black}y},v) \exp(-\bi \omega t) \textup{d} t = -\phi(t=0,{\color{black}x},{\color{black}y},v)+\bi \omega \hat{\phi}(\omega,{\color{black}x},{\color{black}y},v)$.
We then get (\ref{eq:expressSvomega}).
\qed

\subsection{Probability of sticking}
\label{subsec:probastick}%
We are interested in the probability of sticking, that is the empirical proportion of time spent in the sticking phase:
\begin{equation}
\label{pstick}
\lim_{T \to +\infty}\frac{1}{T} \int_0^T {\bf 1}_{  D^0 } ({\color{black}X}^\delta_t,{\color{black}Y}^\delta_t,{\color{black}V}^\delta_t)\textup{d} t ,
\end{equation}
where $D^0=\{ ({\color{black}x}_k,0,0), k=-k_{\mus},\ldots,k_{\mus}\}$.
By ergodicity 
 the limit (\ref{pstick}) exists almost surely, is deterministic and its value is independent of the starting point $({\color{black}X}^\delta_0,{\color{black}Y}^\delta_0,{\color{black}V}^\delta_0)$ and given by
\begin{equation}
P_{\rm stick} =  \frac{\EE_{\mathfrak{s}_+} \big[ \tau_1- \hat{\tau}_1 \big]}{\EE_{\mathfrak{s}_+} \big[ {\tau_1} \big]} =1-  \frac{\EE_{\mathfrak{s}_+} \big[ \hat{\tau}_1 \big]}{\EE_{\mathfrak{s}_+} \big[ {\tau_1} \big]} .
\end{equation}
This number can be evaluated as follows.
\begin{prop}
Let $\hat{{\color{black}R}}({\color{black}x},{\color{black}y},v)$, $\check{{\color{black}R}}({\color{black}x},{\color{black}y},v)$, $({\color{black}x},{\color{black}y},v)\in E$, and ${\color{black}R}({\color{black}x})$, ${\color{black}x} \in \{ {\color{black}x}_{-k_{\mus}-1},\ldots,{\color{black}x}_{k_{\mus}+1}\}$,
 be the solutions of 
\begin{align}
{\cal L} \hat{{\color{black}R}} &= -1 \: \mbox{ in } \: E\backslash D^0 ,
\quad \quad \hat{{\color{black}R}} = 0 \: \mbox{ in } \: D^0 , \\
{\cal L} \check{{\color{black}R}} &= -1 \: \mbox{ in } \: E\backslash \{({\color{black}x}_{\pm k_{\mus}},0,0)\},
\quad \quad \check{{\color{black}R}}({\color{black}x}_{\pm k_{\mus}},0,0) = 0, \\
Q^{\delta} {\color{black}R} &= -1  \: \mbox{ in } \: \{ {\color{black}x}_{-k_{\mus}},\ldots,{\color{black}x}_{k_{\mus}}\} ,
\quad \quad {\color{black}R} ( {\color{black}x}_{\pm (k_{\mu}+1)}) = 0.
\end{align}
Then we have $\EE_{\mathfrak{s}_+} \big[ \hat{\tau}_1 \big] = \hat{{\color{black}R}}(\mathfrak{s}_+)$,
$\EE_{\mathfrak{s}_+} \big[ {\tau_1} \big]=\check{{\color{black}R}}(\mathfrak{s}_+)+{\color{black}R}({\color{black}x}_{k_{\mus}})$,
 and
\begin{equation}
\label{pstick_formula_ratio}
P_{\rm stick}= 1 - \frac{\hat{{\color{black}R}}(\mathfrak{s}_+)}{\check{{\color{black}R}}(\mathfrak{s}_+)+{\color{black}R}({\color{black}x}_{k_{\mus}})} .
\end{equation}
\end{prop}

Note that we also have $\EE_{\mathfrak{s}_+} \big[ \hat{\tau}_1 \big]  = w_0(\mathfrak{s}_+ , {\bf 1}_{E\backslash D^0})$ and $\EE_{\mathfrak{s}_+} \big[ {\tau_1} \big]= w_0(\mathfrak{s}_+,1)=w_0(\mathfrak{s}_+ , {\bf 1}_{E\backslash D^0})+w_0(\mathfrak{s}_+ , {\bf 1}_{D^0})$, where $w_0$ has been introduced in 
(\ref{eq:defwlambda}), so that we can also write
\begin{equation}
P_{\rm stick}=1- \frac{w_0(\mathfrak{s}_+ , {\bf 1}_{E\backslash D^0})}{w_0(\mathfrak{s}_+ , {\bf 1}_{E\backslash D^0})+w_0(\mathfrak{s}_+ , {\bf 1}_{D^0})} 
=
\frac{w_0(\mathfrak{s}_+ , {\bf 1}_{D^0})}{w_0(\mathfrak{s}_+ , {\bf 1}_{E\backslash D^0})+w_0(\mathfrak{s}_+ , {\bf 1}_{D^0})} 
.
\end{equation}


\begin{proof}
We introduce
$$
\check{\tau}_1=
 \inf \big\{ t \geq 0,  \,  {\color{black}V}^\delta_t = 0 \mbox{ and } {\color{black}X}^\delta_t \in \{ -{\color{black}x}_{k_{\mus}},{\color{black}x}_{k_{\mus}} \} \big\} .
$$
We have 
$\EE_{({\color{black}x},{\color{black}y},v)} \big[ \hat{\tau}_1 \big] = \hat{{\color{black}R}}({\color{black}x},{\color{black}y},v )$ for any $({\color{black}x},{\color{black}y},v)\in E\backslash D^0$,
$\EE_{({\color{black}x},{\color{black}y},v)} \big[ \check{\tau}_1 \big] =\check{{\color{black}R}}({\color{black}x},{\color{black}y},v)$ for any $({\color{black}x},{\color{black}y},v)\in E$,
and
$\EE_{({\color{black}x},0,0)}[{\tau}_1] = {{\color{black}R}}({\color{black}x})$ for any ${\color{black}x}\in \{{\color{black}x}_{-k_{\mus}},\ldots,{\color{black}x}_{k_{\mus}}\}$.
Moreover, by the strong Markov property,
\begin{align*}
\EE_{\mathfrak{s}_+} \big[ {\tau}_1 \big] 
&=
\EE_{\mathfrak{s}_+} \big[ \check{\tau}_1 \big] + \EE_{\mathfrak{s}_+} \big[ \EE[ \tau_1-\check{\tau}_1 | \check{\tau}_1] \big] \\
&=
\EE_{\mathfrak{s}_+} \big[ \check{\tau}_1 \big] 
+ \EE_{({\color{black}x}_{k_{\mus}},0,0)}[ \tau_1 ] \PP_{\mathfrak{s}_+} \big( {\color{black}X}^\delta_{\check{\tau}_1} ={\color{black}x}_{k_{\mus}}\big) 
+ \EE_{({\color{black}x}_{-k_{\mus}},0,0)}[ \tau_1  ]\PP_{\mathfrak{s}_+} \big( {\color{black}X}^\delta_{\check{\tau}_1} ={\color{black}x}_{-k_{\mus}}\big)   .
\end{align*}
By symmetry of the system we have 
$\EE_{({\color{black}x}_{-k_{\mus}},0,0)}[\tau_1] = \EE_{({\color{black}x}_{k_{\mus}},0,0)}[\tau_1]$, so that
$\EE_{\mathfrak{s}_+} \big[ {\tau}_1 \big] 
=
\EE_{\mathfrak{s}_+} \big[ \check{\tau}_1 \big] 
+
\EE_{({\color{black}x}_{k_{\mus}},0,0)} \big[ {\tau}_1 \big] 
$ and we get
$$
\EE_{\mathfrak{s}_+} \big[ {\tau}_1 \big] =
\check{{\color{black}R}}(\mathfrak{s}_+)+{\color{black}R}({\color{black}x}_{k_{\mus}}),
$$
which completes the proof of the proposition.
\end{proof}

\subsection{Distributions of sticking and sliding periods}
\label{subsec:dist}%
The dynamics of the system consists of an alternate sequence of sticking periods and sliding (dynamic) periods.
Each sliding period starts from $\mathfrak{s}_\pm$ and the system is symmetric for the transform $({\color{black}x},{\color{black}y},v)\to(-{\color{black}x},-{\color{black}y},-v)$.
As a result the Laplace transform of the distribution of the duration of a sticking period is
\begin{equation}
F_{\rm stick}(\lambda )=\EE_{\mathfrak{s}_+} \big[ e^{- \lambda (\tau_1-\hat{\tau}_1)}\big]  .
\end{equation}
This Laplace transform can be evaluated as follows.
\begin{prop}
For $|k|\leq k_{\mus}$ and $\lambda>0$, 
let $\hat{P}_{k}({\color{black}x},{\color{black}y},v)$, $({\color{black}x},{\color{black}y},v)\in E$, and $F_{\lambda}({\color{black}x})$, ${\color{black}x} \in \{ {\color{black}x}_{-k_{\mus}-1},\ldots,{\color{black}x}_{k_{\mus}+1}\}$ be the solutions of 
\begin{align}
& {\cal L} \hat{P}_{k} = 0 \: \mbox{ in } \:E \backslash D^0 ,
\quad \quad \hat{P}_{k}({\color{black}x}_k,0,0)=1, \quad \quad \hat{P}_{k} =0\: \mbox{ in } \: D^0  \backslash \{({\color{black}x}_k,0,0)\}, \\
& \lambda F_{\lambda} - Q^\delta  F_{\lambda} = 0 \:\mbox{ in } \: \{ {\color{black}x}_{-k_{\mus}},\ldots,{\color{black}x}_{k_{\mus}}\}  \:  ,
\quad \quad  F_{\lambda}({\color{black}x}_{\pm (k_{\mus}+1)} )= 1.
\end{align}
Then we have $\PP_{\mathfrak{s}_+} \big( {\color{black}X}^\delta_{\hat{\tau}_1} ={\color{black}x}_k \big) = \hat{P}_{k}(\mathfrak{s}_+)$ for any $k\in \{-k_\mus,\ldots,k_\mus\}$,
$\EE_{({\color{black}x},0,0)} \big[ e^{-\lambda {\tau_1} } \big]= F_{\lambda}({\color{black}x})$ for any ${\color{black}x}\in \{ {\color{black}x}_{-k_{\mus}},\ldots,{\color{black}x}_{k_{\mus}}\} $ and $\lambda>0$,
 and
\begin{equation}
F_{\rm stick}(\lambda) = \sum_{k=-k_{\mus}}^{k_{\mus}} F_{\lambda}({\color{black}x}_k) \hat{P}_{k}(\mathfrak{s}_+)
.\end{equation}
\end{prop}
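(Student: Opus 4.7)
The plan is to prove the three identities separately and then assemble them via the strong Markov property applied at $\hat{\tau}_1$. The key decomposition is
\[
F_{\rm stick}(\lambda) = \EE_{\mathfrak{s}_+}\bigl[e^{-\lambda(\tau_1-\hat{\tau}_1)}\bigr] = \sum_{k=-k_{\mus}}^{k_{\mus}} \EE_{(\eta_k,0,0)}\bigl[e^{-\lambda \tau_1}\bigr]\,\PP_{\mathfrak{s}_+}\bigl(\eta^\delta_{\hat{\tau}_1}=\eta_k\bigr),
\]
which follows from the strong Markov property at $\hat{\tau}_1$: by definition of the latter, the process is located at some $(\eta_k,0,0)\in D^0$ at this time, and conditionally on $\eta^\delta_{\hat{\tau}_1}=\eta_k$ the remaining duration $\tau_1-\hat{\tau}_1$ has the law of $\tau_1$ under $\PP_{(\eta_k,0,0)}$. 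It therefore suffices to identify the hitting probabilities with $\hat{P}_k(\mathfrak{s}_+)$ and the conditional Laplace transforms with $F_\lambda(\eta_k)$.

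For the first identification I would set $\tilde{P}_k(\bz) \triangleq \PP_{\bz}(\eta^\delta_{\hat{\tau}_1}=\eta_k)$ for $\bz\in E$. Applying Dynkin's formula to the PDMP $\bZ^\delta$ stopped at $\hat{\tau}_1$ shows that $\tilde{P}_k$ is $\mathcal{L}$-harmonic on $E\setminus D^0$ and equals $\mathbf{1}_{\bz=(\eta_k,0,0)}$ on $D^0$, matching the system defining $\hat{P}_k$. The PDMP-specific boundary condition (\ref{eq:bcpdmp2}) is automatic because the instantaneous mode-switching prescribed by lines $(a)$--$(c)$ of (\ref{eq:defcalQ}) forces $\tilde{P}_k(\eta,\nu,0)$ to be independent of $\nu$ on $\partial E$. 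Uniqueness of the Dirichlet problem for $\mathcal{L}$ in the class of bounded functions then yields $\tilde{P}_k=\hat{P}_k$; uniqueness itself relies on Proposition~\ref{prop:31}, which guarantees that $\hat{\tau}_1\leq \tau_1$ is a.s.~finite starting from any point of $E$, so that any bounded $\mathcal{L}$-harmonic function on $E\setminus D^0$ vanishing on $D^0$ coincides, via Dynkin's formula, with its expectation at $\hat{\tau}_1$, which is zero.

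For the second identification I would observe that throughout the static phase one has $v^\delta_t\equiv 0$ and $\nu^\delta_t\equiv 0$, so that the PDMP reduces to the jump Markov process $\eta^\delta_t$ with generator $Q^\delta$ restricted to $\{\eta_{-k_{\mus}},\ldots,\eta_{k_{\mus}}\}$. In view of lines $(h)$--$(i)$ of (\ref{eq:defcalQ}), the exit time $\tau_1$ starting from $(\eta,0,0)$ with $|\eta|\leq \eta_{k_{\mus}}$ coincides with the first time $\eta^\delta_t$ hits $\{\pm\eta_{k_{\mus}+1}\}$, at which instant the state jumps to $\mathfrak{s}_\pm$. The standard Feynman--Kac identity for a finite-state continuous-time Markov chain then identifies $\EE_{(\eta,0,0)}[e^{-\lambda\tau_1}]$ with the unique bounded solution $F_\lambda$ of $\lambda F_\lambda-Q^\delta F_\lambda=0$ on the interior with Dirichlet value $1$ at $\eta_{\pm(k_{\mus}+1)}$. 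Plugging both representations into the strong-Markov decomposition above yields the announced formula for $F_{\rm stick}(\lambda)$.

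The main obstacle I expect is the careful verification that $\tilde{P}_k$ lies in the domain of $\mathcal{L}$ in the sense of Davis---namely absolute continuity along each flow $\Phi_{\eta,\nu}$ together with the boundary condition (\ref{eq:bcpdmp2})---before one can legitimately invoke Dynkin's formula and the Dirichlet-problem uniqueness argument. Once this domain membership is checked, the rest of the argument consists of routine applications of the strong Markov property and the Feynman--Kac formula for the finite-state chain $\eta^\delta_t$, and should present no further difficulty.
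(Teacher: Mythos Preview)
Your proposal is correct and follows essentially the same approach as the paper: the paper's proof consists solely of the strong Markov decomposition
\[
F_{\rm stick}(\lambda)=\EE_{\mathfrak{s}_+}\big[\EE[e^{-\lambda(\tau_1-\hat\tau_1)}\mid\hat\tau_1,\eta^\delta_{\hat\tau_1}]\big]=\sum_{k=-k_{\mus}}^{k_{\mus}}\PP_{\mathfrak{s}_+}(\eta^\delta_{\hat\tau_1}=\eta_k)\,\EE_{(\eta_k,0,0)}[e^{-\lambda\tau_1}],
\]
treating the identifications $\hat P_k(\mathfrak{s}_+)=\PP_{\mathfrak{s}_+}(\eta^\delta_{\hat\tau_1}=\eta_k)$ and $F_\lambda(\eta)=\EE_{(\eta,0,0)}[e^{-\lambda\tau_1}]$ as standard. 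Your Dynkin/Feynman--Kac justifications and the domain-membership check you flag are more detailed than what the paper provides, but they do not deviate from the intended route.
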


\begin{proof}
This is a consequence of the strong Markov property:
$$
F_{\rm stick}(\lambda) =
 \EE_{\mathfrak{s}_+} \big[ \EE[ e^{-\lambda (\tau_1-\hat{\tau}_1)} | \hat{\tau}_1 , {\color{black}X}^\delta_{\hat{\tau}_1} ] \big] 
=
\sum_{k=-k_{\mus}}^{k_{\mus}}  \PP_{\mathfrak{s}_+} \big( {\color{black}X}^\delta_{\hat{\tau}_1} ={\color{black}x}_k \big)  \EE_{({\color{black}x}_k,0,0)} \big[ e^{-\lambda \tau_1}  \big] .
$$
\end{proof}

Similarly the Laplace transform of the distribution of the duration of a sliding period is
\begin{equation}
F_{\rm slide}(\lambda )=\EE_{\mathfrak{s}_+} \big[ e^{- \lambda \hat{\tau}_1}\big]  ,
\end{equation}
and it can be expressed as follows.
\begin{prop}
\label{prop:laplacesliding}
For  $\lambda>0$, 
let $G_{\lambda}({\color{black}x},{\color{black}y},v)$, $({\color{black}x},{\color{black}y},v)\in E$ be the solution of
\begin{equation}
\lambda G_{\lambda} - {\cal L} G_{\lambda} = 0 \:\mbox{ in } \: E \backslash D^0   \:  ,
\quad \quad  G_{\lambda}= 1\:\mbox{ in } \: D^0   \:   .
\end{equation}
Then we have
\begin{equation}
F_{\rm slide}(\lambda )= G_\lambda(\mathfrak{s}_+).
\end{equation}
\end{prop}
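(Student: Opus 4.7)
The plan is to identify $F_{\rm slid}(\lambda)$ with the value at $\mathfrak{s}_+$ of the function $\tilde{G}_\lambda(\bz) \triangleq \EE_\bz [ e^{-\lambda T}]$, where $T \triangleq \inf \{ t \geq 0 : \bZ^\delta_t \in D^0 \}$ is the first hitting time of the static set $D^0$. Under $\PP_{\mathfrak{s}_+}$ one has $\tau_0 = 0$ almost surely, and $\hat{\tau}_1$ coincides with the first time $v^\delta_t = 0$ with $|\eta^\delta_t| \leq \mus$, hence $T = \hat{\tau}_1$ so that $\tilde{G}_\lambda(\mathfrak{s}_+) = F_{\rm slid}(\lambda)$. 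For $\bz \in D^0$, $T = 0$ trivially, so $\tilde{G}_\lambda(\bz) = 1$, which matches the boundary condition of the stated proposition.

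Next I would show that on $E \backslash D^0$ the function $\tilde{G}_\lambda$ solves $\lambda \tilde{G}_\lambda - {\cal L} \tilde{G}_\lambda = 0$. For this I use the Dynkin-type martingale formula for PDMPs from \cite{davis84}: if $\tilde{G}_\lambda$ belongs to the domain of ${\cal L}$, then
\begin{equation*}
M_t \triangleq e^{-\lambda t} \tilde{G}_\lambda(\bZ^\delta_t) - \tilde{G}_\lambda(\bZ^\delta_0) - \int_0^t e^{-\lambda s} ({\cal L} - \lambda) \tilde{G}_\lambda (\bZ^\delta_s) \, \textup{d} s
\end{equation*}
is a local martingale. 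Since $T \leq \hat{\tau}_1 \leq \tau_1$ is integrable by Proposition \ref{prop:31} and $e^{-\lambda t}$ keeps the integrand bounded, optional stopping at $T$ gives $\EE_\bz [ e^{-\lambda T} \tilde{G}_\lambda (\bZ^\delta_T)] = \tilde{G}_\lambda(\bz) + \EE_\bz \big[ \int_0^T e^{-\lambda s} ({\cal L} - \lambda) \tilde{G}_\lambda (\bZ^\delta_s)\, \textup{d} s \big]$. Because $\bZ^\delta_T \in D^0$ forces $\tilde{G}_\lambda(\bZ^\delta_T) = 1$, the left-hand side equals $\tilde{G}_\lambda(\bz)$, and letting $\bz$ vary in $E \backslash D^0$ together with standard localization yields $(\lambda - {\cal L}) \tilde{G}_\lambda = 0$ pointwise on $E \backslash D^0$. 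It remains to invoke uniqueness for the boundary value problem $\lambda G_\lambda - {\cal L} G_\lambda = 0$ in $E \backslash D^0$, $G_\lambda = 1$ on $D^0$: any bounded solution must, by a second application of the same Dynkin formula run up to $T$, satisfy $G_\lambda(\bz) = \EE_\bz [ e^{-\lambda T}] = \tilde{G}_\lambda(\bz)$, whence $F_{\rm slid}(\lambda) = G_\lambda(\mathfrak{s}_+)$.

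The main obstacle is checking that $\tilde{G}_\lambda$ genuinely lies in the domain of ${\cal L}$, that is, absolute continuity of $t \mapsto \tilde{G}_\lambda(\eta, \nu, \Phi_{\eta,\nu}(t,v))$ along the deterministic flows, together with the PDMP boundary condition (\ref{eq:bcpdmp2}) at $v = 0$. The first follows because between jumps the dependence of $\tilde{G}_\lambda$ on $v$ is governed by a first-order linear ODE driven by the flow of $B(\eta, \nu, \cdot)$ and an integral over future jumps, so smoothness in $v$ is inherited from the smoothness of $\Phi_{\eta, \nu}$. The matching condition (\ref{eq:bcpdmp2}) at $v = 0$ holds because $\tilde{G}_\lambda(\eta, \pm 1, 0) = \tilde{G}_\lambda(\eta, 0, 0)$ for $|\eta| \leq \mus$ (all three points are in $D^0$ where $\tilde{G}_\lambda \equiv 1$, or for $|\eta| > \mus$ the boundary transitions mandated by ${\cal Q}$ coincide with the jumps already built into the definition of the hitting time $T$). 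Once these technical checks are in place, the strong Markov and uniqueness arguments above conclude the proof.
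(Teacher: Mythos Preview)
Your proposal is correct. The paper states this proposition without proof, treating it as a direct consequence of the standard Feynman--Kac/Dynkin representation for the Laplace transform of a hitting time in the PDMP framework of \cite{davis84}, which is precisely the argument you supply.

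One streamlining remark: the forward step, where you deduce $(\lambda-{\cal L})\tilde G_\lambda=0$ pointwise from the vanishing of $\EE_\bz\big[\int_0^T e^{-\lambda s}({\cal L}-\lambda)\tilde G_\lambda(\bZ^\delta_s)\,\textup{d}s\big]$ for all $\bz$, is not quite immediate (it requires a short-time localization) and is in fact unnecessary. Since the proposition already posits a solution $G_\lambda$ of the boundary value problem, your uniqueness argument alone suffices: applying the Dynkin martingale formula to $G_\lambda$ stopped at $T$ gives $G_\lambda(\bz)=\EE_\bz[e^{-\lambda T}]=\tilde G_\lambda(\bz)$ directly, and evaluating at $\mathfrak{s}_+$ yields the result.
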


From (\ref{eq:defwlambda}) we also have
\begin{align}
\nonumber
w_\lambda(\mathfrak{s}_+, {\bf 1}_{E\backslash D^0})
&= \mathbb{E}_{\mathfrak{s}_+} \Big[ \int_0^{\tau_1} e^{-\lambda s} {\bf 1}_{ E\backslash D^0} ({\color{black}X}^\delta_s,{\color{black}Y}^\delta_s,{\color{black}V}^\delta_s) \textup{d} s \Big]  
= \mathbb{E}_{\mathfrak{s}_+} \Big[ \int_0^{\hat{\tau}_1} e^{-\lambda s} \textup{d} s \Big]\\ 
& = \frac{1}{\lambda}
\big( 1- \EE_{\mathfrak{s}_+} \big[ e^{-\lambda \hat{\tau}_1} \big] \big),
\end{align}
so that we get $F_{\rm slide}(\lambda )=1-  \lambda w_\lambda(\mathfrak{s}_+, {\bf 1}_{E\backslash D^0})$.



\section{Numerics}
In this section, we are interested in the numerical computation of the following statistics under the stationary measure:
\begin{equation}
\label{eq:defSi}
\hspace*{-0.15in}
S^1= P_{{\rm stick}}, \quad \quad S^2 = \EE_\pi[ ({\color{black}V}^\delta_t)^2], \quad \quad S^3 = \mathbb{P}_\pi \left ( |{\color{black}X}^\delta_t| \leq \mus \right ).
\end{equation}
We use two different methods. 
The first method is probabilistic and relies on the representation formula \eqref{eq:rep_excursion} of $S^i$ in terms of the excursions $\{ ({\color{black}X}^\delta_s,{\color{black}Y}^\delta_s,{\color{black}V}^\delta_s), \: s \in [0,\tau_1] \}$ starting from $\mathfrak{s}_+$.
$S^1$, resp. $S^2$ and $S^3$, has the form \eqref{eq:rep_excursion} with $f({\color{black}x},v) = \mathbf{1}_{ \{ v= 0, |{\color{black}x}| \leq \mus \} }$, resp. $f({\color{black}x},v) = v^2$ and $f({\color{black}x},v) = \mathbf{1}_{ \{ |{\color{black}x}| \leq \mus \} }$. The second method is deterministic and consists in solving the equation \eqref{plambda} with $f$ in the right-hand side
and we look for $\lambda u_\lambda(f)$ for small $\lambda$.

\subsection{The probabilistic method: simulation of the excursions of $({\color{black}X}^\delta_s,{\color{black}Y}^\delta_s,{\color{black}V}^\delta_s)$ on $[0,\tau_1]$}
\label{subsec:approachMC}
We generate a large number, say ${\color{black}M}$, of independent and identically distributed (i.i.d.)  versions of the excursions $\{ ({\color{black}X}^\delta_s,{\color{black}Y}^\delta_s,{\color{black}V}^\delta_s), \: s \in [0,\tau_1] \}$ where $({\color{black}X}^\delta_0,{\color{black}Y}^\delta_0,{\color{black}V}^\delta_0) = \mathfrak{s}_+$. 
From this family of excursions, we construct ${\color{black}M}$ i.i.d. versions $\{{\color{black}\boxi}^{(i)} \}_{i=1}^M$ of  
\begin{equation}
{\color{black}\boxi =(\xi_1,\xi_2)^T} = \Big( \int_0^{\tau_1} f({\color{black}X}^\delta_s,{\color{black}V}^\delta_s) \textup{d} s , \tau_1\Big)^T,
\end{equation}
whose empirical mean and covariance are denoted by $\color{black}{\hat{\boxi}^{(M)}}$ and ${\color{black}\hat{\bf C}^{(M)}}$ respectively:
{\color{black}
\begin{equation}
{\color{black}\hat \boxi}^{(M)} = \frac{1}{M} \sum_{i=1}^M \boxi^{(i)},\quad
\hat{\bf C}^{(M)}
=\frac{1}{M}\sum_{i=1}^M \boxi^{(i)} (\boxi^{(i)})^T - \hat \boxi^{(M)} (\hat{\boxi}^{(M)})^T.
\end{equation}
}
From the central limit theorem, we have the convergence in distribution of ${\color{black}\hat \boxi}^{(M)}$:
{\color{black}
\begin{equation}
\sqrt{M} \big(\hat \boxi^{(M)} - \EE[\boxi] \big)
\stackrel{M\to +\infty}{\longrightarrow} 
{\cal N}\big( {\bf 0}_{\mathbb{R}^2}, {\bf C} \big), 
\end{equation}
where ${\bf C} = \EE[\boxi \boxi^T] - \EE[\boxi]\EE[\boxi]^T$.}
By the delta method,
we get 
\begin{equation}
\sqrt{{\color{black}M}}  
\Big(
 \frac{ {\color{black} \hat{\xi}_1^{(M)} }  }{
 {\color{black}\hat{\xi}_2^{(M)} } }
-
\frac{ \EE[{\color{black}\xi_1}] }{\EE[{\color{black}\xi_2}]}\Big)
\stackrel{{\color{black}M} \to +\infty}{\longrightarrow} 
{\cal N} (  0,\sigma^2),\quad \quad 
\sigma^2 = \begin{pmatrix}
1/\EE[{\color{black}\xi_2}]\\
-\EE[{\color{black}\xi_1}]/\EE[{\color{black}\xi_2}]^2
\end{pmatrix}^T {\bf C}
\begin{pmatrix}
1/\EE[{\color{black}\xi_2}]\\
-\EE[{\color{black}\xi_1}]/\EE[{\color{black}\xi_2}]^2
\end{pmatrix}.
\end{equation}
By Slutsky's theorem,
\begin{equation}
\frac{ \sqrt{{\color{black}M}} }{ \hat{\sigma}^{\color{black}(M)} }
\Big(
 \frac{ {\color{black} \hat{\xi}_1^{(M)} }  }{
 {\color{black}\hat{\xi}_2^{(M)} } }
-
\frac{ \EE[{\color{black} {\xi}_1 }] }{\EE[{\color{black}  {\xi}_2 }]}\Big)
\stackrel{{\color{black}M} \to +\infty}{\longrightarrow} 
{\cal N} (  0,1),\quad \quad 
(\hat{\sigma}^{{\color{black}M}})^2 = \begin{pmatrix}
1/{\color{black} \hat{\xi}_2^{(M)} }\\
-{\color{black} \hat{\xi}_1^{(M)} } / ({\color{black} \hat{\xi}_2^{(M)} })^2
\end{pmatrix}^T \hat{\bf C}_{\color{black}M}
\begin{pmatrix}
1/{\color{black} \hat{\xi}_2^{(M)} }\\
- {\color{black} \hat{\xi}_1^{(M)} }/ ({\color{black} \hat{\xi}_2^{(M)} })^2
\end{pmatrix}  .
\end{equation}
We can deduce from this convergence in distribution an asymptotic 95 \% confidence interval for $\mathbb{E} [{\color{black}\xi_1}]/\EE[{\color{black}\xi_2}]$ (which is the quantity of interest by (\ref{eq:rep_excursion})):
\begin{equation}
\PP\left(
\frac{\mathbb{E} [{\color{black}\xi_1}]}{\mathbb{E} [{\color{black}\xi_2}] } \in \Big( 
\frac{\color{black} \hat{\xi}_1^{(M)} }{\color{black} \hat{\xi}_2^{(M)} }  - 1.96 \hat{\sigma}^{\color{black}(M)} {\color{black}M}^{-\frac{1}{2}},
\frac{\color{black} \hat{\xi}_1^{(M)} }{\color{black} \hat{\xi}_2^{(M)} } + 1.96 \hat{\sigma}^{\color{black}(M)} {\color{black}M}^{-\frac{1}{2}}
\Big)
 \right)
 \stackrel{{\color{black}M} \to +\infty}{\longrightarrow} 0.95. 
\end{equation}

Note that
\begin{align}
\nonumber
{\color{black}\xi_1} =  & \sum \limits_{0 \leq i \leq i_{\tau_1} -1} \int_{T_i}^{T_{i+1}} f({\color{black}X}^\delta_{T_i},\Phi_{{\color{black}X}^\delta_{T_i},{\color{black}Y}^\delta_{T_i}}(s-T_i , {\color{black}V}^\delta_{T_i})) \textup{d} s\\ 
& + \int_{T_{i_{\tau_1}}}^{\tau_1} f({\color{black}X}^\delta_{T_{i_{\tau_1}}},\Phi_{{\color{black}X}^\delta_{T_{i_{\tau_1}}},{\color{black}Y}^\delta_{T_{i_{\tau_1}}}}( s-T_{i_{\tau_1}} , {\color{black}V}^\delta_{T_{i_{\tau_1}}})) \textup{d} s ,
\end{align}
where $i_{\tau_1} = \max \{ i, \: T_i \leq \tau_1 \}.$
If $f$ does not depend on $v$ then the formula above becomes simple
\begin{align}
\int_0^{\tau_1} f({\color{black}X}^\delta_s) \textup{d} s  
= \sum \limits_{0 \leq i \leq i_{\tau_1}-1} f({\color{black}X}^\delta_{T_i}) (T_{i+1}-T_i) 
+ f({\color{black}X}^\delta_{T_{i_{\tau_1}}}) (\tau_1 - T_{i_{\tau_1}}).
\end{align}
With the particular choice of $f({\color{black}x},v) = \mathbf{1}_{\left \{ v= 0, \: | {\color{black}x} | \leq \mus \right \}}$, the formula remains simple as well
\begin{align}
\int_0^{\tau_1} f({\color{black}X}^\delta_s,{\color{black}V}^\delta_s) \textup{d} s  
= \sum \limits_{0 \leq i \leq i_{\tau_1}-1} f({\color{black}X}^\delta_{T_i},{\color{black}V}^\delta_{T_i}) (T_{i+1}-T_i) 
+ f({\color{black}X}^\delta_{T_{i_{\tau_1}}},{\color{black}V}^\delta_{T_{i_{\tau_1}}}) (\tau_1 - T_{i_{\tau_1}}).
\end{align}
When $f({\color{black}x},v) = v^2$ then it becomes slightly more complicated
\begin{align}
\nonumber
\int_0^{\tau_1} f({\color{black}X}^\delta_s,{\color{black}V}^\delta_s) \textup{d} s  
= &  \sum \limits_{0 \leq i \leq i_{\tau_1}-1} \int_{T_i}^{T_{i+1}} \big( \Phi_{{\color{black}X}^\delta_{T_i},{\color{black}Y}^\delta_{T_i}}(s-T_i , {\color{black}V}^\delta_{T_i}) \big)^2 \textup{d} s \mathbf{1}_{\{ |{\color{black}X}^\delta_{T_i}|=1\}}\\
& +\int_{T_{i_{\tau_1}}}^{\tau_1} \big( \Phi_{{\color{black}X}^\delta_{T_{i_{\tau_1}}},{\color{black}Y}^\delta_{T_{i_{\tau_1}}}}( s-T_{i_{\tau_1}} , {\color{black}V}^\delta_{T_{i_{\tau_1}}}) \big)^2 \textup{d} s \mathbf{1}_{\{ |{\color{black}Y}^\delta_{T_{i_{\tau_1}}}|=1\}}.
\end{align}
If { \color{black} $\mathfrak{b}(v)=-v$}, then each of the integrals in the right-hand side can be computed explicitly:
\begin{align}
\nonumber
\int_0^{\tau_1} f({\color{black}X}^\delta_s,{\color{black}V}^\delta_s) \textup{d} s  = &
\sum \limits_{0 \leq i \leq i_{\tau_1}-1} \Psi(T_i,T_{i+1};{\color{black}X}^\delta_{T_i},{\color{black}Y}^\delta_{T_i},{\color{black}V}^\delta_{T_i}) \mathbf{1}_{\{ |{\color{black}Y}^\delta_{T_i}|=1\}}
\\
& + \Psi(T_{i_{\tau_1}},\tau_1;{\color{black}X}^\delta_{T_{i_{\tau_1}}},{\color{black}Y}^\delta_{T_{i_{\tau_1}}},{\color{black}V}^\delta_{T_{i_{\tau_1}}}) \mathbf{1}_{\{ |{\color{black}X}^\delta_{T_{i_{\tau_1}}}|=1\}}  ,
\end{align}
with
$\Psi(\theta,\theta';{\color{black}x},{\color{black}y},v) = (\theta'-\theta) ({\color{black}x}-{\color{black}y} \mud)^2 + 0.5 (v-{\color{black}x}+{\color{black}y} \mud)^2 (1-e^{-2(\theta'-\theta)}) 
+ 2 ({\color{black}x}-{\color{black}y} \mud)(v-{\color{black}x}+{\color{black}y} \mud) (1-e^{-(\theta'-\theta)}).$
In Figure \ref{fig:probabilisticresults}, an estimation of the three quantities $S^i$ as functions of $\delta$ is presented together with the error bars using the probabilistic method and the formulas above. 
\begin{figure}[h!]
\centering
\begin{tikzpicture}[scale=0.49]
\begin{axis}[legend style={at={(0,1)},anchor=north west}, compat=1.3,
  xmin=0.03, xmax=.5,ymin=0.10,ymax=0.6,xmode = log, log basis x = {2},
  xlabel= {$\delta$},
  ylabel= {$P_\textup{stick}$}]
\addplot[dashdotted,mark=triangle*,mark size=2pt,mark options={color=black}] table [x index=0, y index=3]{casea_pdmp_friction_data6.txt};
\addplot[solid,mark size=1pt,mark options={color=black}] table [x index=0, y index=4]{casea_pdmp_friction_data6.txt};
\addplot[solid,mark size=1pt,mark options={color=black}] table [x index=0, y index=5]{casea_pdmp_friction_data6.txt};
\addplot[dashdotted,mark=triangle*,mark size=2pt,mark options={color=black}] table [x index=0, y index=3]{caseb_pdmp_friction_data6.txt};
\addplot[solid,mark size=1pt,mark options={color=black}] table [x index=0, y index=4]{caseb_pdmp_friction_data6.txt};
\addplot[solid,mark size=1pt,mark options={color=black}] table [x index=0, y index=5]{caseb_pdmp_friction_data6.txt};
\addplot[dashdotted,mark=triangle*,mark size=2pt,mark options={color=black}] table [x index=0, y index=3]{casec_pdmp_friction_data6.txt};
\addplot[solid,mark size=1pt,mark options={color=black}] table [x index=0, y index=4]{casec_pdmp_friction_data6.txt};
\addplot[solid,mark size=1pt,mark options={color=black}] table [x index=0, y index=5]{casec_pdmp_friction_data6.txt};
\addplot[dashdotted,mark=triangle*,mark size=2pt,mark options={color=black}] table [x index=0, y index=3]{cased_pdmp_friction_data6.txt};
\addplot[solid,mark size=1pt,mark options={color=black}] table [x index=0, y index=4]{cased_pdmp_friction_data6.txt};
\addplot[solid,mark size=1pt,mark options={color=black}] table [x index=0, y index=5]{cased_pdmp_friction_data6.txt};
\end{axis}
\end{tikzpicture}
\begin{tikzpicture}[scale=0.49]
\begin{axis}[legend style={at={(0,1)},anchor=north west}, compat=1.3,
  xmin=0.03, xmax=.5,ymin=0.03,ymax=0.352, xmode = log, log basis x = {2},
  xlabel= {$\delta$},
  ylabel= {$\mathbb{E}[{\color{black}V^\delta}^2]$}]
\addplot[dashdotted,mark=triangle*,mark size=2pt,mark options={color=black}] table [x index=0, y index=12]{casea_pdmp_friction_data6.txt};
\addplot[solid,mark size=1pt,mark options={color=black}] table [x index=0, y index=13]{casea_pdmp_friction_data6.txt};
\addplot[solid,mark size=1pt,mark options={color=black}] table [x index=0, y index=14]{casea_pdmp_friction_data6.txt};
\addplot[dashdotted,mark=triangle*,mark size=2pt,mark options={color=black}] table [x index=0, y index=12]{caseb_pdmp_friction_data6.txt};
\addplot[solid,mark size=1pt,mark options={color=black}] table [x index=0, y index=13]{caseb_pdmp_friction_data6.txt};
\addplot[solid,mark size=1pt,mark options={color=black}] table [x index=0, y index=14]{caseb_pdmp_friction_data6.txt};
\addplot[dashdotted,mark=triangle*,mark size=2pt,mark options={color=black}] table [x index=0, y index=12]{casec_pdmp_friction_data6.txt};
\addplot[solid,mark size=1pt,mark options={color=black}] table [x index=0, y index=13]{casec_pdmp_friction_data6.txt};
\addplot[solid,mark size=1pt,mark options={color=black}] table [x index=0, y index=14]{casec_pdmp_friction_data6.txt};
\addplot[dashdotted,mark=triangle*,mark size=2pt,mark options={color=black}] table [x index=0, y index=12]{cased_pdmp_friction_data6.txt};
\addplot[solid,mark size=1pt,mark options={color=black}] table [x index=0, y index=13]{cased_pdmp_friction_data6.txt};
\addplot[solid,mark size=1pt,mark options={color=black}] table [x index=0, y index=14]{cased_pdmp_friction_data6.txt};
\end{axis}
\end{tikzpicture}
\begin{tikzpicture}[scale=0.49]
\begin{axis}[legend style={at={(0,1)},anchor=north west}, compat=1.3,
  xmin=0.03125, xmax=0.5,ymin=0.686,ymax=0.793, xmode = log, log basis x = {2},
  xlabel= {$\delta$},
  ylabel= {$\mathbb{P} \left ( |{\color{black}X^\delta}^2| \leq \mus \right )$}]
\addplot[dashdotted,mark=triangle*,mark size=2pt,mark options={color=black}] table [x index=0, y index=6]{casea_pdmp_friction_data6.txt};
\addplot[solid,mark size=1pt,mark options={color=black}] table [x index=0, y index=7]{casea_pdmp_friction_data6.txt};
\addplot[solid,mark size=1pt,mark options={color=black}] table [x index=0, y index=8]{casea_pdmp_friction_data6.txt};
\addplot[color=red, thick, domain=0.4375:0.5625]{0.789886474609376};1
\addplot[color=red, thick, domain=0.21875:0.28125]{0.739564523423343};2
\addplot[color=red, thick, domain=0.109375:0.140625]{0.712010713468334};3
\addplot[color=red, thick, domain=0.0546875:0.0703125]{0.697578849841333};4
\addplot[color=red, thick, domain=0.02734375:0.03515625]{0.690192311713429};5

\end{axis}
\end{tikzpicture}
\caption{Monte Carlo estimations of $P_{\rm stick},\mathbb{E}[ {\color{black}V^\delta}^2], \mathbb{P}(|{\color{black}X^\delta}|\leq \mus)$ versus $\delta$ (taking values $2^{-j}$, $j=1,\ldots,5$) based on $N = 10^6$ simulated excursions of $({\color{black}X}^\delta,{\color{black}Y}^\delta,{\color{black}V}^\delta)$. 
Here $\mus=1$ { \color{black} and $\mathfrak{b}(v) = - v$}.   In the subfigure for $P_{\rm stick}$, the four curves from bottom to top correspond to $\mud = \frac{1}{4}$, $\frac{1}{2}$, $\frac{3}{4}$, and $1$ respectively whereas in the subfigure for $\mathbb{E}[ {\color{black}V^\delta}^2]$ the order is reversed. In the subfigure for $\mathbb{P}(|{\color{black}X^\delta} | \leq \mus)$, the red segments represent the numerical results associated with finding the invariant measure of ${\color{black}X^\delta}$ in the left kernel of $Q^\delta$ and then computing the targeted statistics. The asymptotic 95 \% confidence intervals are  represented by two solid lines around the estimated results.
\label{fig:probabilisticresults}}
\end{figure}

\subsection{The deterministic method: discretization in the $v$-axis of the $\lambda$-problem}
To numerically approximate the solution of \eqref{plambda}, we use a finite difference scheme where only the $v$-axis is discretized.
We consider a two-dimensional grid, for any $p \in \mathbb{N}^\star$, with $I = 2N +1$ and $J = 2 N p +1$,
\begin{equation}
\mathcal{G}_p = \left \{ ({\color{black}x}_i,v_j) = \Big( (i-N-1) \delta, (j-N p-1) \frac{\delta}{p} \Big), 
\: 1 \leq i \leq I,
\: 1 \leq j \leq J  
\right \} .
\end{equation}
The number of points in the grid $\mathcal{G}_p$ is $N_p = (2N+1)(2N p + 1) \sim 4N^2 p$ as $N \to \infty$.
The numerical approximation of $u_\lambda({\color{black}x}_i,\Theta({\color{black}x}_i,v_j),v_j)$ is denoted by $u_{ij}$ and the corresponding vector collecting the unknowns is $\boldsymbol{u}$. We also use the notation $f_{ij}$ for $f({\color{black}x}_i,\Theta({\color{black}x}_i,v_j),v_j)$ and $\boldsymbol{f}$ for the corresponding vector. 
We use a standard finite difference scheme in the $v$ direction: {\color{black} when $j \neq N p +1 (v_j \neq 0)$ or $|i - N - 1| > k_{\mus}$,}
\begin{equation}
\label{eq:discreteproblem1}
\lambda u_{ij} - ({\bf K} \boldsymbol{u})_{ij} - ( {\bf J} \boldsymbol{u})_{ij} =  {f}_{ij} 
\end{equation}
otherwise when  { \color{black}$j = N p +1$ ($v_j = 0$) and $|i - N - 1| \leq k_{\mus}$}
\begin{equation}
\label{eq:discreteproblem2}
\lambda u_{ij} - ({\bf J} \boldsymbol{u})_{ij} =  {f}_{ij},
\end{equation}
with 
$
({\bf J} \boldsymbol{u})_{ij} = 2\tau^{-2} \delta^{-2} \left ( \alpha_i u_{i+1j} - u_{ij} + (1-\alpha_i) u_{i-1j} \right )
$, $\alpha_i = \alpha({\color{black}x}_i)$, 
and 
\begin{equation}
({\bf K} \boldsymbol{u})_{ij} = p \max (0,B_{ij}) \left ( \frac{u_{ij+1}-u_{ij}}{\delta} \right ) + p \min (0,B_{ij}) \left ( \frac{u_{ij}-u_{ij-1}}{\delta} \right ),
\end{equation}
with $B_{ij}=B(x_i,{\color{black}\Theta(x_i,v_j)},v_j)$.
This results in a linear system to be solved of the form $(\lambda {\bf I} - {\bf M}) \boldsymbol{u} = \boldsymbol{f}$
where both ${\bf I}$ and ${\bf M}$ are $N_p \times N_p$ sparse matrices, ${\bf I}$ is the identity matrix and ${\bf M}$ is a sparse matrix with at most five nonzero entries per row.
The computational time spent to find $\boldsymbol{u}$ corresponds essentially to the $LU$ factorization of the matrix $\lambda {\bf I}-{\bf M}$ associated with the system \eqref{eq:discreteproblem1}-\eqref{eq:discreteproblem2}. We employ the MATLAB procedure $lu(.)$ which seeks five invertible matrices ${\bf L},{\bf U},{\bf P},{\bf Q},{\bf D}$ where ${\bf L}$ aud ${\bf U}$ are resp. lower and upper triangular such that $\lambda {\bf I}-{\bf M}={\bf D}{\bf P}^{-1}{\bf L}{\bf U}{\bf Q}^{-1}.$ 
As shown in Figure~\ref{fig4}, for $p$ large enough we recover the results of the probabilistic approach of Subsection~\ref{subsec:approachMC}.

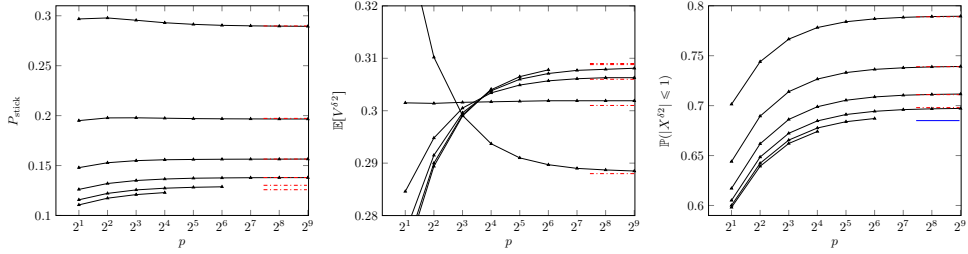
\begin{figure}[h!]
\centering
\begin{tikzpicture}[scale=0.49]
\begin{axis}[legend style={at={(0,1)},anchor=north west}, compat=1.3,
  xmode = log, log basis x = {2}, xmin=0, xmax=520,ymin=0.1,ymax=0.31,
  xlabel= {$p$},
  ylabel= {$P_{\rm stick}$}]
\addplot[color=black,mark=triangle,mark size=1pt] table [x index=0, y index=4]{casea_lambdaproblem_delta0.5_data.txt};
\addplot[color=red, thick, dashdotted, domain=175:500]{0.2898};
\addplot[color=black,mark=triangle,mark size=1pt] table [x index=0, y index=4]{casea_lambdaproblem_delta0.25_data.txt};
\addplot[color=red, thick, dashdotted, domain=175:500]{0.1972};
\addplot[color=black,mark=triangle,mark size=1pt] table [x index=0, y index=4]{casea_lambdaproblem_delta0.125_data.txt};
\addplot[color=red, thick, dashdotted, domain=175:500]{0.1565};
\addplot[color=black,mark=triangle,mark size=1pt] table [x index=0, y index=4]{casea_lambdaproblem_delta0.0625_data.txt};
\addplot[color=red, thick, dashdotted, domain=175:500]{0.1380};
\addplot[color=black,mark=triangle,mark size=1pt] table [x index=0, y index=4]{casea_lambdaproblem_delta0.03125_data.txt};
\addplot[color=red, thick, dashdotted, domain=175:500]{0.1302};
\addplot[color=black,mark=triangle,mark size=1pt] table [x index=0, y index=4]{casea_lambdaproblem_delta0.015625_data.txt};
\addplot[color=red, thick, dashdotted, domain=175:500]{0.1258};
\end{axis}
\end{tikzpicture}
\begin{tikzpicture}[scale=0.49]
\begin{axis}[legend style={at={(0,1)},anchor=north west}, compat=1.3,
  xmode = log, log basis x = {2}, xmin=0, xmax=520,ymin=0.28,ymax=0.32,
  xlabel= {$p$},
  ylabel= {$\mathbb{E}[{\color{black}V^\delta}^2]$}]
\addplot[color=black,mark=triangle,mark size=1pt] table [x index=0, y index=7]{casea_lambdaproblem_delta0.5_data.txt}; 
\addplot[color=red, thick, dashdotted, domain=175:2048]{0.288};
\addplot[color=black,mark=triangle,mark size=1pt] table [x index=0, y index=7]{casea_lambdaproblem_delta0.25_data.txt}; 
\addplot[color=red, thick, dashdotted, domain=175:2048]{0.301};
\addplot[color=black,mark=triangle,mark size=1pt] table [x index=0, y index=7]{casea_lambdaproblem_delta0.125_data.txt}; 
\addplot[color=red, thick, dashdotted, domain=175:2048]{0.306};
\addplot[color=black,mark=triangle,mark size=1pt] table [x index=0, y index=7]{casea_lambdaproblem_delta0.0625_data.txt}; 
\addplot[color=red, thick, dashdotted, domain=175:2048]{0.309};
\addplot[color=black,mark=triangle,mark size=1pt] table [x index=0, y index=7]{casea_lambdaproblem_delta0.03125_data.txt}; 
\addplot[color=red, thick, dashdotted, domain=175:2048]{0.3088};
\addplot[color=black,mark=triangle,mark size=1pt] table [x index=0, y index=7]{casea_lambdaproblem_delta0.015625_data.txt}; 
\end{axis}
\end{tikzpicture}
\begin{tikzpicture}[scale=0.49]
\begin{axis}[legend style={at={(0,1)},anchor=north west}, compat=1.3,
  xmode = log, log basis x = {2}, xmin=0, xmax=520,ymin=0.59,ymax=0.8,
  xlabel= {$p$},
  ylabel= {$\mathbb{P}(|{\color{black}X^\delta}^2| \leq 1)$}]
\addplot[color=black,mark=triangle,mark size=1pt] table [x index=0, y index=5]{casea_lambdaproblem_delta0.5_data.txt}; 
\addplot[color=red, thick, dashdotted, domain=175:500]{0.789};
\addplot[color=black,mark=triangle,mark size=1pt] table [x index=0, y index=5]{casea_lambdaproblem_delta0.25_data.txt}; 
\addplot[color=red, thick, dashdotted, domain=175:500]{0.739};
\addplot[color=black,mark=triangle,mark size=1pt] table [x index=0, y index=5]{casea_lambdaproblem_delta0.125_data.txt}; 
\addplot[color=red, thick, dashdotted, domain=175:500]{0.711};
\addplot[color=black,mark=triangle,mark size=1pt] table [x index=0, y index=5]{casea_lambdaproblem_delta0.0625_data.txt}; 
\addplot[color=red, thick, dashdotted, domain=175:500]{0.698};
\addplot[color=black,mark=triangle,mark size=1pt] table [x index=0, y index=5]{casea_lambdaproblem_delta0.03125_data.txt}; 
\addplot[color=black,mark=triangle,mark size=1pt] table [x index=0, y index=5]{casea_lambdaproblem_delta0.015625_data.txt}; 
\addplot[color=blue, thick, domain=175:500]{0.685};
\end{axis}
\end{tikzpicture}
\caption{
Case $\mud=0.25\mus$ {\color{black} and $\mathfrak{b}(v) = -v$}.
Approximations of $P_{\rm stick}, \mathbb{E}[ {\color{black}V^\delta}^2] , \mathbb{P}(|{\color{black}X^\delta}|\leq 1)$ versus $2 \leq p \leq 2^9$ 
for several values of $\delta$ (from $2^{-1}$ halved successively until $2^{-6}$). The $p$-axis is represented in the scale of $\log$ base 2.
In red (the limit case in $p$), the Monte Carlo result (plotted in Figure \ref{fig:probabilisticresults}) is shown for comparison. In blue (the limit case in $p$), the theoretical value is plotted when available.
\label{fig4}}
\end{figure}

\paragraph{Empirical convergence rate w.r.t $p$}
If the finite difference scheme \eqref{eq:discreteproblem1}-\eqref{eq:discreteproblem2} is of order $\kappa$ then $\| u^p - u \| \leq Cp^{-\kappa}$ where $C$ is independent of $p$. Moreover, if there exists an $\epsilon >0$ such that $\| u^p - u \| = Cp^{-\kappa} + O(p^{-\kappa-\epsilon})$
then 
$
\| u^{2p} - u^p \| \| u^p - u^{\frac{p}{2}} \|^{-1} \approx 2^\kappa + O(p^{-\epsilon}).
$
With such a relation in mind, we test the convergence of the finite difference scheme by considering
$
\kappa(p) = \log_2 \left ( \| u^{2p} - u^p \| \| u^p - u^{\frac{p}{2}} \|^{-1} \right ).
$
In Table \ref{tab:kp}, we present a set of empirical estimations of $\kappa(p)$ in two cases. 
The data indicate that $\kappa(p) \sim 1$.
\begin{table}[h!]
\centering
{\scriptsize
\begin{tabular}{||c c c c c||} 
 \hline
$\delta = 2^{-1}$ & $\kappa(64)$ & $\kappa(128)$ & $\kappa(256)$ & $\kappa(512)$ \\ 
 \hline\hline
 $S^{1}$ & 0.881 & 0.939  & 0.976  & 0.997 \\
  $S^{2}$ & 0.997 & 0.999  & 0.996  & 0.986 \\
 $S^{3}$ & 0.997 & 0.999  & 1.002  & 1.010 \\
  \hline
\end{tabular}
\begin{tabular}{||c c c c c||} 
 \hline
$\delta = 2^{-2}$ & $\kappa(64)$ & $\kappa(128)$ & $\kappa(256)$ & $\kappa(512)$ \\ 
 \hline\hline
 $S^{1}$ & 0.865 & 0.941  & 0.995  & 0.988 \\
 $S^{2}$ & 0.995 & 0.996  & 0.992  & 0.998 \\
 $S^{3}$ & 0.994 & 1.000  & 1.010  & 1.001 \\
  \hline
\end{tabular}
}
\caption{Computation of $\kappa(p)$ when $\delta = 2^{-1}$ on the left, $\delta = 2^{-2}$ on the right. The method is empirically of order 1. { \color{black} Here $\mathfrak{b}(v) = -v$}}
\label{tab:kp}
\end{table}

\subsection{Discussion}
\paragraph{Power spectral density}
{
\color{black}We discuss the $\tau$ dependence of the PSD of the velocity $V^\delta$  and the corresponding linewidth (from which we obtain the correlation time). We rely on numerical investigation since explicit expressions are not known.
We calculate the PSD of the velocity $V^\delta$ by solving the equations \eqref{eq:expressSvomega} and \eqref{eq:hatphi1}.
The Monte Carlo method is used for verification (with $T=10^3$ and $10^4$ sample paths). Figure \ref{figX} shows numerical results of the PSD for different values of $\mud, \delta$ and $\tau$. For each value of $\mud$, when $\tau$ goes below $0.25$ the curves with three different values of $\delta$ become indistinguishable. This indicates that for $\delta$ small enough ($\leq 0.25$), the main driving parameter becomes $\tau$.}
\begin{figure}[h!]
\centering
\begin{tikzpicture}[scale=0.49]
\begin{loglogaxis}[legend style={at={(0,0)},anchor=south west}, compat=1.3,
  xmin=0.001, xmax=50,ymin=0.,ymax=1.5,
  xlabel= {$\omega$},
  ylabel= {$S_V(\omega)$}]
\addplot[thick, color=gray, densely dotted,mark size=1pt] table [x index=0, y index=1]{mud025/probab_psd_delta025_tau1_mud0.25.txt};
\addplot[forget plot, each nth point=5, only marks,mark=*,mark size=1pt] table [x index=0, y index=1]{mud025/determ_psd_delta025_tau1_mud0.25.txt};
\addlegendentry{$\tau=1$};
\addplot[thick, color=gray,dashdotted,mark=none,mark size=1pt] table [x index=0, y index=1]{mud025/probab_psd_delta025_tau05_mud0.25.txt};
\addplot[forget plot, each nth point=5, only marks,mark=*,mark size=1pt] table [x index=0, y index=1]{mud025/determ_psd_delta025_tau05_mud0.25.txt};
\addlegendentry{$\tau=2^{-1}$};
\addplot[thick,color=gray,mark size=1pt] table [x index=0, y index=1]{mud025/probab_psd_delta025_tau025_mud0.25.txt};
\addplot[forget plot, each nth point=5, only marks,mark=*,mark size=1pt] table [x index=0, y index=1]{mud025/determ_psd_delta025_tau025_mud0.25.txt};
\addlegendentry{$\tau=2^{-2}$};
 \node at (-1,-9) {\textcolor{black}{$\mud=0.25, \delta = 1$}};
\end{loglogaxis}
\end{tikzpicture}
\begin{tikzpicture}[scale=0.49]
\begin{loglogaxis}[legend style={at={(0,0)},anchor=south west}, compat=1.3,
  xmin=0.001, xmax=50,ymin=0.,ymax=1.5,
  xlabel= {$\omega$},
  ylabel= {$S_V(\omega)$}]
 \addplot[thick, color=gray,densely dotted,mark=none,mark size=1pt] table [x index=0, y index=1]{mud05/probab_psd_delta025_tau1_mud0.5.txt};
\addplot[forget plot, each nth point=5, only marks,mark=*,mark size=1pt] table [x index=0, y index=1]{mud05/determ_psd_delta025_tau1_mud0.5.txt};
\addlegendentry{$\tau=1$};
\addplot[thick, color=gray,dashdotted,mark=none,mark size=1pt] table [x index=0, y index=1]{mud05/probab_psd_delta025_tau05_mud0.5.txt};
\addplot[forget plot, each nth point=5, only marks,mark=*,mark size=1pt] table [x index=0, y index=1]{mud05/determ_psd_delta025_tau05_mud0.5.txt};
\addlegendentry{$\tau=2^{-1}$};
\addplot[thick,color=gray,mark=none,mark size=1pt] table [x index=0, y index=1]{mud05/probab_psd_delta025_tau025_mud0.5.txt};
\addplot[forget plot, each nth point=5, only marks,mark=*,mark size=1pt] table [x index=0, y index=1]{mud05/determ_psd_delta025_tau025_mud0.5.txt};
\addlegendentry{$\tau=2^{-2}$};
\node at (-1,-9) {\textcolor{black}{$\mud=0.5, \delta = 1$}};
\end{loglogaxis}
\end{tikzpicture}
\begin{tikzpicture}[scale=0.49]
\begin{loglogaxis}[legend style={at={(0,0)},anchor=south west}, compat=1.3,
  xmin=0.001, xmax=50,ymin=0.,ymax=1.5,
  xlabel= {$\omega$},
  ylabel= {$S_V(\omega)$}]
\addplot[thick, color=gray,densely dotted,mark=none,mark size=1pt] table [x index=0, y index=1]{mud1/probab_psd_delta025_tau1_mud1.txt};
\addplot[forget plot, each nth point=5, only marks,mark=*,mark size=1pt] table [x index=0, y index=1]{mud1/determ_psd_delta025_tau1_mud1.txt};
\addlegendentry{$\tau=1$};
\addplot[thick, color=gray,dashdotted,mark=none,mark size=1pt] table [x index=0, y index=1]{mud1/probab_psd_delta025_tau05_mud1.txt};
\addplot[forget plot, each nth point=5, only marks,mark=*,mark size=1pt] table [x index=0, y index=1]{mud1/determ_psd_delta025_tau05_mud1.txt};
\addlegendentry{$\tau=2^{-1}$};
\addplot[thick,color=gray,mark=none,mark size=1pt] table [x index=0, y index=1]{mud1/probab_psd_delta025_tau025_mud1.txt};
\addplot[forget plot, each nth point=5, only marks,mark=*,mark size=1pt] table [x index=0, y index=1]{mud1/determ_psd_delta025_tau025_mud1.txt};
\addlegendentry{$\tau=2^{-2}$};
\node at (-1,-9) {\textcolor{black}{$\mud=1, \delta = 1$}};
\end{loglogaxis}
\end{tikzpicture}

\begin{tikzpicture}[scale=0.49]
\begin{loglogaxis}[legend style={at={(0,0)},anchor=south west}, compat=1.3,
  xmin=0.001, xmax=50,ymin=0.,ymax=1.5,
  xlabel= {$\omega$},
  ylabel= {$S_V(\omega)$}]
  \addplot[thick, color=gray,densely dotted,mark size=1pt] table [x index=0, y index=1]{mud025/probab_psd_delta05_tau1_mud0.25.txt};
\addplot[forget plot, each nth point=5, only marks,mark=*,mark size=1pt] table [x index=0, y index=1]{mud025/determ_psd_delta05_tau1_mud0.25.txt};
\addlegendentry{$\tau=1$};
\addplot[thick, color=gray,dashdotted, mark=none,mark size=1pt] table [x index=0, y index=1]{mud025/probab_psd_delta05_tau05_mud0.25.txt};
\addplot[forget plot, each nth point=5, only marks,mark=*,mark size=1pt] table [x index=0, y index=1]{mud025/determ_psd_delta05_tau05_mud0.25.txt};
\addlegendentry{$\tau=2^{-1}$};
\addplot[thick,color=gray,mark size=1pt] table [x index=0, y index=1]{mud025/probab_psd_delta05_tau025_mud0.25.txt};
\addplot[forget plot, each nth point=5, only marks,mark=*,mark size=1pt] table [x index=0, y index=1]{mud025/determ_psd_delta05_tau025_mud0.25.txt};
\addlegendentry{$\tau=2^{-2}$};
\node at (-1,-9) {\textcolor{black}{$\mud=0.25, \delta = 2^{-1}$}};
\end{loglogaxis}
\end{tikzpicture}
\begin{tikzpicture}[scale=0.49]
\begin{loglogaxis}[legend style={at={(0,0)},anchor=south west}, compat=1.3,
  xmin=0.001, xmax=50,ymin=0.,ymax=1.5,
  xlabel= {$\omega$},
  ylabel= {$S_V(\omega)$}]
  \addplot[thick, color=gray,densely dotted, mark size=1pt] table [x index=0, y index=1]{mud05/probab_psd_delta05_tau1_mud0.5.txt};
\addplot[forget plot, each nth point=5, only marks,mark=*,mark size=1pt] table [x index=0, y index=1]{mud05/determ_psd_delta05_tau1_mud0.5.txt};
\addlegendentry{$\tau=1$};
\addplot[thick, color=gray,dashdotted,mark=none,mark size=1pt] table [x index=0, y index=1]{mud05/probab_psd_delta05_tau05_mud0.5.txt};
\addplot[forget plot, each nth point=5, only marks,mark=*,mark size=1pt] table [x index=0, y index=1]{mud05/determ_psd_delta05_tau05_mud0.5.txt};
\addlegendentry{$\tau=2^{-1}$};
\addplot[thick,color=gray,mark size=1pt] table [x index=0, y index=1]{mud05/probab_psd_delta05_tau025_mud0.5.txt};
\addplot[forget plot, each nth point=5, only marks,mark=*,mark size=1pt] table [x index=0, y index=1]{mud05/determ_psd_delta05_tau025_mud0.5.txt};
\addlegendentry{$\tau=2^{-2}$};
\node at (-1,-9) {\textcolor{black}{$\mud=0.5, \delta = 2^{-1}$}};
\end{loglogaxis}
\end{tikzpicture}
\begin{tikzpicture}[scale=0.49]
\begin{loglogaxis}[legend style={at={(0,0)},anchor=south west}, compat=1.3,
  xmin=0.001, xmax=50,ymin=0.,ymax=1.5,
  xlabel= {$\omega$},
  ylabel= {$S_V(\omega)$}]
  \addplot[thick, color=gray,densely dotted,mark size=1pt] table [x index=0, y index=1]{mud1/probab_psd_delta05_tau1_mud1.txt};
\addplot[forget plot, each nth point=5, only marks,mark=*,mark size=1pt] table [x index=0, y index=1]{mud1/determ_psd_delta05_tau1_mud1.txt};
\addlegendentry{$\tau=1$};
\addplot[thick, color=gray,dashdotted,mark=none,mark size=1pt] table [x index=0, y index=1]{mud1/probab_psd_delta05_tau05_mud1.txt};
\addplot[forget plot, each nth point=5, only marks,mark=*,mark size=1pt] table [x index=0, y index=1]{mud1/determ_psd_delta05_tau05_mud1.txt};
\addlegendentry{$\tau=2^{-1}$};
\addplot[thick,color=gray,mark size=1pt] table [x index=0, y index=1]{mud1/probab_psd_delta05_tau025_mud1.txt};
\addplot[forget plot, each nth point=5, only marks,mark=*,mark size=1pt] table [x index=0, y index=1]{mud1/determ_psd_delta05_tau025_mud1.txt};
\addlegendentry{$\tau=2^{-2}$};
\node at (-1,-9) {\textcolor{black}{$\mud=1, \delta = 2^{-1}$}};
\end{loglogaxis}
\end{tikzpicture}

\begin{tikzpicture}[scale=0.49]
\begin{loglogaxis}[legend style={at={(0,0)},anchor=south west}, compat=1.3,
  xmin=0.001, xmax=50,ymin=0.,ymax=1.5,
  xlabel= {$\omega$},
  ylabel= {$S_V(\omega)$}]
 \addplot[thick, color=gray,densely dotted,mark size=1pt] table [x index=0, y index=1]{mud025/probab_psd_delta1_tau1_mud0.25.txt};
\addplot[forget plot, each nth point=5, only marks,mark=*,mark size=1pt] table [x index=0, y index=1]{mud025/determ_psd_delta1_tau1_mud0.25.txt};
\addlegendentry{$\tau=1$};
\addplot[thick, color=gray,dashdotted,mark=none,mark size=1pt] table [x index=0, y index=1]{mud025/probab_psd_delta1_tau05_mud0.25.txt};
\addplot[forget plot, each nth point=5, only marks,mark=*,mark size=1pt] table [x index=0, y index=1]{mud025/determ_psd_delta1_tau05_mud0.25.txt};
\addlegendentry{$\tau=2^{-1}$};
\addplot[thick,color=gray,mark=none,mark size=1pt] table [x index=0, y index=1]{mud025/probab_psd_delta1_tau025_mud0.25.txt};
\addplot[forget plot, each nth point=5, only marks,mark=*,mark size=1pt] table [x index=0, y index=1]{mud025/determ_psd_delta1_tau025_mud0.25.txt};
\addlegendentry{$\tau=2^{-2}$};
\node at (-1,-9) {\textcolor{black}{$\mud=0.25, \delta = 2^{-2}$}};
\end{loglogaxis}
\end{tikzpicture}
\begin{tikzpicture}[scale=0.49]
\begin{loglogaxis}[legend style={at={(0,0)},anchor=south west}, compat=1.3,
  xmin=0.001, xmax=50,ymin=0.,ymax=1.5,
  xlabel= {$\omega$},
  ylabel= {$S_V(\omega)$}]
\addplot[thick, color=gray,densely dotted,mark size=1pt] table [x index=0, y index=1]{mud05/probab_psd_delta1_tau1_mud0.5.txt};
\addplot[forget plot, each nth point=5, only marks,mark=*,mark size=1pt] table [x index=0, y index=1]{mud05/determ_psd_delta1_tau1_mud0.5.txt};
\addlegendentry{$\tau=1$};
\addplot[thick, color=gray,dashdotted,mark=none,mark size=1pt] table [x index=0, y index=1]{mud05/probab_psd_delta1_tau05_mud0.5.txt};
\addplot[forget plot, each nth point=5, only marks,mark=*,mark size=1pt] table [x index=0, y index=1]{mud05/determ_psd_delta1_tau05_mud0.5.txt};
\addlegendentry{$\tau=2^{-1}$};
\addplot[thick,color=gray,mark size=1pt] table [x index=0, y index=1]{mud05/probab_psd_delta1_tau025_mud0.5.txt};
\addplot[forget plot, each nth point=5, only marks,mark=*,mark size=1pt] table [x index=0, y index=1]{mud05/determ_psd_delta1_tau025_mud0.5.txt};
\addlegendentry{$\tau=2^{-2}$};
\node at (-1,-9) {\textcolor{black}{$\mud=0.5, \delta = 2^{-2}$}};
\end{loglogaxis}
\end{tikzpicture}
\begin{tikzpicture}[scale=0.49]
\begin{loglogaxis}[legend style={at={(0,0)},anchor=south west}, compat=1.3,
  xmin=0.001, xmax=50,ymin=0.,ymax=1.5,
  xlabel= {$\omega$},
  ylabel= {$S_V(\omega)$}]
 \addplot[thick, color=gray,densely dotted,mark size=1pt] table [x index=0, y index=1]{mud1/probab_psd_delta1_tau1_mud1.txt};
\addplot[forget plot, each nth point=5, only marks,mark=*,mark size=1pt] table [x index=0, y index=1]{mud1/determ_psd_delta1_tau1_mud1.txt};
\addlegendentry{$\tau=1$};
\addplot[thick, color=gray,dashdotted,mark=none,mark size=1pt] table [x index=0, y index=1]{mud1/probab_psd_delta1_tau05_mud1.txt};
\addplot[forget plot, each nth point=5, only marks,mark=*,mark size=1pt] table [x index=0, y index=1]{mud1/determ_psd_delta1_tau05_mud1.txt};
\addlegendentry{$\tau=2^{-1}$};
\addplot[thick,color=gray,mark size=1pt] table [x index=0, y index=1]{mud1/probab_psd_delta1_tau025_mud1.txt};
\addplot[forget plot, each nth point=5, only marks,mark=*,mark size=1pt] table [x index=0, y index=1]{mud1/determ_psd_delta1_tau025_mud1.txt};
\addlegendentry{$\tau=2^{-2}$};
\node at (-1,-9) {\textcolor{black}{$\mud=1, \delta = 2^{-2}$}};
\end{loglogaxis}
\end{tikzpicture}
\caption{
\color{black}{Double logarithmic plot of the PSD of the process $V^\delta$ for $\tau = 2^{-i}, i=0,1, 2$ on each subfigure. The subfigures appear in the following order from left to right: $\mud  = \frac{1}{4},\frac{1}{2},1$ ($\mus = 1$) and from top to bottom: $\delta = 2^{-j}, j= 0, 1, 2$. Deterministic results (obtained by solving \eqref{eq:expressSvomega} and \eqref{eq:hatphi1}) are in  black dots and Monte Carlo simulation results are in gray. Here {\color{black} $\mathfrak{b}(v) = - v$}.}
\label{figX}}
\end{figure}
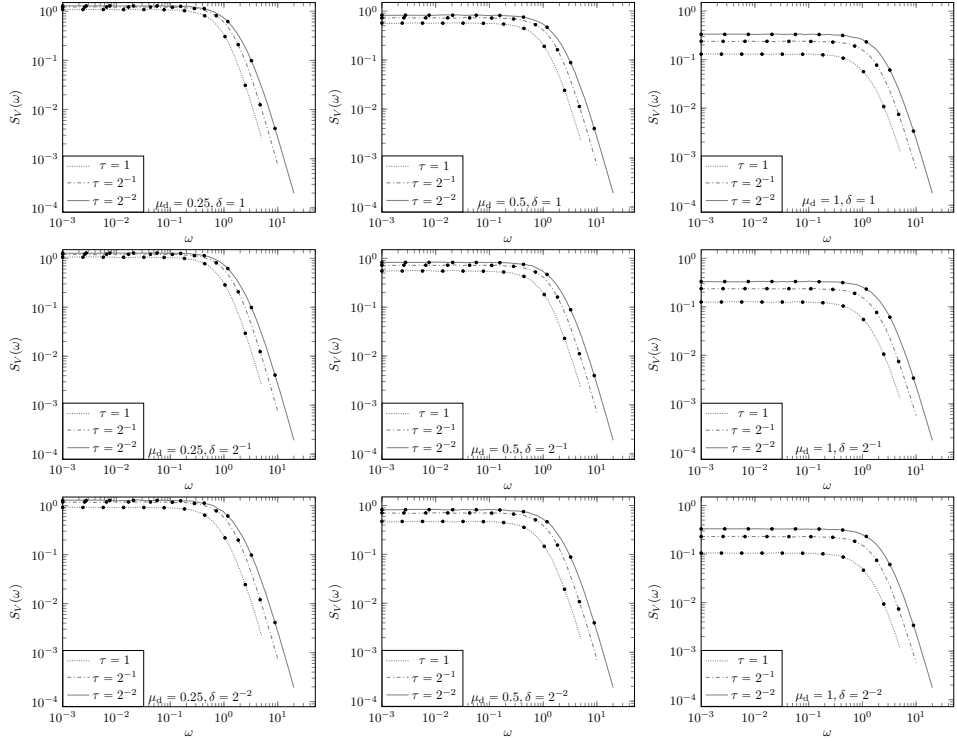
{
\color{black}Since the PSDs have well defined central peaks at $\omega=0$, we consider the full width at half maximum (FWHM) $\Delta \omega$ to define the correlation time of the system denoted by $t_{\rm corr } = 1/ \Delta \omega$. Figure \ref{figY} plots the correlation times of the process $V^\delta$ as functions of $\tau$ in the four cases $\mud \mus^{-1} = \frac{1}{4},\frac{1}{2},\frac{3}{4},1$.   This indicates that for $\tau \leq 0.125$, the correlation time becomes constant (when $\tau$ is small, we may think that the driving process $X$ behaves like a white noise; we then recover the observation that the correlation time essentially coincides with the value of the white-noise limit as long as $ \tau < 0.1$ \cite{GJ2017}).  In the same figure, in the black squares on the left, we observe the correlation time  of the process $V^\delta$  for $\mud=\mus$ and {\color{black} $\mathfrak{b}(v)=0$}. Again, this indicates that for $\tau \leq 0.125$, it becomes constant. }
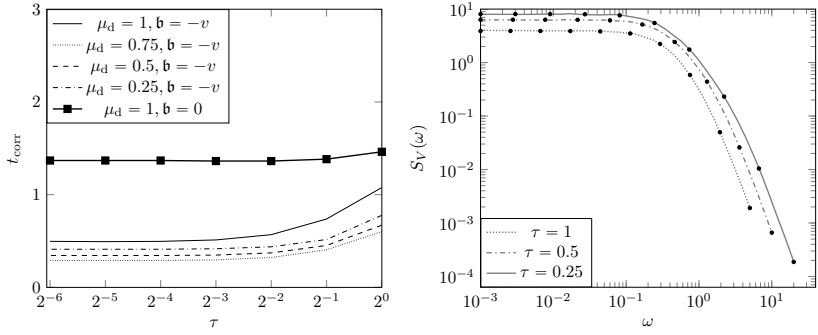
\begin{figure}[h!]
\centering
\begin{tikzpicture}[scale=0.65]
\begin{axis}[legend style={at={(0,1)},anchor=north west}, compat=1.3,
  xmin=0.015, xmax=1,ymin=0,ymax=3,xmode = log, log basis x = {2},
  xlabel= {$\tau$},
  ylabel= {$t_{\textup{corr}}$}]
\addplot[mark=none,mark size=2pt,mark options={color=black}] table [x index=0, y index=1]{tcorr,b=x-v_true.txt};
\addlegendentry{$\mud=1, {\color{black} \mathfrak{b} =-v}$};
\addplot[densely dotted, mark=none,mark size=2pt,mark options={color=black}] table [x index=0, y index=2]{tcorr,b=x-v_true.txt};
\addlegendentry{$\mud=0.75, {\color{black} \mathfrak{b} =-v}$};
\addplot[dashed,mark=none,mark size=2pt,mark options={color=black}] table [x index=0, y index=3]{tcorr,b=x-v_true.txt};
\addlegendentry{$\mud=0.5, {\color{black} \mathfrak{b} =-v}$};
\addplot[dashdotted, mark=none,mark size=2pt,mark options={color=black}] table [x index=0, y index=4]{tcorr,b=x-v_true.txt};
\addlegendentry{$\mud=0.25, {\color{black} \mathfrak{b} =-v}$};
\addplot[thick,mark=square*,mark size=2pt,mark options={color=black}] table [x index=0, y index=1]{tcorr,b=x_true.txt};
\addlegendentry{$\mud=1, {\color{black} \mathfrak{b} = 0}$};
\end{axis}
\end{tikzpicture}
\begin{tikzpicture}[scale=0.65]
\begin{loglogaxis}[legend style={at={(0,0)},anchor=south west}, compat=1.3,
  xmin=0.001, xmax=40,ymin=0.,ymax=10,
  xlabel= {$\omega$},
  ylabel= {$S_V(\omega)$}]
\addplot[thick, color=gray,densely dotted, mark size=1pt] table [x index=0, y index=1]{probab_psd_delta025_tau1_bx.txt};
\addplot[forget plot, only marks,mark=*, mark size=1pt] table [x index=0, y index=1]{determ_psd_delta025_tau1_bx.txt};
\addlegendentry{$\tau=1$};
\addplot[thick,color=gray,dashdotted, mark size=1pt] table [x index=0, y index=1]{probab_psd_delta025_tau05_bx.txt};
\addplot[forget plot, only marks,mark=*, mark size=1pt] table [x index=0, y index=1]{determ_psd_delta025_tau05_bx.txt};
\addlegendentry{$\tau=0.5$};
\addplot[thick, color=gray,mark size=1pt] table [x index=0, y index=1]{probab_psd_delta025_tau025_bx.txt};
\addplot[forget plot, only marks,mark=*, mark size=1pt] table [x index=0, y index=1]{determ_psd_delta025_tau025_bx.txt};
\addlegendentry{$\tau=0.25$};
\end{loglogaxis}
%
\end{tikzpicture}
\caption{
{\color{black}Left: Correlation time of the process $V^\delta$ w.r.t. noise correlation time $\tau$ for $\delta = 2^{-2}$, $\mus=1$, and {\color{black} $\mathfrak{b}(v)=-v$}. The results have been computed with the deterministic method. Right: Double logarithmic plot of the PSD of the process $V^\delta$ for $\tau = 2^{-i}, i=0,1, 2$, $\mud=\mus=1$, and $\delta = 2^{-2}$ with {\color{black} $\mathfrak{b}(v)=0$}. The black dots, resp. the gray lines, come from the deterministic method, resp.  the Monte Carlo method.}
\label{figY}}
\end{figure}

\paragraph{Durations of excursions, static and dynamique phases}
As shown in Figure \ref{fig:histo}, we observe two different behaviors for the pdf $f_{\textup{slide}}$ of the dynamic phase duration.
When $\mud < \mus$, $f_{\textup{slide}}$ vanishes at 0 and is very close to $0$ in its neighborhood.
This indicates the absence of short dynamic phases. 
When $\mud = \mus$, $f_{\textup{slide}}$ vanishes at $0$ but increases very fast. This indicates the presence of dramatically short dynamic phases. 
In all cases, the pdf $f_{\textup{stick}}$ of the static phase duration  is positive around 0 and  $f_{\textup{stick}}(0^+)$ is a finite positive number for fixed $\delta$. This indicates the presence of short static phases.
Finally, the behavior of the excursion is essentially inherited from the behavior of the dynamic phase.
{\color{black}
In Figure \ref{figZ}, we plot the Laplace transform of the duration of the dynamic phase obtained by the probabilistic (Monte Carlo) method and by the deterministic (Kolmogorov) method resulting from Proposition \ref{prop:laplacesliding}. This shows again that both methods give the same results.
}
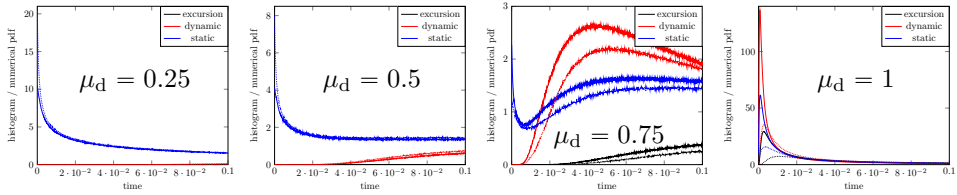
\begin{figure}[h!]
\centering
\begin{tikzpicture}[scale=0.37]
\begin{axis}[legend style={at={(1,1)},anchor=north east}, compat=1.3,
  xmin=0, xmax=0.1,ymin=0,ymax=21,
  xlabel= {time},
  ylabel= {histogram / numerical pdf}] 
\addplot[black,thick,mark size=2pt,mark options={color=red}] table [x index=0, y index=3]{casea_2m4_pdmp_friction_data8.txt}; 
\addlegendentry{excursion};
\addplot[red,thick,mark size=2pt,mark options={color=red}] table [x index=0, y index=2]{casea_2m4_pdmp_friction_data8.txt}; 
\addlegendentry{dynamic};
\addplot[blue,thick,mark size=2pt,mark options={color=red}] table [x index=0, y index=1]{casea_2m4_pdmp_friction_data8.txt}; 
\addlegendentry{static};
\addplot[black,densely dotted,thick,mark size=2pt,mark options={color=red}] table [x index=0, y index=3]{casea_2m5_pdmp_friction_data8.txt}; 
\addplot[red,densely dotted,thick,mark size=2pt,mark options={color=red}] table [x index=0, y index=2]{casea_2m5_pdmp_friction_data8.txt}; 
\addplot[blue,densely dotted,thick,mark size=2pt,mark options={color=red}] table [x index=0, y index=1]{casea_2m5_pdmp_friction_data8.txt}; 
\end{axis}
\node at (3.5cm,3cm) {\textcolor{black}{$\mud=0.25$}};
\end{tikzpicture}
\begin{tikzpicture}[scale=0.37]
\begin{axis}[legend style={at={(1,1)},anchor=north east}, compat=1.3,
  xmin=0, xmax=0.1,ymin=0,ymax=8.5,
  xlabel= {time},
  ylabel= {histogram / numerical pdf}] 
\addplot[black,thick,mark size=2pt,mark options={color=red}] table [x index=0, y index=3]{caseb_2m4_pdmp_friction_data8.txt}; 
\addlegendentry{excursion};
\addplot[red,thick,mark size=2pt,mark options={color=red}] table [x index=0, y index=2]{caseb_2m4_pdmp_friction_data8.txt}; 
\addlegendentry{dynamic};
\addplot[blue,thick,mark size=2pt,mark options={color=red}] table [x index=0, y index=1]{caseb_2m4_pdmp_friction_data8.txt}; 
\addlegendentry{static};
\addplot[black,densely dotted, mark size=2pt,mark options={color=red}] table [x index=0, y index=3]{caseb_2m5_pdmp_friction_data8.txt}; 
\addplot[red,densely dotted,thick,mark size=2pt,mark options={color=red}] table [x index=0, y index=2]{caseb_2m5_pdmp_friction_data8.txt}; 
\addplot[blue,densely dotted,thick,mark size=2pt,mark options={color=red}] table [x index=0, y index=1]{caseb_2m5_pdmp_friction_data8.txt}; 
\end{axis}
\node at (3.5cm,3cm) {\textcolor{black}{$\mud=0.5$}};
\end{tikzpicture}
\begin{tikzpicture}[scale=0.37]
\begin{axis}[legend style={at={(1,1)},anchor=north east}, compat=1.3,
  xmin=0, xmax=0.1,ymin=0,ymax=3,
  xlabel= {time},
  ylabel= {histogram / numerical pdf}] 
  \addplot[black,thick,mark size=2pt,mark options={color=red}] table [x index=0, y index=3]{casec_2m5_pdmp_friction_data8.txt}; 
\addlegendentry{excursion};
\addplot[red,thick,mark size=2pt,mark options={color=red}] table [x index=0, y index=2]{casec_2m5_pdmp_friction_data8.txt}; 
\addlegendentry{dynamic};
\addplot[blue,thick,mark size=2pt,mark options={color=red}] table [x index=0, y index=1]{casec_2m5_pdmp_friction_data8.txt}; 
\addlegendentry{static};
\addplot[black,densely dotted,thick,mark size=2pt,mark options={color=red}] table [x index=0, y index=3]{casec_2m4_pdmp_friction_data8.txt}; 
\addplot[red,densely dotted,thick,mark size=2pt,mark options={color=red}] table [x index=0, y index=2]{casec_2m4_pdmp_friction_data8.txt}; 
\addplot[blue,densely dotted,thick,mark size=2pt,mark options={color=red}] table [x index=0, y index=1]{casec_2m4_pdmp_friction_data8.txt}; 
\end{axis}
\node at (3.5cm,1cm) {\textcolor{black}{$\mud=0.75$}};
\end{tikzpicture}
\begin{tikzpicture}[scale=0.37]
\begin{axis}[legend style={at={(1,1)},anchor=north east}, compat=1.3,
  xmin=0, xmax=0.1,ymin=0,ymax=140,
  xlabel= {time},
  ylabel= {histogram / numerical pdf}] 
  \addplot[black,thick,mark size=2pt,mark options={color=red}] table [x index=0, y index=3]{cased_2m5_pdmp_friction_data8.txt}; 
\addlegendentry{excursion};
\addplot[red,thick,mark size=2pt,mark options={color=red}] table [x index=0, y index=2]{cased_2m5_pdmp_friction_data8.txt}; 
\addlegendentry{dynamic};
\addplot[blue,thick,mark size=2pt,mark options={color=red}] table [x index=0, y index=1]{cased_2m5_pdmp_friction_data8.txt}; 
\addlegendentry{static};
\addplot[black,densely dotted,thick,mark size=2pt,mark options={color=red}] table [x index=0, y index=3]{cased_2m4_pdmp_friction_data8.txt}; 
\addplot[red,densely dotted,thick,mark size=2pt,mark options={color=red}] table [x index=0, y index=2]{cased_2m4_pdmp_friction_data8.txt}; 
\addplot[blue,densely dotted,thick,mark size=2pt,mark options={color=red}] table [x index=0, y index=1]{cased_2m4_pdmp_friction_data8.txt}; 
\end{axis}
\node at (3.5cm,3cm) {\textcolor{black}{$\mud=1$}};
\end{tikzpicture}
\caption{\color{black}{Numerical pdf of the duration of the static phase $f_{\textup{stick}}$ (red), duration of the dynamic phase $f_{\textup{slide}}$ (blue) and duration of an excursion (black). Each curve corresponds to a $\delta$ which takes the values $2^{-4}$ (dotted line) and $2^{-5}$ (solid line). The pdf are numerically determined by Monte Carlo simulations involving $10^7$ excursions. {\color{black} Here $\mathfrak{b}(v)=-v$}.}
\label{fig:histo}}
\end{figure}
\begin{figure}[h!]
\centering
\begin{tikzpicture}[scale=0.6]
\begin{loglogaxis}[legend style={at={(0,0)},anchor=south west}, compat=1.3,
  xmin=0.1, xmax=10,ymin=0.,ymax=1.,
  xlabel= {
  $\lambda$},
  ylabel= {$F_{\rm slide}(\lambda)$
  }]
\addplot[forget plot,only marks, mark=*, mark size=1pt] table [x index=0, y index=2]{resD_delta_1.0000_10.txt};
\addplot[color=gray, thick, mark size=1pt] table [x index=0, y index=2]{probab_laplace_delta1.000_tau1.000_mud1.txt};
\addlegendentry{$\mud=1$};
\addplot[forget plot,only marks, mark=*, mark size=1pt] table [x index=0, y index=2]{resC_delta_1.0000_10.txt};
\addplot[dashed,color=gray, thick, mark size=1pt] table [x index=0, y index=2]{probab_laplace_delta1.000_tau1.000_mud075.txt};
\addlegendentry{$\mud=0.75$};
\addplot[forget plot,only marks, mark=*,mark size=1pt] table [x index=0, y index=2]{resB_delta_1.0000_10.txt};
\addplot[densely dotted, color=gray, thick, mark size=1pt] table [x index=0, y index=2]{probab_laplace_delta1.000_tau1.000_mud05.txt};
\addlegendentry{$\mud=0.5$};
\addplot[forget plot,only marks, mark=*,mark size=1pt] table [x index=0, y index=2]{resA_delta_1.0000_10.txt};
\addplot[dotted, color=gray, thick, mark size=1pt] table [x index=0, y index=2]{probab_laplace_delta1.000_tau1.000_mud025.txt};
\addlegendentry{$\mud=0.25$};
\end{loglogaxis}
\end{tikzpicture}
\caption{
\color{black}{Laplace transform $F_{\rm slide}(\lambda)$ of the dynamic duration. The black dots result from the deterministic method whereas the gray lines from the Monte Carlo method. Here $\delta=1, \tau=1,\mus=1$. {\color{black} Here $\mathfrak{b}(v)=-v$}.}
\label{figZ}}
\end{figure}
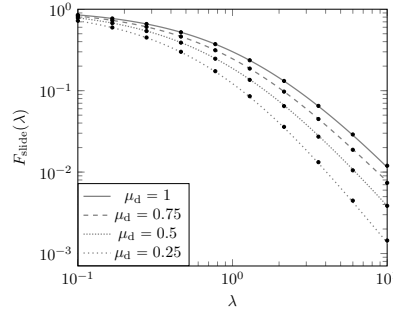

\begin{table}[h!]
\centering
{\scriptsize
\begin{tabular}{||c| r r r||} 
 \hline
 \diagbox{$\mud \mus^{-1}$}{$\delta$} & $2^{-3}$ &  $2^{-4}$ & $2^{-5}$ \\ [0.5ex] 
 \hline\hline
0.25 & 5.32/6.73 & 10.83/10.51 & 21.81/20.94\\ 
0.50 & 2.20/2.78 & 4.26/4.10 & 8.36/7.98\\
0.75 & 0.68/0.89 & 1.24/1.17 & 2.36/2.27\\
1.00 & {0.17/0.18} & 0.41/0.76 & 1.12/9.20\\
\hline
\end{tabular}
}
\caption{Estimation of $f_{\textup{stick}} (0^+)$ using Kolmogorov (left numbers) and Monte Carlo (right numbers) methods. {\color{black} Here $\mathfrak{b}(v)=-v$}.}
\label{table:1}
\end{table}

\paragraph{Comments on the cases $\mud < \mus$}
\noindent Using Monte Carlo and Kolmogorov methods, we can estimate $f_{\textup{stick}} (0^+)$ (shown in Table \ref{table:1}) and $f_{\textup{slide}} (0^+)$. 
For $\mud = 0.25,0.5$ and $0.75$ and for $\delta = 2^{-k}, k = 3 ,\dots ,6$, the computed numbers are positive and finite. Both methods agree qualitatively. In addition, we observe that the calculated values are multiplied by two when the parameter $\delta$ is divided by two. It seems to indicate that $f_{\textup{stick}}(0^+) \uparrow \infty$ as $\delta \downarrow 0$. Besides, both methods indicate that $f_{\textup{slide}} (0^+)=0$. Furthermore the empirical histograms from the MC method reveal that $f_{\textup{slide}} $ vanishes in the neighborhood of $0$. 

\paragraph{Comments on the case $\mud = \mus$}
Using the Kolmogorov method, we can estimate $f_{\textup{stick}} (0^+)$ (shown in the last row of Table \ref{table:1}) for $\delta = 2^{-k},k = 3 ,\dots ,6$, the computed numbers are positive and finite but they do not agree very well with the Monte Carlo method. This may be due to the fact that the slope of $f_{\textup{stick}}$ is significantly steep in the neighborhood of 0. 
We can notice that the MC value overestimates the  value given by the discretized Kolmogorov equations. With the Kolmogorov method, we also show that $f_{\textup{slide}}(0^+)=0$ but the convergence is rather slow in finding the limit of $\lambda F_{\textup{slide}}(\lambda)$ as $\lambda \to +\infty$. With the Monte Carlo method, it is difficult to capture this value directly. For each $\delta = 2^{-k}, k = 3 ,\dots , 6$, the empirical histograms from the MC method indicate that $f_{\textup{slide}}(t)>0$ in the neighborhood of $0$ and $f_{\textup{slide}} (t) \downarrow 0$ as $t \downarrow 0^+$.    

\paragraph{Comment on our memory limit}
The numerical results reported on this work have been performed on a MacBook Air (13-inch, Mid 2013) with the following characterics: Processor 1,3 GHz Intel Core i5, Memory 8 GB 1600 MHz DDR3, Graphics Intel HD Graphics 5000 1536 MB. With such a memory limit, the size of the matrix ${\bf M}$ must remain below $10^7 \times 10^7$. 

\paragraph{A data-learning heuristic to go beyond our memory limit: extrapolation of our results}
Below, we use the notation $S^{[i]}(k,l)$ for the estimation  of $S^i$ defined by (\ref{eq:defSi}) using the deterministic method where $p=2^k$ and $\delta = 2^{-l}$. 
As shown in Figure \ref{fig4}, due to memory limit, we cannot evaluate $S^{[i]}(k,l)$ when $(k,l) \in \bar{\mathfrak{U}} = \{ 7 \leq k \leq 9 \} \times \{ 5 \} \cup  \{ 5 \leq k \leq 9 \} \times \{ 6 \}.  
$
We can only evaluate them when $(k,l) \in \mathfrak{U} = \{ 1 \leq k \leq 9\} \times \{ 1 \leq l \leq 4\} \cup \{ 1 \leq k \leq 6 \} \times \{ 5 \} \cup  \{ 1 \leq k \leq 4 \} \times \{ 6 \}.  
$
Nonetheless, we can cook up an extrapolation approach to estimate the missing data on $\bar{\mathfrak{U}}$ where we keep the notation $S^{[i]}(k,l)$. To compute $S^{[i]}(k,l)$ on $\bar{\mathfrak{U}}$, we assume that the error trends observed when $\delta$ is large or $p$ small remain the same as when $\delta$ is small and $p$ large. Our \textit{heuristics} starts from this observation
\begin{align}
& S^{[i]}(k,l) = S^{[i]}(k,l-1) + R_V(k,l) \left ( S^{[i]}(k,l-1) - S^{[i]}(k,l-2) \right ) , \\
& S^{[i]}(k,l) = S^{[i]}(k-1,l) + R_H(k,l) \left ( S^{[i]}(k-1,l) - S^{[i]}(k-2,l) \right )  ,
\end{align}
with 
\begin{equation}
R_V(k,l) = \frac{S^{[i]}(k,l)-S^{[i]}(k,l-1)}{S^{[i]}(k,l-1) - S^{[i]}(k,l-2)} \: \mbox{ and } \: R_H(k,l) = \frac{S^{[i]}(k,l)-S^{[i]}(k-1,l)}{S^{[i]}(k-1,l) - S^{[i]}(k-2,l)}.
\end{equation}
Clearly, $R_V(k,l)$ and $R_H(k,l)$ are unknown since they depend on $S^{[i]}(k,l)$, the targeted unknown quantity. However, we have an idea of the error trend and then our heuristics consists in the following natural approximation $R_V(k,l) \approx R_V(k,l-1)$ and $R_V(k,l) \approx R_V(k-1,l)$.
Then, we define
 \begin{align}
& S_V^{[i]}(k,l) = S^{[i]}(k,l-1) + R_V(k,l-1) \left ( S^{[i]}(k,l-1) - S^{[i]}(k,l-2) \right ) , \\
& S_H^{[i]}(k,l) = S^{[i]}(k-1,l) + R_H(k-1,l) \left ( S^{[i]}(k-1,l) - S^{[i]}(k-2,l) \right ) ,
\end{align}
and finally
\begin{equation}
S^{[i]}(k,l)  = \frac{1}{2} \left( S_V^{[i]}(k,l) + S_H^{[i]}(k,l) \right ).
\end{equation}
In this way, this heuristical definition of $S^{[i]}(k,l)$ requires six values: 
$S^{[i]}(k-j,l)$ and $S^{[i]}(k,l-j)$, $j=1,2,3$.
Starting from the data on $\mathfrak{U}$, we can propagate this data-learning scheme on $\bar{\mathfrak{U}}$.
In Table \ref{tab:heuristics}, we present our results which show the stability of the extrapolation procedure.
\begin{table}[h!]
\centering
{\scriptsize
\begin{tabular}{||c c c c c c c c c||} 
 \hline
 & (7,5) & (8,5) & (9,5) & (5,6) & (6,6) & (7,6) & (8,6) & (9,6) \\ 
 \hline\hline
 $S^{[1]}$ & 0.1291 & 0.1294 & 0.1295 (0.1302) & 0.1239 & 0.1244 & 0.1249 & 0.1254 & 0.1258 (0.1257)\\
 $S^{[2]}$ & 0.6886 & 0.6893 & 0.6896 (0.6980) & 0.6803 & 0.6835 & 0.6850 & 0.6855 & 0.6857 (0.6850) \\
 \hline
\end{tabular}
}
\caption{Computation of $S^{[i]}(k,l)$ with our heuristics on $\bar{\mathfrak{U}}$. The first row lists the elements of $\bar{\mathfrak{U}}$. The values in parenthesis are shown for comparison and have been obtained with different methods, Monte Carlo in the $S^{[1]}$ row and (stationary) Kolmogorov equation for ${\color{black}X}$ in the $S^{[2]}$ row. {\color{black} Here $\mathfrak{b}(v)=-v$}.
}
\label{tab:heuristics}
\end{table}

\section{Conclusions and perspectives}
In this work, we tackle the problem of modeling stochastic dry friction including different coefficients for the static and dynamic forces by proposing a PDMP approach. Here the external forcing takes discrete values and it is assumed to be a Markov jump process depending on a small parameter.
We show ergodicity and provide a representation formula of the stationary measure.
We also obtain a characterization of the Laplace transforms of the probability density functions of the durations of the static and dynamic phases. Moreover, when the aforementioned parameter vanishes and when the two coefficients of static and dynamic forces are identical, we show that the PDMP converges in distribution to the solution of a well-known dry friction model. This model is subjected
to a colored noise (an Ornstein-Uhlenbeck process) and its definition involves a differential inclusion formalism. This bridges the
gap between our approach and existing well-posed continuous models when the coefficients for the static and dynamic forces are identical.
As a future work, it should be possible to consider the extension of the PDMP approach to higher dimensions and more realistic systems such as randomly driven moveable rigid bodies in frictional contact with rigid obstacles or mechanical systems of rigid bodies as inspired by \cite{siam1Lotstedt,Erdmann94}. 
It would also be of interest to develop numerical simulation and stochastic control methods for these systems by combining existing techniques such as Lagrange multipliers in the same spirit as \cite{siam2Lotstedt} and \cite{siamRolandGlowinski}.

\section{Acknowledgements}
The authors are grateful  to  anonymous referees  for  their remarks and suggestions which were very helpful in improving the manuscript.

 \appendix
 
\section{Proof of Proposition \ref{prop:1}}
\label{app:prop1}
This is a diffusion approximation result.

First, the process ${\color{black}X}^\delta_t$ is Markov with the generator $Q^\delta {\color{black}f}({\color{black}x}) = 2\delta^{-2}\tau^{-1} \big( \alpha({\color{black}x}){\color{black}f}({\color{black}x}+\delta)+\alpha^\star({\color{black}x}) {\color{black}f}({\color{black}x}-\delta) - {\color{black}f}({\color{black}x})\big)$ and it converges in distribution in the space of the c\`adl\`ag functions to the diffusion process with generator $Q$. Indeed $Q^\delta {\color{black}f}({\color{black}x})=Q{\color{black}f}({\color{black}x})+o(1)$ for any smooth test function ${\color{black}f}$ and $Q=\tau^{-2}  \partial_{\color{black}x}^2 - \tau^{-1} {\color{black}x}\partial_{\color{black}x} $ is the infinitesimal
generator of ${\color{black}X}_t$ \cite[Chapter 12]{MR0838085}.

Second the map ${\color{black}X}^\delta \mapsto {\color{black}V}^\delta$ from the space of the c\`adl\`ag functions to the space of the continuous functions, with $v$ solution of  $\dot {\color{black}V}^\delta  + \partial \varphi ({\color{black}V}^\delta) \ni { \color{black} \mathfrak{b}(V^\delta) + X^\delta }$, is continuous. We now present the proof of this statement (formulated in Proposition \ref{prop:B2}).

\subsection*{Notation and assumption} 
The set of (real valued) right continous left limit (c\`adl\`ag) functions on $[0,T]$ is denoted by $D[0,T]$.
The set of continuous functions on $[0,T]$ is denoted by $C[0,T]$. Clearly $C[0,T] \subset D[0,T]$.
We consider $\mathfrak{b}: \mathbb{R} \to \mathbb{R}$ a Lipschitz function of Lipschitz constant $L_b>0$, $\xi \in \mathbb{R}$ and $\varphi(v)  = \mu |v|, \mu >0$.
\paragraph{Converging sequence in the $J_1$ topology \cite{bill}}
We say that a sequence of functions $\{ w_n \} \in D[0,T]$ converges towards a function $w \in D[0,T]$ in the sense of $J_1$ topology if 
there exists a sequence of increasing homeomorphims $\{ \lambda_n \}$ on $[0,T]$ such that $\lambda_n(0) = 0, \: \lambda_n(T) = T$ and
\begin{equation}
\textbf{(a)} \lim \limits_{n \to \infty} \sup \limits_{0 \leq t \leq T} |\lambda_n(t) - t| = 0
\quad
\mbox{and}
\quad
\textbf{(b)} \lim \limits_{n \to \infty} \sup \limits_{0 \leq t \leq T} |w_n(\lambda_n(t)) - w(t)| = 0.
\label{eq:defconvJ1}
\end{equation}

\subsection*{Preliminary : case of a differential equation with a c\`adl\`ag function at the rhs}
Let $w \in D[0,T]$ and $\xi \in \RR$.
Consider the following problem:
\begin{equation}
\label{Px}
\begin{cases}
&
\textup{find a function \,} \: v(w) \in C[0,T] \:  \textup{ satisfying } \:   \\
& \forall t \geq 0, \: v_t(w) = \xi + \int_0^t \mathfrak{b}(v_s(w)) \textup{d} s + \int_0^t w(s) \textup{d} s.
\end{cases}
\end{equation}
\begin{prop}
\label{prop:B1}
There exists a unique solution to the problem \eqref{Px}.
As a consequence, the mapping $v$ which associates $w$ to $v(w)$ from $D[0,T]$ to $C[0,T]$ is well defined. 
Moreover, $v$ is continous with respect to the $J_1$ topology on $D[0,T]$ in the sense that if a sequence of functions $w_n \in D[0,T]$ converges to a function $w \in D[0,T]$ as $n \to \infty$ then $v(w_n)$ converges to $v(w)$ as $n \to \infty$ in $C[0,T]$.
\end{prop}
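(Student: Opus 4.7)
The plan is to split the result into two independent pieces: (i) existence and uniqueness via a Picard--Banach fixed point in $C[0,T]$, and (ii) $J_1$-continuity via a Gronwall estimate built on top of a uniform-convergence statement for the antiderivative $W(t) := \int_0^t w(s)\,\textup{d}s$.

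For (i), I would first observe that since $w \in D[0,T]$ is bounded on the compact $[0,T]$, the map $W$ belongs to $C[0,T]$ (and is in fact Lipschitz). The equation rewrites as $x_t = \xi + \int_0^t b(x_s)\,\textup{d}s + W(t)$, so it defines a mapping $\Phi : C[0,T] \to C[0,T]$. Using the Lipschitz constant $L_b$ of $b$, $\Phi$ is a contraction in the weighted norm $\|x\|_\beta := \sup_{t \in [0,T]} e^{-\beta t} |x_t|$ as soon as $\beta > L_b$, so Banach's fixed-point theorem supplies a unique $x(w) \in C[0,T]$.

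For (ii), let $w_n \to w$ in $D[0,T]$ with homeomorphisms $\lambda_n$ satisfying \eqref{eq:defconvJ1}. The first step is to show that $W_n(t) := \int_0^t w_n(s)\,\textup{d}s$ converges to $W$ uniformly on $[0,T]$. I would first bound $\sup_n \|w_n\|_\infty$ using $\|w_n\|_\infty = \|w_n \circ \lambda_n\|_\infty \leq \|w_n \circ \lambda_n - w\|_\infty + \|w\|_\infty$. Next, at every continuity point $t$ of $w$, setting $u_n := \lambda_n^{-1}(t)$ yields $u_n \to t$ (since $\lambda_n \to \mathrm{id}$ uniformly forces the same for its inverse), and therefore $w_n(t) = w_n(\lambda_n(u_n)) \to w(t)$ by combining condition (b) of \eqref{eq:defconvJ1} with continuity of $w$ at $t$. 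Since a c\`adl\`ag function on a compact has at most countably many discontinuities, $w_n \to w$ Lebesgue a.e.\ on $[0,T]$; dominated convergence then gives $W_n(t) \to W(t)$ pointwise, and because the family $\{W_n\}$ is equi-Lipschitz with constant $\sup_n \|w_n\|_\infty$, pointwise convergence on the compact $[0,T]$ promotes itself to uniform convergence.

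The conclusion is then a direct Gronwall argument: writing $x = x(w)$ and $x_n = x(w_n)$,
\begin{equation*}
|x_n(t) - x(t)| \leq L_b \int_0^t |x_n(s) - x(s)|\,\textup{d}s + \|W_n - W\|_{C[0,T]},
\end{equation*}
so Gronwall's lemma delivers $\|x_n - x\|_{C[0,T]} \leq e^{L_b T}\|W_n - W\|_{C[0,T]} \to 0$. The main obstacle will be the uniform convergence of the antiderivatives: a naive change-of-variables $s = \lambda_n(u)$ would require the $\lambda_n$'s to be absolutely continuous, which need not hold in general. The proposed route---uniform boundedness of $\|w_n\|_\infty$, pointwise convergence on the (co-countable) set of continuity points of $w$, dominated convergence, and equi-Lipschitz regularity of the antiderivatives---bypasses this difficulty using only soft ingredients and never touches the derivative of $\lambda_n$.
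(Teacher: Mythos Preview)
Your proposal is correct and follows essentially the same route as the paper: existence/uniqueness by a fixed-point argument (the paper uses Picard iteration plus Gronwall, you use a weighted-norm contraction, which is equivalent), and continuity by combining Gronwall with dominated convergence applied to $w_n \to w$ almost everywhere at continuity points of $w$. The paper phrases the last step as $L^1$ convergence $\int_0^T |w_n - w| \to 0$ via the split $|w_n - w| \leq |w_n - w\circ\lambda_n^{-1}| + |w\circ\lambda_n^{-1} - w|$, whereas you recast it as uniform convergence of the antiderivatives $W_n \to W$; both routes unpack to the same pointwise-a.e.\ plus dominated-convergence argument, and your equi-Lipschitz upgrade is a harmless extra step that the paper's $L^1$ formulation avoids.
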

\begin{proof} \underline{Part 1}.
The existence of a solution can be obtained by Picard's iteration.  
First define $\forall t \geq 0, \: v_t^0(w) \equiv \xi$ and then
$$
\forall n \geq 0, \: \forall t \geq 0, \: v_t^{n+1}(w) = \xi + \int_0^t \mathfrak{b}(v_s^n(w)) \textup{d} s + \int_0^t w(s) \textup{d} s.
$$
The sequence $\{ v^n(w)\}$ is composed of continuous functions. 
Since $\mathfrak{b}$ is Lipschitz it converges uniformly on $[0,T]$.
The limit is denoted by $v(w)$ and it satisfies \eqref{Px}.\\
\underline{Part 2}.
 If $v(w)$ and $\tilde v(w)$ are two solutions of \eqref{Px} then we must have 
$$
\sup \limits_{0 \leq r \leq t} |v_r(w)-\tilde v_r (w)| \leq L_b \int_0^t \sup \limits_{0 \leq r \leq s} |v_r(w)-\tilde v_r (w)| \textup{d}s  ,
$$
which implies, by Gronwall's lemma, that $v(w)=\tilde v(w)$ in $C[0,T]$.\\
\underline{Part 3.}
Let  $\{ w_n \}$ be a sequence of functions in $D[0,T]$ converging towards a function $w \in D[0,T]$ in the $J_1$ topology.
Since $\forall t \geq 0$, 
$$ 
v_t(w_n) = \xi + \int_0^t \mathfrak{b}(v_s(w_n)) \textup{d} s + \int_0^t w_n(s) \textup{d} s
\: \mbox{ and } \:  
v_t(w) = \xi + \int_0^t \mathfrak{b}(v_s(w)) \textup{d} s + \int_0^t w(s) \textup{d} s, 
$$
we have
 $$
\sup \limits_{0 \leq r \leq t} |v_r(w)- v_r (w_n )| 
\leq L_b \int_0^t \sup \limits_{0 \leq r \leq s} |v_r(w)- v_r (w_n)| \textup{d} s + \int_0^t |w(s)-w_n(s)| \textup{d} s.
$$
The latter implies using Gronwall's lemma that 
 $$
\sup \limits_{0 \leq r \leq T} |v_r(w)- v_r (w_n )| 
\leq  \exp(L_b T) \int_0^T |w(s)-w_n(s)| \textup{d} s.
$$
Finally, we verity that 
$$
\int_0^T |w(s)-w_n(s)| \textup{d} s \to 0 \: \mbox{ as } \: n \to \infty.
$$
Indeed 
$
\int_0^T |w(s)-w_n(s)| \textup{d} s \leq A_n+B_n$, where $A_n= \int_0^T |w(s)-w(\lambda_n^{-1}(s))| \textup{d} s$, $B_n= \int_0^T |w(\lambda_n^{-1}(s))-w_n(s)| \textup{d} s$, and
$\lambda_n$ is a sequence  of increasing homeomorphims associated to the convergence of $w_n$ in the $J_1$ sense. 
We have $B_n \leq \| w-w_n\circ \lambda_n\|_\infty T$ which shows that $\lim \limits_{n \to \infty }B_n = 0$ by (\ref{eq:defconvJ1}b).
We also have
$w(\lambda_n^{-1}(s)) \to w(s)$ at any point of continuity of $w$, that is to say, almost surely (with respect to the Lebesgue measure over $[0,T]$), and $w(s) -w(\lambda_n^{-1}(s))$ is bounded by $2\|w\|_\infty$, so that $A_n \to 0$
by dominated convergence.
\end{proof}


\subsection*{Case of a differential inclusion with a c\`adl\`ag function at the rhs}
Let $w \in D[0,T]$ and $\xi \in \RR$.
Consider the following problem 
\begin{equation}
\label{Pv}
\begin{cases}
& \textup{find a function} \: v(w) \in C[0,T] \: \textup{ satisfying } \: \\
& \forall t \geq 0, \: v_t(w) + \Delta_t(w) = \xi + \int_0^t \mathfrak{b}(v_s(w)) \textup{d} s + \int_0^t w(s) \textup{d} s , 
\end{cases}
\end{equation}
where $\Delta(w) \in H^1(0,T)$ and with the notation $\delta(w) = \dot \Delta(w)$ 
\begin{align*}
& \forall \zeta \in C[0,T], \forall 0 \leq t < t+h \leq T,\\
& \int_t^{t+h} \left ( \delta_s(w)(\zeta(s) - v_s(w)) + \varphi(v_s(w)) \right )\textup{d} s  \leq \int_t^{t+h} \varphi(\zeta(s)) \textup{d} s.  
\end{align*}
The conditions in \eqref{Pv} are encoded in the differential inclusion notation
$$
\dot v + \partial \varphi(v) \ni \mathfrak{b}(v) + w.
$$
{
\color{black}
\begin{remark}
The mathematical problem which consists in finding a continuous and a.e. differentiable function $v(.)$ satisfying (A.3) is well posed. Its solution describes the velocity of an object subject to Coulomb friction when $\mu = \mud=\mus$. Indeed, when $v=0$ on a non empty time interval then $\dot v = 0$ and necessarily $|\mathfrak{b}(v) + w| \leq \mu$. It is a static phase for $v$.
If $\pm v >0$ (which occurs on non empty time interval) then 
$\dot v \pm \mu = \mathfrak{b}(v)$. It is a dynamic phase for $v$. The multivalued operator $\partial \varphi$ governs the phase transitions at which loss of differentiability of $v$ may occur. These two phases correspond to those mentioned for describing dry friction in the introduction above Equation (1.1). For any $t\geq 0$, the multivalued operator $\partial \varphi$ applied to $v(t)$ is the set of sub-slopes of $\varphi$ in $v(t)$ 
$$
\partial \varphi(v(t)) = \{ \mathfrak{q} \in \mathbb{R}, \: \forall \zeta \in \mathbb{R}, \: \mathfrak{q}(\zeta-v(t)) + \varphi(v(t)) \leq \varphi(\zeta) \}.
$$ 
Therefore, it is possible to formulate (A.3) under the form of a variational inequality
$$
\forall \: \mbox{a.e.} \: t \geq 0, 
\: \forall \zeta \in \mathbb{R}, \: (\dot v(t) - \mathfrak{b}(v(t)) - w(t))(\zeta - v(t)) + \varphi(v(t)) \leq \varphi(\zeta),  
$$  
here the role of $\zeta$ is to act as a real valued test parameter. Furthermore, as $\dot v(.) \in L_{\textup{loc}}^2$ is only defined a.e., it is convenient to work with an integrated version on arbitrary small intervals. In this case, the test parameter $\zeta$ becomes a real valued continuous test function. 
\end{remark}}
\begin{prop}
\label{prop:B2}
There exists a unique solution to the problem \eqref{Pv}.
As a consequence, the mapping $v$ which associates $w$ to $v(w)$ from $D[0,T]$ to $C[0,T]$ is well defined. 
Moreover, $v$ is continous with respect to the $J_1$ topology on $D[0,T]$.
\end{prop}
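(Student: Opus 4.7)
\emph{Plan.} I would rely on the classical theory of maximal monotone operators (Br\'ezis \cite{brezis73}) combined with the Lipschitz property of $b$ and the scheme already used for Proposition \ref{prop:B1}.

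\emph{Existence and uniqueness.} For fixed $w \in D[0,T] \subset L^\infty(0,T)$, I would set up a Picard iteration: define $v^{n+1}$ as the unique $H^1(0,T)$ solution of $\dot v^{n+1} + \partial\varphi(v^{n+1}) \ni b(v^n) + w$ with $v^{n+1}(0)=\xi$. Each step is well-posed by the general existence theorem for $\dot v + \partial\varphi(v) \ni g$ with $g \in L^2(0,T)$ and $\overline{\mathrm{dom}\,\partial\varphi} = \RR$. Because the resolvent of $\partial\varphi$ is nonexpansive, the differences $v^{n+1}-v^n$ satisfy a Gronwall-type estimate with constant $L_b$, making the iteration a contraction on $C([0,\theta];\RR)$ for $\theta$ small and extending to $[0,T]$ by concatenation. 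Uniqueness follows by the same monotonicity-plus-Gronwall argument applied to two candidate solutions.

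\emph{Continuity in the $J_1$ topology.} Given $w_n \to w$ in $J_1$, set $v_n = v(w_n)$ and $v = v(w)$. I would cross-test the variational inequality in (\ref{Pv}): take $\zeta = v$ in the inequality satisfied by $v_n$ and $\zeta = v_n$ in the one satisfied by $v$, and add. The $\varphi$-terms cancel, monotonicity of $\partial\varphi$ makes the $(\delta_n - \delta)$-contribution nonnegative, and using $|b(v_n)-b(v)|\le L_b|v_n-v|$ there remains
\begin{equation*}
\tfrac12 |v_n(t) - v(t)|^2 \le L_b \int_0^t |v_n - v|^2\,\textup{d}s + \int_0^t |w_n - w|\,|v_n - v|\,\textup{d}s,
\end{equation*}
since $v_n(0)=v(0)=\xi$. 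A uniform $L^\infty$ bound on $v_n$ (obtained by testing the inclusion with $v_n$ itself, using $\sup_n\|w_n\|_\infty < \infty$, which is a consequence of $J_1$-convergence) combined with Gronwall yields
\begin{equation*}
\sup_{0 \le t \le T} |v_n(t) - v(t)|^2 \le C\, e^{2L_b T} \int_0^T |w_n(s) - w(s)|\,\textup{d}s,
\end{equation*}
with $C$ independent of $n$. The right-hand side tends to $0$ by Part 3 of the proof of Proposition \ref{prop:B1}, and the claim follows.

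\emph{Main obstacle.} The delicate point is justifying the cross-test rigorously within the variational inequality formulation of (\ref{Pv}), in particular identifying $\int_0^t (\dot v_n - \dot v)(v_n - v)\,\textup{d}s$ with $\tfrac12|v_n(t)-v(t)|^2$. This is legitimate once one knows $v_n, v \in H^1(0,T)$ (granted by the maximal monotone theory for $L^2$ right-hand sides), but it requires care since $\delta_n, \delta$ are only in $L^2$ and the inclusion is encoded by the integrated variational inequality rather than pointwise.
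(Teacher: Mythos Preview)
Your proposal is correct but follows a genuinely different route from the paper's proof.

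\textbf{Existence and uniqueness.} The paper argues via Moreau--Yosida regularisation: it replaces $\varphi$ by the smooth $\varphi_p$, solves the resulting ODE (this is exactly Proposition~\ref{prop:B1} with $\tilde b = b - \varphi_p'$), and shows the penalised solutions $v^p(w)$ form a Cauchy sequence in $C[0,T]$ with the explicit, $w$-independent rate
\[
\sup_{0\le t\le T}|v_t^p(w)-v_t(w)|\le \sqrt{C_T/p}.
\]
You instead invoke the Br\'ezis theory directly at each Picard step and contract via monotonicity. Both are standard and valid; the paper's route is more constructive and produces the uniform rate above, which it then exploits.

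\textbf{Continuity.} Here the strategies diverge more sharply. The paper uses a three-term splitting
\[
|v(w_n)-v(w)|\le |v(w_n)-v^p(w_n)|+|v^p(w_n)-v^p(w)|+|v^p(w)-v(w)|,
\]
bounding the outer terms by $\sqrt{C_T/p}$ \emph{uniformly in $n$} and the middle term by Proposition~\ref{prop:B1} for fixed $p$. Your energy estimate via cross-testing the variational inequality is more direct: it bypasses the penalisation entirely and yields continuity in one stroke, at the price of needing the a~priori $H^1$ regularity of $v,v_n$ (which you correctly flag and which does hold here, since $\Delta\in H^1$ and the right-hand side of \eqref{Pv} is Lipschitz in $t$). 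The paper's approach, by contrast, never manipulates $\dot v$ directly and so sidesteps that regularity question altogether. Both arguments ultimately reduce to the $L^1$ convergence $\int_0^T|w_n-w|\,\textup{d}s\to 0$ established in Proposition~\ref{prop:B1}.
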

\begin{proof}
\underline{Part 1} The proof follows the steps of the one of  \cite[proposition C.1]{cpam_long} which addresses the same problem when $w \in C[0,T]$. We recall the essential steps.
For any $p$ we denote by $\varphi_p$ the Moreau-Yosida regularization of $\varphi$,
$$
\varphi_p(v) = 
\begin{cases}
|v| - \frac{1}{2p}& \: |v| > \frac{1}{p}\\
p \frac{v^2}{2} & \: |v| \leq \frac{1}{p} 
\end{cases}
$$
and we consider the penalized problem 
$$
\forall t \geq 0, \: v_t^p(w) + \int_0^t \varphi_p'(v_s^p(w)) \textup{d} s = \xi + \int_0^t \mathfrak{b}(v_s^p(w)) \textup{d} s + \int_0^t w(s) \textup{d} s.
$$
From Proposition \ref{prop:B1}, this ODE has a unique solution $v^p(w) \in C[0,T]$. 
It can be shown that $\{ v^p(w) \}$ is a Cauchy sequence in $C[0,T]$ and  
$$
\sup \limits_{0 \leq t \leq T} |v_t^p(w) - v_t^q(w) |^2 \leq \left ( \frac{1}{p} + \frac{1}{q} \right ) C_T  ,
$$
where the constant $C_T$ depends only on the Lipschitz constant of $\mathfrak{b}$, $T$ and $\mu$.
It is a consequence of the property 
$
\sup \limits_{p \geq 1} \sup \limits_{ v \in \mathbb{R}} |\varphi_p'(v)| = \mu.
$
Thus the limit $v(w) \in C[0,T]$ exists and satisfies 
$$
\sup \limits_{0 \leq t \leq T} |v_t^p(w) - v_t(w) | \leq \sqrt{\frac{C_T}{p}} .
$$ 
It can then be shown that $v(w)$ satisfies the conditions in \eqref{Pv}.\\
\underline{Part 2.}
Assume $w_n \in D[0,T]$ converges to a function $w \in D[0,T]$ as $n \to \infty$. We want to show that then $v(w_n)$ converges to $v(w)$ as $n \to \infty$ in $C[0,T]$.
We can write
\begin{align*}
\sup \limits_{0 \leq t \leq T} | v_t(w_n) - v_t(w) | 
\leq & 
\sup \limits_{0 \leq t \leq T} | v_t(w_n) - v_t^p(w_n) | + 
\sup \limits_{0 \leq t \leq T} | v_t^p(w_n) - v_t^p(w) |\\
& + \sup \limits_{0 \leq t \leq T} | v_t^p(w) - v_t(w) |.  
\end{align*}
Let $\varepsilon >0$. For $p$ large enough we have
$$
\sup \limits_{0 \leq t \leq T} | v_t^p(w) - v_t(w) | \leq \frac{\varepsilon}{3}
\:
\mbox{ and }
\:
\sup \limits_{n} \sup \limits_{0 \leq t \leq T} | v_t(w_n) - v_t^p(w_n) | \leq \frac{\varepsilon}{3}  .
$$
Finally for $n \geq n_p$ large enough 
$$
\sup \limits_{0 \leq t \leq T} | v_t^p(w_n) - v_t^p(w) | \leq \frac{\varepsilon}{3},
$$
which completes the proof of the proposition.
\end{proof}

\section{Proof of Proposition \ref{prop:31}}
\label{app:a}
We want to establish that ${\tau_1}$ is integrable, 
$\EE_{\mathfrak{s}_+}   [ {\tau_1}  ] <+\infty$ (which also proves by symmetry that $\EE_{\mathfrak{s}_-}   [ {\tau_1}  ]  = \EE_{\mathfrak{s}_+}   [ {\tau_1}  ]  <+\infty$).

{\color{black}The process $V_t^\delta$ is bounded by $\max(|V_0^\delta|, v_\textup{max})$. }

{\it Step 1.
Let $\tilde{\tau}_1 = \inf \{ t >0, \,  {\color{black}V}^\delta_t = 0\}$. We have 
$$
C_{\tilde{\tau}} := \sup_{{\color{black}x} \in \{{\color{black}x}_{k_\mus+1},\ldots,{\color{black}x}_N\}} \EE_{({\color{black}x},1,0)} [ \tilde{\tau}_1 ] <+\infty.
$$
}
By symmetry we have $\EE_{({\color{black}x},-1,0)} [ \tilde{\tau}_1 ] =\EE_{(-{\color{black}x},1,0)} [ \tilde{\tau}_1 ]$ for ${\color{black}x} \in \{{\color{black}x}_{-N},\ldots,{\color{black}x}_{-k_\mus-1}\}$, and therefore $ \sup_{{\color{black}x}\in \{{\color{black}x}_{-N},\ldots,{\color{black}x}_{-k_\mus-1}\}} \EE_{({\color{black}x},-1,0)} [ \tilde{\tau}_1 ]=C_{\tilde{\tau}}$.\\
{\it Proof.}
If ${\color{black}X}^\delta_0 \in \{{\color{black}x}_{k_\mus+1},\ldots,{\color{black}x}_N\}$, ${\color{black}V}^\delta_0=0$ and $t< \tilde{\tau}_1 $, then 
{\color{black}$
0\leq {\color{black}V}^\delta_t = \int_0^t [-\mud+b({\color{black}X}^\delta_s ,{\color{black}V}^\delta_s) ] \textup{d} s \leq -\mud t + \int_0^t {\color{black}X}^\delta_s \textup{d} s
$}.
Therefore, for any ${\color{black}x} \in \{{\color{black}x}_{k_\mus+1},\ldots,{\color{black}x}_N\}$ and $t>0$, we have
\begin{align*}
\PP_{({\color{black}x},1,0)} \big( \tilde{\tau}_1 > t \big) 
= \PP_{({\color{black}x},1,0)}\big( \tilde{\tau}_1 > t, \, {\color{black}V}^\delta_t \geq 0 \big)
&\leq 
\PP_{({\color{black}x},1,0)}\big(\int_0^t {\color{black}X}^\delta_s \textup{d} s \geq \mud t 
 \big)
  \\
 &
 \leq \mud^{-4} t^{-4} \EE_{\color{black}x} \big[ \big(\int_0^t {\color{black}X}^\delta_s \textup{d} s\big)^4 \big].
\end{align*}
By the ergodic properties of $({\color{black}X}_t^\delta)$, we have $ t^{-2} \EE_{\color{black}x} \big[ \big(\int_0^t {\color{black}X}^\delta_s \textup{d} s\big)^4 \big] 
\stackrel{t \to +\infty}{\longrightarrow} 6 \big( \int_0^{+\infty} \EE_{{\rm s}} [{\color{black}X}_0^\delta {\color{black}X}_s^\delta] \textup{d} s\big)^2$ which is finite (where $\EE_{\rm s}$ is the expectation under the stationary distribution of the process ${\color{black}X}_t^\delta$).
This shows that there exists $C_{\mud}>0$ such that, for all $t>0$,
$$
\sup_{ {\color{black}x} \in \{{\color{black}x}_{k_\mus+1},\ldots,{\color{black}x}_N\}}
\PP_{({\color{black}x},1,0)} \big( \tilde{\tau}_1 > t \big) \leq  \frac{C_{\mud}}{1+t^2}  ,
$$
which gives 
the desired result.
\qed

{\it Step 2. We have 
$$
C_{p} := \inf_{ {\color{black}x} \in \{{\color{black}x}_{k_\mus+1},\ldots,{\color{black}x}_N\}} \PP_{({\color{black}x},1,0)}\big( {\color{black}X}^\delta_{\tilde{\tau}_1} \in \{ -{\color{black}x}_{k_\mus},\ldots,{\color{black}x}_{k_\mus}\}\big) > 0 .
$$
}
{\it Proof.} 
Let $k_{\rm d}$ be the largest index such that ${\color{black}x}_{k_{\rm d}}<\mud$.
We here denote by $\theta_j$ the times between two random jumps of the process ${\color{black}X}^\delta$ and by $X_j \in \{-\delta,\delta\}$ the jump amplitudes. For $k \in \{ k_\mus+1 ,\ldots, N\}$, we consider 
$B_k = \{  \theta_1+\cdots+\theta_{k-k_{\rm d}} <1, \, X_{k-k_{\rm d}}=\cdots =X_1=-\delta , \theta_{k-k_{\rm d}+1}> ({\color{black}x}_k-\mud)/(\mud-{\color{black}x}_{k_{\rm d}})\}$. This corresponds to a trajectory that goes northwest from $({\color{black}x}_k,1,0)$ up to the line $({\color{black}x}_{k_{\rm d}},1,*)$ in time less than $1$, and then goes south until reaching $({\color{black}x}_{k_{\rm d}},1,0)$ which triggers a deterministic jump to $({\color{black}x}_{k_{\rm d}},0,0)$.
We have $\PP_{({\color{black}x}_k,1,0)}(B_k) >0$ and 
$\PP_{({\color{black}x}_k,1,0)}\big( {\color{black}X}^\delta_{\tilde{\tau}_1} \in \{ {\color{black}x}_{-k_\mus},\ldots,{\color{black}x}_{k_\mus}\}\big) \geq \PP_{({\color{black}x}_k,1,0)}(B_k) >0$,
which gives the desired result after taking $\inf_{k \in \{ k_\mus+1,\ldots, N\}}$.\qed

{\it Step 3. 
 We have 
$$
C_{\hat{\tau}} := \sup_{{\color{black}x} \in \{{\color{black}x}_{k_\mus+1},\ldots,{\color{black}x}_N\}} \EE_{({\color{black}x},1,0)} [ \hat{\tau}_1 ] <+\infty.
$$
}
By symmetry we have $\EE_{({\color{black}x},-1,0)} [ \hat{\tau}_1 ] =\EE_{(-{\color{black}x},1,0)} [ \hat{\tau}_1 ]$ for ${\color{black}x} \in \{{\color{black}x}_{-N},\ldots,{\color{black}x}_{-k_\mus-1}\}$, and therefore $ \sup_{ {\color{black}x} \in \{{\color{black}x}_{-N},\ldots,{\color{black}x}_{-k_\mus-1}\}} \EE_{({\color{black}x},-1,0)} [ \hat{\tau}_1 ]=C_{\hat{\tau}}$.\\
{\it Proof.}
Using the strong Markov property, we have for ${\color{black}x} \in \{{\color{black}x}_{k_\mus+1},\ldots,{\color{black}x}_N\}$:
\begin{align*}
\EE_{({\color{black}x},1,0)} [\hat{\tau}_1] 
&= \EE_{({\color{black}x},1,0)} \big[ \hat{\tau}_1 {\mathbf 1}_{ {\color{black}X}^\delta_{\tilde{\tau}_1} \in \{ -{\color{black}x}_{k_\mus},\ldots,{\color{black}x}_{k_\mus}\} } \big] 
+ \EE_{({\color{black}x},1,0)}   \big[ \hat{\tau}_1 {\mathbf 1}_{ {\color{black}X}^\delta_{\tilde{\tau}_1} \not\in \{ -{\color{black}x}_{k_\mus},\ldots,{\color{black}x}_{k_\mus} \}} \big] \\
&= \EE_{({\color{black}x},1,0)} \big[ \tilde{\tau}_1 {\mathbf 1}_{ {\color{black}X}^\delta_{\tilde{\tau}_1} \in \{ -{\color{black}x}_{k_\mus},\ldots,{\color{black}x}_{k_\mus}\} } \big] 
+ \EE_{({\color{black}x},1,0)}   \big[ (\hat{\tau}_1 -\tilde{\tau}_1 +\tilde{\tau}_1 ) {\mathbf 1}_{ {\color{black}X}^\delta_{\tilde{\tau}_1} \not\in \{ -{\color{black}x}_{k_\mus},\ldots,{\color{black}x}_{k_\mus} \}} \big] \\
&= \EE_{({\color{black}x},1,0)} \big[ \tilde{\tau}_1 \big] 
+ \EE_{({\color{black}x},1,0)}   \big[ (\hat{\tau}_1 -\tilde{\tau}_1){\mathbf 1}_{ {\color{black}X}^\delta_{\tilde{\tau}_1} \not\in \{ -{\color{black}x}_{k_\mus},\ldots,{\color{black}x}_{k_\mus} \}} \big] \\
&= \EE_{({\color{black}x},1,0)} \big[ \tilde{\tau}_1 \big] +
\sum_{\tilde{{\color{black}x}} \in  \{{\color{black}x}_{k_\mus+1},\ldots,{\color{black}x}_N\} }
 \EE_{(\tilde{{\color{black}x}},1,0)} \big[ \hat{\tau}_1  \big] \PP_{({\color{black}x},1,0)} \big({\color{black}X}^\delta_{\tilde{\tau}_1}=\tilde{{\color{black}x}}\big) \\
 &\qquad +
\sum_{\tilde{{\color{black}x}} \in  \{{\color{black}x}_{-N},\ldots,{\color{black}x}_{-k_\mus-1}\} }
 \EE_{(\tilde{{\color{black}x}},-1,0)} \big[ \hat{\tau}_1  \big] \PP_{({\color{black}x},1,0)} \big({\color{black}X}^\delta_{\tilde{\tau}_1}=\tilde{{\color{black}x}}\big) \\
 &\leq 
 C_{\tilde{\tau}} + \sup_{\tilde{{\color{black}x}} \in \{  {\color{black}x}_{k_\mus+1},\ldots,{\color{black}x}_N \}} \EE_{(\tilde{{\color{black}x}},1,0)} \big[ \hat{\tau}_1  \big] (1-C_p) ,
\end{align*}
hence $\sup_{{\color{black}x} \in \{  {\color{black}x}_{k_\mus+1},\ldots,{\color{black}x}_N \}} \EE_{({\color{black}x},1,0)} \big[ \hat{\tau}_1  \big] \leq C_{\tilde{\tau}}/C_p$.
\qed

{\it Step 4.  We have 
$$
C_{{\tau}} := \sup_{{\color{black}x} \in \{{\color{black}x}_{k_\mus+1},\ldots,{\color{black}x}_N\}} \EE_{({\color{black}x},1,0)} [ \tau_1 ] <+\infty.
$$
}
By symmetry we have $\EE_{({\color{black}x},-1,0)} [{\tau}_1 ] =\EE_{(-{\color{black}x},1,0)} [ {\tau}_1 ]$ for ${\color{black}x} \in \{{\color{black}x}_{-N},\ldots,{\color{black}x}_{-k_\mus-1}\}$, and therefore $ \sup_{ {\color{black}x} \in \{{\color{black}x}_{-N},\ldots,{\color{black}x}_{-k_\mus-1}\}} \EE_{({\color{black}x},-1,0)} [ {\tau}_1 ]=C_{{\tau}}$.\\
{\it Proof.}
Using the strong Markov property, we have for ${\color{black}x} \in \{{\color{black}x}_{k_\mus+1},\ldots,{\color{black}x}_N\}$:
\begin{align*}
\EE_{({\color{black}x},1,0)} [ \tau_1 ] = \EE_{({\color{black}x},1,0)} [ \hat{\tau}_1 ]  +  \EE_{({\color{black}x},1,0)} [ {\tau}_1 - \hat{\tau}_1 ] 
\leq C_{\hat{\tau}} + \sup_{\hat{{\color{black}x}} \in \{-{\color{black}x}_{k_\mus},\ldots,{\color{black}x}_{k_\mus}\} } \EE_{(\hat{{\color{black}x}},0,0)} [ \check{\tau}_1 ]  ,
\end{align*}
where $\check{\tau}_1 = \inf \{ t>0, \, {\color{black}X}_t^\delta \in \{{\color{black}x}_{-k_\mus-1},{\color{black}x}_{k_\mus+1}\}\}$. Since the process ${\color{black}X}^\delta_t$ is ergodic,
we have  $ \EE_{(\hat{{\color{black}x}},0,0)} [ \check{\tau}_1 ]<+\infty$ for all $\hat{{\color{black}x}} \in \{-{\color{black}x}_{k_\mus},\ldots,{\color{black}x}_{k_\mus}\} $.
\qed

\section{Proof of Proposition \ref{prop:32}}
\label{app:b}
We introduce an auxiliary Markov process $({\color{black}X}_t^\epsilon,{\color{black}Y}_t^\epsilon,{\color{black}V}_t^\epsilon)$ 
that depends on an additional time parameter $\epsilon>0$:
\begin{itemize}
\item
From $\mathfrak{s}_\pm'$  the process $({\color{black}X}_t^\epsilon,{\color{black}Y}_t^\epsilon,{\color{black}V}_t^\epsilon)$ moves  to $\mathfrak{s}_\pm=\pm ({\color{black}x}_{k_\mus +1},1,0)$ with probability one.
The exponential time of jump has mean $\epsilon \tau^2 \delta^2$.
\item
From $({\color{black}x}_{k_\mus},0,0)$  the process $({\color{black}X}_t^\epsilon,{\color{black}Y}_t^\epsilon,{\color{black}V}_t^\epsilon)$ moves  to $\mathfrak{s}_+'$
with probability  $\alpha({k_\mus})$
and to $({\color{black}x}_{k_\mus-1},0,0)$ with probability $1-\alpha({k_\mus})$.
The exponential time of jump has mean $\tau^2 \delta^2$.
\item
From $({\color{black}x}_{-k_\mus},0,0)$  the process $({\color{black}X}_t^\epsilon,{\color{black}Y}_t^\epsilon,{\color{black}V}_t^\epsilon)$ moves  to $\mathfrak{s}_-'$
with probability  $1-\alpha({-k_\mus})$
and to $({\color{black}x}_{-k_\mus+1},0,0)$ with probability $\alpha({-k_\mus})$.
The exponential time of jump has mean $\tau^2 \delta^2$.
\item
Otherwise the random dynamics of $({\color{black}X}_t^\epsilon,{\color{black}Y}_t^\epsilon,{\color{black}V}_t^\epsilon)$  is the one of $({\color{black}X}^\delta_t,{\color{black}Y}_t^\delta,{\color{black}V}^\delta_t)$.

\end{itemize}
In this context, the generator ${\cal L}^\epsilon$ of $({\color{black}X}_t^\epsilon,{\color{black}Y}^\epsilon_t,{\color{black}V}^\epsilon_t)$  is
$$
({\cal L}^\epsilon \varphi )(\bz) = 
({\cal L}^{\prime} \varphi )(\bz) \mbox{ for }\bz \in E,\quad \quad
({\cal L}^\epsilon \varphi )(\mathfrak{s}_\pm') = \frac{\varphi(\mathfrak{s}_\pm ) - \varphi(\mathfrak{s}_\pm')}{\epsilon \tau^2 \delta^2}.
$$
Given $f$ a bounded function, we consider the function 
$$
u_\lambda^\epsilon({\color{black}x},{\color{black}y},v;f) = \mathbb{E}_{({\color{black}x},{\color{black}y},v)} \Big[ \int_0^\infty e^{-\lambda s} f({\color{black}X}_s^\epsilon,{\color{black}Y}_s^\epsilon,{\color{black}V}_s^\epsilon) \textup{d} s \Big]
$$
which satisfies the equation
\begin{equation}
\label{plambdaepsilon}
\lambda u_\lambda^\epsilon - {\cal L}^\epsilon u_\lambda^\epsilon = f \: \mbox{ in } \: E \cup \{\mathfrak{s}_\pm'\} .
\end{equation}

We want to establish the representation formula \eqref{repformula}.
The function $f$ is arbitrary and can be decomposed as a sum of two functions: one symmetric $f_s = \frac{1}{2}(f+f \circ \gamma)$ and one antisymmetric $f_a = \frac{1}{2}(f-f \circ \gamma)$ where $\forall ({\color{black}x},{\color{black}y},v) \in E$, $\gamma({\color{black}x},{\color{black}y},v) = -({\color{black}x},{\color{black}y},v)$ and $\gamma(\mathfrak{s}_\pm')=\mathfrak{s}_\mp'$.
We first show that we have the representation formula
\begin{align*}
u_\lambda^\epsilon({\color{black}x},{\color{black}y},v;f) = w_\lambda({\color{black}x},{\color{black}y},v;f) - \nu_\lambda^\epsilon(f)w_\lambda({\color{black}x},{\color{black}y},v;1) 
+ \mu_\lambda^\epsilon(f) \big(({h}_\lambda^+ ({\color{black}x},{\color{black}y},v)- {h}_\lambda^-({\color{black}x},{\color{black}y},v)\big) + \frac{{\pi}_\lambda^\epsilon(f)}{\lambda}   ,
\end{align*}
where
\begin{align*}
{\pi}_\lambda^\epsilon(f) &= \frac{w_\lambda(\mathfrak{s}_+;f)+w_\lambda(\mathfrak{s}_-;f) + \epsilon \tau^2 \delta^2(f(\mathfrak{s}_+) + f(\mathfrak{s}_-)) }{2 w_\lambda(\mathfrak{s}_+;1) + 2 \epsilon \tau^2 \delta^2  
} ,
\\
\mu_\lambda^\epsilon(f) &= \frac{w_\lambda(\mathfrak{s}_+;f)- w_\lambda(\mathfrak{s}_-;f) + \epsilon \tau^2 \delta^2(f(\mathfrak{s}_+) - f(\mathfrak{s}_-))}{2 (1-{h}_\lambda^+(\mathfrak{s}_+) + {h}_\lambda^-(\mathfrak{s}_+) + \epsilon \tau^2 \delta^2 
)}.
\end{align*}
We split the proof into two parts.

{\it Step 1.
Assume $f$ is symmetric. We have
\begin{align}
{\pi}_\lambda^\epsilon(f) &=\frac{w_\lambda(\mathfrak{s}_+,f) + \epsilon \tau^2 \delta^2 f(\mathfrak{s}_+')}{w_\lambda(\mathfrak{s}_+,1) + \epsilon \tau^2 \delta^2 
} ,
\\
\label{step1-prop32}
u_\lambda^\epsilon({\color{black}x},{\color{black}y},v;f) 
&=
w_\lambda({\color{black}x},{\color{black}y},v;f) + {\pi}_\lambda^\epsilon(f) \Big( \frac{1}{\lambda} - w_\lambda({\color{black}x},{\color{black}y},v;1) \Big).
\end{align}
}
{\it Proof of step1.}
By linearity of the operator ${\cal L}^{\prime}$, 
it is clear that $w_\lambda(\cdot;f) + {\pi}_\lambda^\epsilon(f) \left ( \frac{1}{\lambda} - w_\lambda(\cdot;1) \right )$ satisfies \eqref{plambdaepsilon} in $E$.
Moreover, by definition of the constant ${\pi}_\lambda^\epsilon(f)$ and by linearity of the operator ${\cal L}^\epsilon$, the equation is also satisfied in $\{\mathfrak{s}_\pm'\}$.
Since $\lambda>0$, the solution of \eqref{plambdaepsilon} is unique and thus \eqref{step1-prop32} is shown.

{\it Step 2.
Assume $f$ is antisymmetric. We have
\begin{align*}
\mu_\lambda^\epsilon(f) &=
\frac{w_\lambda(\mathfrak{s}_+,f) + \epsilon \tau^2 \delta^2 f(\mathfrak{s}_+')}{1-{h}_\lambda^+(\mathfrak{s}_+)+{h}_\lambda^-(\mathfrak{s}_+) + \epsilon \tau^2 \delta^2 
}  , \\
u_\lambda^\epsilon({\color{black}x},{\color{black}y},v;f) 
&=
w_\lambda({\color{black}x},{\color{black}y},v;f) + \mu_\lambda^\epsilon(f) \big( {h}_\lambda^+ ({\color{black}x},{\color{black}y},v)- {h}_\lambda^-({\color{black}x},{\color{black}y},v) \big) .
\end{align*}
}
{\it Proof of step 2.}
The proof follows the same logic to what is done in step 1 except that we replace the function $w_\lambda(\cdot;f) + {\pi}_\lambda^\epsilon(f) \left ( \frac{1}{\lambda} - w_\lambda(\cdot;1) \right )$
by $w_\lambda(\cdot;f) + \mu_\lambda^\epsilon(f) \left ( {h}_\lambda^+ - {h}_\lambda^- \right )$ and the constant ${\pi}_\lambda^\epsilon(f)$ by $\mu_\lambda^\epsilon(f)$. 

{\it Step 3.}
To treat the general case of $f$, we collect what was done in the two previous steps:
\begin{align*}
u_\lambda^\epsilon(\cdot;f) & = u_\lambda^\epsilon(\cdot;f_s) + u_\lambda^\epsilon(\cdot;f_a)\\
& = w_\lambda(\cdot;f_s) + {\pi}_\lambda^\epsilon(f_s) \Big( \frac{1}{\lambda} - w_\lambda(\cdot;1) \Big) + w_\lambda(\cdot;f_a) + \mu_\lambda^\epsilon(f_a) \left ( {h}_\lambda^+ - {h}_\lambda^- \right ) \\
& = w_\lambda(\cdot;f) + {\pi}_\lambda^\epsilon(f) \Big( \frac{1}{\lambda} - w_\lambda(\cdot;1) \Big) + \mu_\lambda^\epsilon(f) 
\big( {h}_\lambda^+ - {h}_\lambda^- \big).
\end{align*}
We finally get the  representation formula (\ref{repformula}) from the fact that
$$
u_\lambda^\epsilon({\color{black}x},{\color{black}y},v;f) \to u_\lambda({\color{black}x},{\color{black}y},v;f) = \mathbb{E}_{({\color{black}x},{\color{black}y},v)}\Big[ \int_0^\infty e^{-\lambda s} f({\color{black}X}_s^\delta,{\color{black}Y}_s^\delta,{\color{black}V}_s^\delta) \textup{d} s\Big]  \: \mbox{as} \: \epsilon \to 0,
$$
and
$
\lim \limits_{\lambda \downarrow 0} \lambda u_\lambda(f) 
=  \lim \limits_{\lambda \downarrow 0} \lim \limits_{\epsilon \downarrow 0} \lambda u_\lambda^\epsilon(f)$.

{\color{black}
\section{Proof of the integrability of $t \mapsto \EE_{\pi} [ {\color{black}V}^\delta_0 {\color{black}V}^\delta_t]$}
\label{app:intpsi}%
We can decompose
\begin{align}
\EE_\pi [ {\color{black}V}^\delta_t  {\color{black}V}^\delta_0 ]
= \EE_\pi [ {\color{black}V}^\delta_t  {\color{black}V}^\delta_0 {\bf 1}_{t \leq \tau_1} ] + \EE_\pi [ {\color{black}V}^\delta_t  {\color{black}V}^\delta_0{\bf 1}_{t>\tau_1} ] .
\label{eq:decEVV}
\end{align}
The first term of the right-hand side is integrable since
$$
\big|\EE_\pi [ {\color{black}V}^\delta_t  {\color{black}V}^\delta_0 {\bf 1}_{t \leq \tau_1} ] \big| \leq v_{\rm max}^2 \PP_\pi(t \leq \tau_1) ,
$$
and $\int_0^\infty \PP_\pi(t \leq \tau_1)\textup{d}  t =\EE_\pi[\tau_1]<+\infty$.
Denoting $\psi(t) = \EE_{\mathfrak{s}_+}[ {\color{black}V}^\delta_t ]$ (which is such that $\EE_{\mathfrak{s}_-}[ {\color{black}V}^\delta_t ] = -\psi(t)$), and using the strong Markov property, the second term of the right-hand side of (\ref{eq:decEVV}) can be written as
\begin{align*}
 \EE_\pi [ {\color{black}V}^\delta_t  {\color{black}V}^\delta_0{\bf 1}_{t>\tau_1} ]
 =
\EE_\pi [ {\color{black}V}^\delta_0 
{\bf 1}_{t > \tau_1} \psi(t-\tau_1) \big( {\bf 1}_{\bZ_{\tau_1}=\mathfrak{s}_+}
-{\bf 1}_{\bZ_{\tau_1}=\mathfrak{s}_-}\big) ] .
\end{align*}
It is sufficient to show that $\psi$ is integrable in order to complete the proof because then
$$
\int_0^\infty \big|  \EE_\pi [ {\color{black}V}^\delta_t  {\color{black}V}^\delta_0{\bf 1}_{t>\tau_1} ] \big| \textup{d}  t \leq  v_{\rm max} \int_0^\infty |\psi(t) |\textup{d} t  ,
$$
so that $t \mapsto \EE_\pi [ {\color{black}V}^\delta_t  {\color{black}V}^\delta_0  ] $ is integrable by (\ref{eq:decEVV}).

We have, using again the strong Markov property
\begin{align*}
\psi(t) = \EE_{\mathfrak{s}_+}[ {\color{black}V}^\delta_t   {\bf 1}_{t \leq \tau_1}]+\EE_{\mathfrak{s}_+} \big[ \psi(t-\tau_1){\bf 1}_{t>\tau_1}  
 \big( {\bf 1}_{\bZ_{\tau_1}=\mathfrak{s}_+}-{\bf 1}_{\bZ_{\tau_1}=\mathfrak{s}_-}\big) \big] .
\end{align*}
We denote $\kappa(u) = \EE_{\mathfrak{s}_+}  [ {\bf 1}_{\bZ_{\tau_1}=\mathfrak{s}_+}-{\bf 1}_{\bZ_{\tau_1}=\mathfrak{s}_-} |\tau_1=u]$. It satisfies $|\kappa(u)|<1$ for all $u>0$ because $\PP_{\mathfrak{s}_+} ( \bZ_{\tau_1}=\mathfrak{s}_+ | \tau_1=u)\in (0,1)$ and we have
\begin{align*}
\psi(t) = \EE_{\mathfrak{s}_+}[ {\color{black}V}^\delta_t   {\bf 1}_{t \leq \tau_1}]+\EE_{\mathfrak{s}_+} \big[ \psi(t-\tau_1) {\bf 1}_{t>\tau_1} 
\kappa(\tau_1) \big] .
\end{align*}
For any $T>0$, 
\begin{align*}
\int_0^T |\psi(t)|\textup{d} t  &\leq \int_0^T  \EE_{\mathfrak{s}_+}[ |{\color{black}V}^\delta_t |  {\bf 1}_{t \leq \tau_1}] \textup{d}  t +\EE_{\mathfrak{s}_+} \Big[ \int_0^{(T-\tau_1)_+} |\psi(t)| \textup{d}  t  
|\kappa(\tau_1)| \Big]\\
&\leq v_{\rm max} \int_0^{+\infty} \PP_{\mathfrak{s}_+}( \tau_1\geq t) \textup{d} t +\EE_{\mathfrak{s}_+} [|\kappa(\tau_1)|] \int_0^T |\psi(t)|\textup{d} t  .
\end{align*}
Since $\EE_{\mathfrak{s}_+}[|\kappa(\tau_1)|] <1$ and $\int_0^{+\infty} \PP_{\mathfrak{s}_+}( \tau_1\geq t) \textup{d} t = \EE_{\mathfrak{s}_+}[\tau_1]<\infty$, 
this shows that $\psi$ is integrable:
 $$
\int_0^\infty |\psi(t)|\textup{d} t \leq \frac{ v_{\rm max}\EE_{\mathfrak{s}_+}[\tau_1]}{1-\EE_{\mathfrak{s}_+}[|\kappa(\tau_1)|] }.
$$
}

\end{document}